\pdfoutput=1
\documentclass[a4paper,11pt]{amsart}

\usepackage[margin=1in,includehead,includefoot]{geometry}
\usepackage[utf8]{inputenc}
\usepackage[T1]{fontenc}
\usepackage{lmodern,microtype,calc}
\usepackage{mathtools,amsthm,amssymb}
\usepackage{enumitem}
\usepackage{tikz}
\usetikzlibrary{decorations.pathmorphing}
\usepackage{hyperref,bookmark}

\hypersetup{colorlinks=true,linkcolor=black,citecolor=black,filecolor=black,urlcolor=black}

\makeatletter
\let\citationorig\citation
\def\citation#1{\citationorig{#1}\@for\@tempa:=#1\do{\@ifundefined{cit@\@tempa}{\global\@namedef{cit@\@tempa}{}}{}}}
\let\bibitemorig\bibitem
\def\bibitem#1{\@ifundefined{cit@#1}{\typeout{LaTeX Warning: Unused bibitem `#1'}}{}\bibitemorig{#1}}
\let\old@setaddresses\@setaddresses
\def\@setaddresses{\medskip{\parindent 0pt\let\scshape\relax\let\ttfamily\relax\old@setaddresses}}
\makeatother

\renewenvironment{itemize}{\begin{itemorig}[label=\textbullet, noitemsep, topsep=3pt plus 3pt, leftmargin=1.5em]}{\end{itemorig}}

\renewenvironment{enumerate}{\begin{enumorig}[label={\upshape(\arabic*)}, noitemsep, topsep=3pt plus 3pt, leftmargin=*]}{\end{enumorig}}
\newenvironment{enumeratea}{\begin{enumorig}[label={\upshape(A)}, noitemsep, topsep=3pt plus 3pt, leftmargin=*]}{\end{enumorig}}
\newenvironment{enumerateb}{\begin{enumorig}[label={\upshape(B)}, noitemsep, topsep=3pt plus 3pt, leftmargin=*]}{\end{enumorig}}
\newenvironment{enumerates}{\begin{enumorig}[label={\upshape(S\arabic*)}, noitemsep, topsep=3pt plus 3pt, leftmargin=*]}{\end{enumorig}}

\newtheorem{thmmain}{Theorem}
\newtheorem{theorem}{Theorem}[section]
\newtheorem{lemma}[theorem]{Lemma}
\newtheorem{corollary}[theorem]{Corollary}

\let\close\rhd
\let\leq\leqslant
\let\geq\geqslant
\let\setminus\smallsetminus
\let\Sigma\varSigma

\def\upset{\mathord{\uparrow}}
\def\downset{\mathord{\downarrow}}

\def\low#1{[#1]}

\DeclareMathOperator\Inc{Inc}
\DeclareMathOperator\id{id}

\def\setN{\mathbb{N}}

\def\calA{\mathcal{A}}
\def\calB{\mathcal{B}}
\def\calD{\mathcal{D}}
\def\calE{\mathcal{E}}
\def\calG{\mathcal{G}}
\def\calK{\mathcal{K}}
\def\calL{\mathcal{L}}
\def\calT{\mathcal{T}}
\def\calV{\mathcal{V}}

\linespread{1.077}
\linepenalty=200
\allowdisplaybreaks

\hypersetup{
  pdftitle={Minors and dimension},
  pdfauthor={Bartosz Walczak}
}

\title{Minors and dimension}

\author{Bartosz Walczak}

\address{Department of Theoretical Computer Science, Faculty of Mathematics and Computer Science, Jagiellonian University, Kraków, Poland}
\email{\href{mailto:walczak@tcs.uj.edu.pl}{walczak@tcs.uj.edu.pl}}

\thanks{A journal version of this paper appeared in \href{https://doi.org/10.1016/j.jctb.2016.09.001}{\emph{J.\ Comb.\ Theory Ser.~B} 122, 668--689, 2017}.}
\thanks{A conference version of this paper appeared in: \href{https://doi.org/10.1137/1.9781611973730.113}{Piotr Indyk (ed.), \emph{26th Annual ACM-SIAM Symposium on Discrete Algorithms (SODA 2015)}, pp.~1698--1707, SIAM, Philadelphia, 2015}.}
\thanks{The author was partially supported by National Science Center of Poland grant 2011/03/N/ST6/03111.}

\begin{document}

\begin{abstract}
It has been known for 30 years that posets with bounded height and with cover graphs of bounded maximum degree have bounded dimension.
Recently, Streib and Trotter proved that dimension is bounded for posets with bounded height and planar cover graphs, and Joret et~al.\ proved that dimension is bounded for posets with bounded height and with cover graphs of bounded tree-width.
In this paper, it is proved that posets of bounded height whose cover graphs exclude a fixed topological minor have bounded dimension.
This generalizes all the aforementioned results and verifies a conjecture of Joret et~al.
The proof relies on the Robertson-Seymour and Grohe-Marx graph structure theorems.
\end{abstract}

\maketitle

\section{Introduction}

In this paper, we are concerned with finite partially ordered sets, which we simply call \emph{posets}.
The \emph{dimension} of a poset $P$ is the minimum number of linear orders that form a \emph{realizer} of $P$, that is, their intersection gives rise to $P$.
The notion of dimension was introduced in 1941 by Dushnik and Miller \cite{DM41} and since then has been one of the most extensively studied parameters in the combinatorics of posets.
Much of this research has been focused on understanding when and why dimension is bounded, and this is also the focus of the current paper.
The monograph \cite{Tro-book} contains a comprehensive introduction to poset dimension theory.

To some extent, dimension for posets behaves like chromatic number for graphs.
There is a natural construction of a poset with dimension $d$, the \emph{standard example} $S_d$ (see Figure \ref{fig:examples}), which plays a similar role to the complete graph $K_d$ in the graph setting.
Every poset that contains $S_d$ as a subposet must have dimension at least $d$.
On the other hand, there are posets of arbitrarily large dimension not containing $S_3$ as a subposet, just as there are triangle-free graphs with arbitrarily large chromatic number.
Moreover, it is NP-complete to decide whether a poset has dimension at most $d$ for any $d\geq 3$ \cite{Yan82}, just as it is for the chromatic number.

These similarities motivated research on how the dimension of a poset depends on its ``graphic structure''.
There are two natural ways of deriving a graph from a poset: the \emph{comparability graph} connects any two comparable elements, while the \emph{cover graph} connects any two elements that are comparable and whose comparability is not implied by other comparabilities and by transitivity of the order.
It is customary to include only the cover graph edges in drawings of posets and to describe the ``topology'' of a poset in terms of its cover graph rather than its comparability graph.
This choice has a clear advantage: posets of large height still can have sparse ``topology''.

The above-mentioned analogy of poset dimension to graph chromatic number suggests that posets with sparse ``topology'' should have small dimension.
In this vein, Trotter and Moore \cite{TM77} showed that posets whose cover graphs are trees have dimension at most $3$.
How about planarity---can we expect a property of posets similar to the famous four-color theorem?
There is no strict analogy: Trotter \cite{Tro78} and Kelly \cite{Kel81} constructed posets with planar cover graphs that still contain arbitrarily large standard examples as subposets and thus have arbitrarily large dimension (see Figure \ref{fig:examples}).
Actually, Kelly's construction gives planar posets (a \emph{planar poset} admits a drawing such that the vertical placement of points agrees with their poset order).
These constructions led researchers to abandon the study of connections between the dimension of a poset and the structure of its cover graph for decades.

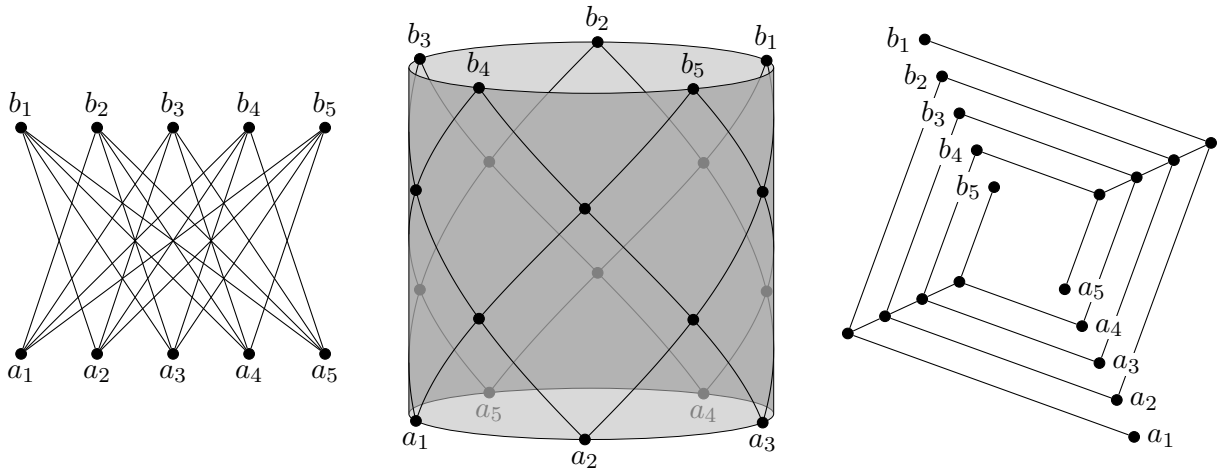
\begin{figure}[t]
\centering
\begin{tikzpicture}[xscale=1,yscale=3,baseline=(current bounding box.center)]
\tikzstyle{every node}=[circle,draw,fill,minimum size=4pt,inner sep=0pt]
\tikzstyle{every label}=[rectangle,draw=none,fill=none,inner sep=0pt,label distance=2pt]
\node[label=below:$a_1$] (a1) at (0,0) {};
\node[label=below:$a_2$] (a2) at (1,0) {};
\node[label=below:$a_3$] (a3) at (2,0) {};
\node[label=below:$a_4$] (a4) at (3,0) {};
\node[label=below:$a_5$] (a5) at (4,0) {};
\node[label=above:$b_1$] (b1) at (0,1) {};
\node[label=above:$b_2$] (b2) at (1,1) {};
\node[label=above:$b_3$] (b3) at (2,1) {};
\node[label=above:$b_4$] (b4) at (3,1) {};
\node[label=above:$b_5$] (b5) at (4,1) {};
\path (a1) edge (b2) edge (b3) edge (b4) edge (b5);
\path (a2) edge (b1) edge (b3) edge (b4) edge (b5);
\path (a3) edge (b1) edge (b2) edge (b4) edge (b5);
\path (a4) edge (b1) edge (b2) edge (b3) edge (b5);
\path (a5) edge (b1) edge (b2) edge (b3) edge (b4);
\end{tikzpicture}\hfill
\begin{tikzpicture}[xscale=2.4,yscale=0.85,baseline=(current bounding box.center)]
\tikzstyle{every node}=[circle,draw,fill,minimum size=4pt,inner sep=0pt]
\draw[smooth,domain=-2:106] plot ({sin(\x)}, {0.05*(\x+2)-0.4*cos(\x)});
\draw[smooth,domain=70:178] plot ({sin(\x)}, {0.05*(\x-70)-0.4*cos(\x)});
\draw[smooth,domain=214:322] plot ({sin(\x)}, {0.05*(\x-214)-0.4*cos(\x)});
\draw[smooth,domain=2:110] plot ({-sin(\x)}, {0.05*(\x-2)-0.4*cos(\x)});
\draw[smooth,domain=74:182] plot ({-sin(\x)}, {0.05*(\x-74)-0.4*cos(\x)});
\draw[smooth,domain=218:326] plot ({-sin(\x)}, {0.05*(\x-218)-0.4*cos(\x)});
\fill[black!15] (0,0) ellipse (1cm and 0.4cm);
\fill[black!15] (0,5.4) ellipse (1cm and 0.4cm);
\fill[black!30] (-1,0) arc (180:0:1cm and 0.4cm)--(1,5.4) arc (0:-180:1cm and 0.4cm)--cycle;
\begin{scope}
\clip (-1,0) arc (180:0:1cm and 0.4cm)--(1,5.4) arc (0:-180:1cm and 0.4cm);
\draw[smooth,black!50,domain=90:106] plot ({sin(\x)}, {0.05*(\x+2)-0.4*cos(\x)});
\draw[smooth,black!50,domain=90:178] plot ({sin(\x)}, {0.05*(\x-70)-0.4*cos(\x)});
\draw[smooth,black!50,domain=142:250] plot ({sin(\x)}, {0.05*(\x-142)-0.4*cos(\x)});
\draw[smooth,black!50,domain=214:270] plot ({sin(\x)}, {0.05*(\x-214)-0.4*cos(\x)});
\draw[smooth,black!50,domain=90:110] plot ({-sin(\x)}, {0.05*(\x-2)-0.4*cos(\x)});
\draw[smooth,black!50,domain=90:182] plot ({-sin(\x)}, {0.05*(\x-74)-0.4*cos(\x)});
\draw[smooth,black!50,domain=146:254] plot ({-sin(\x)}, {0.05*(\x-146)-0.4*cos(\x)});
\draw[smooth,black!50,domain=218:270] plot ({-sin(\x)}, {0.05*(\x-218)-0.4*cos(\x)});
\node[black!50] at ({sin(106)}, {1.8-0.4*cos(106)}) {};
\node[black!50] at ({sin(178)}, {1.8-0.4*cos(178)}) {};
\node[black!50] at ({sin(250)}, {1.8-0.4*cos(250)}) {};
\node[black!50] at ({sin(142)}, {3.6-0.4*cos(142)}) {};
\node[black!50] at ({sin(214)}, {3.6-0.4*cos(214)}) {};
\end{scope}
\begin{scope}
\clip (0,5.4) ellipse (1cm and 0.4cm);
\draw[smooth,domain=90:106] plot ({sin(\x)}, {0.05*(\x+2)-0.4*cos(\x)});
\draw[smooth,domain=90:178] plot ({sin(\x)}, {0.05*(\x-70)-0.4*cos(\x)});
\draw[smooth,domain=142:250] plot ({sin(\x)}, {0.05*(\x-142)-0.4*cos(\x)});
\draw[smooth,domain=214:270] plot ({sin(\x)}, {0.05*(\x-214)-0.4*cos(\x)});
\draw[smooth,domain=90:110] plot ({-sin(\x)}, {0.05*(\x-2)-0.4*cos(\x)});
\draw[smooth,domain=90:182] plot ({-sin(\x)}, {0.05*(\x-74)-0.4*cos(\x)});
\draw[smooth,domain=146:254] plot ({-sin(\x)}, {0.05*(\x-146)-0.4*cos(\x)});
\draw[smooth,domain=218:270] plot ({-sin(\x)}, {0.05*(\x-218)-0.4*cos(\x)});
\end{scope}
\begin{scope}
\clip (-1,0) arc (-180:0:1cm and 0.4cm)--(1,5.4) arc (0:-180:1cm and 0.4cm);
\draw[very thin,black!60] (-1,0) arc (180:0:1cm and 0.4cm);
\draw[smooth,domain=-74:34] plot ({sin(\x)}, {0.05*(\x+74)-0.4*cos(\x)});
\draw[smooth,domain=-2:90] plot ({sin(\x)}, {0.05*(\x+2)-0.4*cos(\x)});
\draw[smooth,domain=70:90] plot ({sin(\x)}, {0.05*(\x-70)-0.4*cos(\x)});
\draw[smooth,domain=270:322] plot ({sin(\x)}, {0.05*(\x-214)-0.4*cos(\x)});
\draw[smooth,domain=-70:38] plot ({-sin(\x)}, {0.05*(\x+70)-0.4*cos(\x)});
\draw[smooth,domain=2:90] plot ({-sin(\x)}, {0.05*(\x-2)-0.4*cos(\x)});
\draw[smooth,domain=74:90] plot ({-sin(\x)}, {0.05*(\x-74)-0.4*cos(\x)});
\draw[smooth,domain=270:326] plot ({-sin(\x)}, {0.05*(\x-218)-0.4*cos(\x)});
\end{scope}
\node at ({sin(-38)}, {1.8-0.4*cos(-38)}) {};
\node at ({sin(34)}, {1.8-0.4*cos(34)}) {};
\node at ({sin(-2)}, {3.6-0.4*cos(-2)}) {};
\node at ({sin(70)}, {3.6-0.4*cos(70)}) {};
\node at ({sin(286)}, {3.6-0.4*cos(286)}) {};
\draw[very thin] (-1,5.4)--(-1,0) arc (-180:0:1cm and 0.4cm)--(1,5.4);
\draw[very thin] (0,5.4) ellipse (1cm and 0.4cm);
\tikzstyle{every label}=[rectangle,draw=none,fill=none,inner sep=0pt,label distance=2pt]
\node[label=below:$a_1$] at ({sin(-74)}, {-0.4*cos(-74)}) {};
\node[label=below:$a_2$] at ({sin(-2)}, {-0.4*cos(-2)}) {};
\node[label=below:$a_3$] at ({sin(70)}, {-0.4*cos(70)}) {};
\node[black!50,label={[black!50]below:$a_4$}] at ({sin(142)}, {-0.4*cos(142)}) {};
\node[black!50,label={[black!50]below:$a_5$}] at ({sin(214)}, {-0.4*cos(214)}) {};
\node[label=above:$b_5$] at ({sin(34)}, {5.4-0.4*cos(34)}) {};
\node[label=above:$b_1$] at ({sin(106)}, {5.4-0.4*cos(106)}) {};
\node[label=above:$b_2$] at ({sin(178)}, {5.4-0.4*cos(178)}) {};
\node[label=above:$b_3$] at ({sin(250)}, {5.4-0.4*cos(250)}) {};
\node[label=above:$b_4$] at ({sin(322)}, {5.4-0.4*cos(322)}) {};
\end{tikzpicture}\hfill
\begin{tikzpicture}[scale=.54,rotate=25,baseline=(current bounding box.center)]
\tikzstyle{every node}=[circle,draw,fill,minimum size=4pt,inner sep=0pt]
\tikzstyle{every label}=[rectangle,draw=none,fill=white,inner sep=1pt,label distance=1.5pt]
\node (z1) at (-5,-0.25) {};
\node (z2) at (-4,-0.25) {};
\node (z3) at (-3,-0.25) {};
\node (z4) at (-2,-0.25) {};
\node (w1) at (5,0.25) {};
\node (w2) at (4,0.25) {};
\node (w3) at (3,0.25) {};
\node (w4) at (2,0.25) {};
\node[label={[yshift=-0.5pt]right:$a_1$}] (a1) at (0.25,-5.5) {};
\node[label=left:$b_1$] (b1) at (-0.25,5.5) {};
\node[label={[yshift=-0.5pt]right:$a_2$}] (a2) at (0.25,-4.5) {};
\node[label=left:$b_2$] (b2) at (-0.25,4.5) {};
\path (z1) edge (a1) edge (b2) edge (z2);
\path (w1) edge (b1) edge (a2) edge (w2);
\node[label={[yshift=-0.5pt]right:$a_3$}] (a3) at (0.25,-3.5) {};
\node[label=left:$b_3$] (b3) at (-0.25,3.5) {};
\path (z2) edge (a2) edge (b3) edge (z3);
\path (w2) edge (b2) edge (a3) edge (w3);
\node[label={[yshift=-0.5pt]right:$a_4$}] (a4) at (0.25,-2.5) {};
\node[label=left:$b_4$] (b4) at (-0.25,2.5) {};
\path (z3) edge (a3) edge (b4) edge (z4);
\path (w3) edge (b3) edge (a4) edge (w4);
\node[label={[yshift=-0.5pt]right:$a_5$}] (a5) at (0.25,-1.5) {};
\node[label=left:$b_5$] (b5) at (-0.25,1.5) {};
\path (z4) edge (a4) edge (b5);
\path (w4) edge (b4) edge (a5);
\end{tikzpicture}
\caption{From left to right: standard example $S_5$ ($a_i<b_j$ if and only if $i\neq j$); Trotter's poset with planar cover graph containing $S_5$ as a subposet; Kelly's planar poset containing $S_5$ as a subposet}
\label{fig:examples}
\end{figure}

The topic has regained interest in recent years.
Having noted that the height of the posets produced by the above-mentioned constructions grows with the dimension, Felsner, Li, and Trotter \cite{FLT10} conjectured that posets of bounded height with planar cover graphs have bounded dimension.\footnote{The conjecture is not stated explicitly in \cite{FLT10}, but it is a natural consequence of the results proved therein.}
This was verified in breakthrough work of Streib and Trotter \cite{ST14}.
Then, Joret et~al.\ \cite{JMM+16} proved that dimension is bounded for posets of bounded height whose cover graphs have bounded tree-width.
It became apparent that the connection between poset dimension theory and structural graph theory is much deeper than it had been thought before.
The condition that the height is bounded cannot be removed from the latter result, as the posets in Kelly's construction have tree-width $3$.
However, Biró, Keller, and Young \cite{BKY16} proved that posets with cover graphs of path-width $2$ (and with no restriction on the height) have bounded dimension.
Very recently, this was further generalized by Joret et~al.\ \cite{JMT+17} to posets with cover graphs of tree-width $2$.
Some other results bounding the dimension of posets whose cover graphs have a specific structure are discussed in the introductory sections of \cite{JMM+16}.

Joret et~al.\ \cite{JMM+16} conjectured that posets of bounded height whose cover graphs exclude a fixed graph as a minor have bounded dimension.
This is verified and further generalized to excluded topological minors in the present paper.

\begin{thmmain}
\label{thm:main}
Posets of bounded height whose cover graphs exclude a fixed graph as a topological minor have bounded dimension.
\end{thmmain}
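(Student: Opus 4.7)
The plan is to invoke the Grohe--Marx graph structure theorem for topological-minor-free graphs and reduce the main statement to two base cases---graphs almost embeddable in a surface of bounded Euler genus (with a bounded apex set) and graphs of almost-bounded maximum degree (i.e., bounded degree outside a bounded apex set)---in which strengthenings of known bounded-height dimension results must be proved, then glue the resulting local bounds through the tree-decomposition along the lines of Joret et al.

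Concretely, fix the height bound $h$ and the forbidden topological minor $H$, and let $P$ be a poset of height at most $h$ with cover graph $G$ excluding $H$ topologically. Applying Grohe--Marx to $G$ yields a tree-decomposition of bounded adhesion each of whose torsos is of one of the two types above. The height hypothesis must be threaded through every step: Kelly's posets have cover graphs of tree-width $3$ yet achieve arbitrary dimension, so the gluing cannot be purely graph-structural---it must exploit that a long enough ``alternating walk'' in $P$ produces a large standard example only under the bounded-height restriction.

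I would first handle the two base cases. The bounded-maximum-degree case is essentially the F\"uredi--Kahn / Erd\H{o}s--Kierstead--Trotter theorem, and a bounded apex set contributes only an additive constant to the dimension. The bounded-genus case requires extending Streib and Trotter's planar theorem to surfaces of positive genus with a bounded apex set; the natural approach is to induct on the pair (genus, number of apices), cutting along non-separating curves and removing apex vertices one at a time to descend to the planar case already handled. With local bounds in hand, the gluing follows the Joret et al.\ blueprint: a critical pair $(x,y)$ witnessing the full dimension is traced along the decomposition tree, and one shows that either the pair is essentially confined to a single torso (in which case the local bound controls it) or the witnessing $x$--$y$ path crosses many adhesion sets, producing a long alternating structure that, together with the bounded height, forces a standard example larger than the assumed dimension.

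The main obstacle I expect is the bounded-genus almost-embeddable base case together with its compatibility with the gluing step. Streib and Trotter's unfolding exploits planarity in a highly combinatorial way through face-boundary cycles, and lifting this to higher Euler genus and apex vertices demands re-engineering the inductive scheme while keeping all estimates uniform across the adhesion sets at which torsos are glued. In particular, making sure that apex vertices or non-contractible boundary cycles of one torso do not, after gluing, contribute uncontrollably to dimension through distant adhesions is where the heaviest technical work will live; everything else---the degree case, the gluing lemma, and the critical-pair tracking---should follow the now-standard bounded-height toolbox.
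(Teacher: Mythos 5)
Your top-level plan matches the paper: invoke Grohe--Marx, settle two base cases, then glue across the tree decomposition. But two steps diverge, one of them fatally.

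On the base cases, the bounded-degree case you dispose of correctly (F\"uredi--Kahn plus the apex lemma), but the ``almost embeddable in bounded genus'' case you propose to attack head-on by re-engineering Streib--Trotter's planar argument with an induction on (genus, apices), cutting along non-contractible curves. That is an unnecessarily hard road. The paper sidesteps surfaces entirely: Grohe's theorem already says that almost-embeddable graphs \emph{and all their minors} have bounded local tree-width, so it suffices to handle the class $\calL_f$. One then combines Joret et al.'s bounded-tree-width theorem (Theorem~\ref{thm:tree-width}) with a generalization of the Streib--Trotter ``reduce to bounded radius'' step (Lemma~\ref{lem:diameter}): a graph in a minor-closed class of bounded local tree-width with radius at most $2h-2$ has bounded tree-width, and the radius reduction costs only a bounded multiplicative factor in dimension. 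No topology is touched. This is both simpler and more robust than your proposed surface surgery.

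The genuine gap is in the gluing step. You write that when the critical pair is ``confined to a single torso, the local bound controls it.'' But the naive version of this claim---that if every $P[X]$ for $X$ a bag has dimension at most $d$, and adhesion and height are bounded, then $\dim(P)$ is bounded---is \emph{false}, and the paper exhibits a counterexample with $h=s=d=2$ (the poset of all $1$- and $2$-element subsets of $\{1,\dots,n\}$ ordered by inclusion, whose cover graph has an adhesion-$2$ star decomposition with all induced subposets of dimension $2$, while $\dim(P)\to\infty$). The missing idea is the poset analogue of a torso: one must augment each bag's subposet $P[X]$ with a ``gadget'' of bounded size attached to every adhesion set, designed so that alternating cycles of $P$ passing outside $X$ are simulated by alternating cycles inside the augmented poset. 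The gluing lemma (Lemma~\ref{lem:main}) then reads: if the \emph{gadget extensions} $P_Z,P'_Z$ of the bags have bounded dimension, and adhesion and height are bounded, then $\dim(P)$ is bounded. One must further verify (Lemmas~\ref{lem:gadget-ltw}--\ref{lem:gadget-degree}) that adding these gadgets keeps the cover graph inside $\calA_t(\calL_{f'}\cup\calD_{d'})$, so that the base cases still apply to the gadget extensions. Without gadget extensions your ``critical-pair tracing'' argument has no local quantity to bootstrap from, and the counterexample shows the plan as stated does not go through.
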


It follows from Theorem \ref{thm:main} that posets whose comparability graphs exclude a fixed graph as a topological minor have bounded dimension.
Indeed, if the comparability graph of a poset $P$ excludes a graph $H$ as a topological minor, then so does the cover graph of $P$ and the height of $P$ is less than the number of vertices of $H$.
This generalizes an old result that posets with comparability graphs of bounded maximum degree have bounded dimension \cite{FK86,RT-unpub}.

The proof of Theorem \ref{thm:main} relies on structural decomposition theorems for graphs excluding a fixed (topological) minor due to Robertson and Seymour \cite{RS03} and Grohe and Marx \cite{GM15}.
The heart of the proof lies in carrying the bounds on dimension through tree decomposition.
The key concept thereof is a poset analogue of a torso---a construction that extends the subposets induced on the bags of the tree decomposition by gadgets of bounded size whose purpose is to imitate the interaction of these subposets with the rest of the poset.
The function bounding the dimension in terms of the height and the excluded topological minor that results from the proof is enormous, and no effort has been made to compute its precise order of magnitude.

After a preliminary version of this paper had been published, Micek and Wiechert \cite{MW17} came up with an alternative proof of Theorem \ref{thm:main}, which avoids the use of graph structure theorems.
Subsequently, Joret, Micek, and Wiechert \cite{JMW18} proved a generalization of Theorem \ref{thm:main} for posets whose cover graphs belong to any class of graphs with bounded expansion.

To bound the height of a poset $P$ is to exclude a long chain as a subposet of $P$.
It is natural to ask what other posets can be excluded instead of a chain in the assumptions of Theorem \ref{thm:main} so that its conclusion remains valid.
An obvious candidate is the standard example $S_d$.
Gutowski and Krawczyk \cite{GK-personal} suggested another candidate---the poset formed by two incomparable chains of size $k$, denoted by $k+k$.
This is motivated by recent results showing that some problems that are ``hard'' for general posets become ``tractable'' for $(k+k)$-free posets.
For instance, any on-line algorithm trying to build a realizer of a poset of width $w$ can be forced to use arbitrarily many linear extensions even when $w=3$ \cite{KMT84}, although posets of width $w$ have dimension at most $w$ \cite{Hir55}.
On the other hand, for $(k+k)$-free posets of width $w$, Felsner, Krawczyk, and Trotter \cite{FKT13} devised an on-line algorithm that builds a realizer of size bounded in terms of $k$ and $w$.
Whether excluding $S_d$ or $k+k$ instead of bounding the height in Theorem \ref{thm:main} or its predecessors keeps the dimension bounded remains a challenging open problem.

\section{Preliminaries}

\subsection{Graph terminology and notation}

We let $V(G)$ and $E(G)$ denote the sets of vertices and edges of a graph $G$.
The subgraph of $G$ induced on a set $X\subseteq V(G)$ is denoted by $G[X]$.

A graph $H$ is a \emph{minor} of a graph $G$ if $H$ can be obtained from $G$ by deleting vertices, deleting edges, and contracting edges, where to contract an edge $uv$ means to replace $u$ and $v$ by a single vertex that becomes connected to all neighbors of $u$ or $v$.
A graph $H$ is a \emph{topological minor} of $G$ if $H$ can be obtained from $G$ by deleting vertices, deleting edges, and contracting edges with at least one endpoint of degree $2$.
A class of graphs $\calG$ is
\begin{itemize}
\item\emph{minor-closed} if every minor of every graph in $\calG$ belongs to $\calG$,
\item\emph{topologically closed} if every topological minor of every graph in $\calG$ belongs to $\calG$,
\item\emph{monotone} if every subgraph of every graph in $\calG$ belongs to $\calG$,
\item\emph{proper} if $\calG$ does not contain all graphs.
\end{itemize}
Every minor-closed class is topologically closed, and every topologically closed class is monotone.
For every graph $H$, the class of graphs excluding $H$ as a minor or a topological minor is proper minor-closed or proper topologically closed, respectively.

A \emph{tree decomposition} of a graph $G$ is a tree $T$ with a mapping $V(T)\ni t\mapsto B_t\subseteq V(G)$ of the nodes of $T$ into subsets of $V(G)$ that satisfies the following two properties:
\begin{itemize}
\item every edge of $G$ is contained in $G[B_t]$ for at least one node $t$ of $T$,
\item for every vertex $v$ of $G$, the set $\{t\in V(T)\colon v\in B_t\}$ forms a non-empty subtree of $T$.
\end{itemize}
The sets $B_t$ are called the \emph{bags} of the tree decomposition.
Since the mapping $t\mapsto B_t$ can assign the same subset of $V(G)$ to more than one node of $T$, we will assume that every bag of $T$ carries the identity of a particular node of $T$ to which it is assigned.
With this in mind, we will identify the nodes of $T$ with the bags of $T$, and we will simply call $T$ a tree decomposition of $G$.

An \emph{adhesion set} of a bag $X$ of $T$ is a set of the form $X\cap Y$ with $XY\in E(T)$.
The \emph{adhesion} of $T$ is the maximum size of an adhesion set of a bag of $T$.
The \emph{torso} of a bag $X$ of $T$ is the graph obtained from $G[X]$ by adding edges between all pairs of vertices in every adhesion set of $X$.

The \emph{tree-width} of a graph $G$ is the minimum number $k$ such that $G$ has a tree decomposition with every bag of size at most $k+1$.
The \emph{radius} of a graph $G$ is the minimum number $r$ such that $G$ has a vertex whose distance from every vertex of $G$ is at most $r$ if $G$ is connected, or it is $\infty$ if $G$ is disconnected.
The \emph{local tree-width} of a graph $G$ is the function $f\colon\setN\to\setN$ such that $f(r)$ is the maximum tree-width of an induced subgraph of $G$ with radius at most $r$.
Here and further on, $\setN$ denotes the set of positive integers.

For a function $f\colon\setN\to\setN$, let $\calL_f$ denote the class of graphs whose all minors have local tree-width bounded by $f$.
For $d\in\setN$, let $\calD_d$ denote the class of graphs with maximum degree at most $d$.
For a class of graphs $\calG$ and for $t\in\setN$, let $\calA_t(\calG)$ denote the class of graphs $G$ such that there is a set $A\subseteq V(G)$ with $|A|\leq t$ and $G\setminus A\in\calG$.
We call a vertex an \emph{apex} to indicate that it can be connected to arbitrary other vertices of the graph.
Hence $\calA_t(\calG)$ is the class of graphs obtained from graphs in $\calG$ by adding at most $t$ apices.
For a class of graphs $\calG$ and for $s\in\setN$, let $\calT_s(\calG)$ denote the class of graphs $G$ that have a tree decomposition $T$ of adhesion at most $s$ such that every torso of $T$ belongs to $\calG$.
The following facts about classes of graphs are straightforward consequences of these definitions:
\begin{itemize}
\item for every $f\colon\setN\to\setN$, the class $\calL_f$ is minor-closed;
\item for every $d\in\setN$, the class $\calD_d$ is topologically closed;
\item for every $t\in\setN$, if $\calG$ is monotone/topologically closed/minor-closed, then $\calA_t(\calG)$ is monotone/topologically closed/minor-closed, respectively;
\item for every $s\in\setN$, if $\calG$ is monotone/topologically closed/minor-closed, then $\calT_s(\calG)$ is monotone/topologically closed/minor-closed, respectively.
\end{itemize}

\subsection{Graph structure theorems}

The classical results of Kuratowski \cite{Kur30} and Wagner \cite{Wag37} assert that the class of planar graphs is characterized by excluding $K_5$ and $K_{3,3}$ as (topological) minors.
For $g\geq 1$, the class of graphs with genus at most $g$ is minor-closed as well, but its complete list of excluded minors is unknown.
Robertson and Seymour, in a monumental series of papers culminating in \cite{RS04}, proved that the list of minimal excluded minors is finite for every minor-closed class of graphs.
One of the key results of this series is a structural decomposition theorem for proper minor-closed classes of graphs $\calG$ \cite{RS03}: every graph in $\calG$ admits a tree decomposition every torso of which is \emph{almost embeddable} in a surface of bounded genus with the exception of a bounded number of apices (the precise definition of \emph{almost embeddable} is not important for this paper).
This is an \emph{approximate} structural characterization of proper minor-closed classes---the class of graphs satisfying the conclusion of the decomposition theorem is also proper minor-closed, although usually much broader than $\calG$.

Grohe \cite{Gro03} proved that graphs almost embeddable in a surface of bounded genus and all minors of these graphs have bounded local tree-width.
This generalizes earlier results of Baker \cite{Bak94} for planar graphs and of Eppstein \cite{Epp00} for graphs of bounded genus.
This also yields an approximate characterization of proper minor-closed classes of graphs that does not involve topology.

\begin{theorem}[Robertson, Seymour \cite{RS03}, Grohe \cite{Gro03}]
\label{thm:grohe}
For every proper minor-closed class of graphs\/ $\calG$, there are\/ $s,t\in\setN$ and\/ $f\colon\setN\to\setN$ such that\/ $\calG\subseteq\calT_s(\calA_t(\calL_f))$.
\end{theorem}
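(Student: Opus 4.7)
The plan is to combine the two cited ingredients---the Robertson-Seymour structure theorem for proper minor-closed classes and Grohe's local tree-width bound for graphs almost embeddable in a surface of bounded genus---and verify that the parameters line up with the class operators $\calT_s$, $\calA_t$ and $\calL_f$ introduced in the previous subsection. Neither of the two deep ingredients is to be reproved; the task is essentially one of bookkeeping.

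First I would invoke the Robertson-Seymour structure theorem applied to $\calG$. This produces constants $s,t,g\in\setN$ (depending on $\calG$) such that every $G\in\calG$ admits a tree decomposition of adhesion at most $s$ in which every torso can be obtained from a graph almost embeddable in a surface of genus at most $g$ by adjoining at most $t$ apex vertices. Writing $\calE_g$ for the class of graphs almost embeddable in such a surface, this means precisely that every torso belongs to $\calA_t(\calE_g)$, and hence by definition $\calG\subset\calT_s(\calA_t(\calE_g))$.

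Next I would invoke Grohe's theorem, which states that there is a function $f\colon\setN\to\setN$ such that every minor of every graph in $\calE_g$ has local tree-width bounded by $f$. This is exactly the inclusion $\calE_g\subset\calL_f$. Substituting into the inclusion from the previous paragraph yields $\calG\subset\calT_s(\calA_t(\calL_f))$, as required.

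The only subtle point is matching the notion of ``apex vertex'' used in the statement of the Robertson-Seymour theorem with the operator $\calA_t$ defined above: a torso is described there as an almost-embeddable graph to which up to $t$ vertices of arbitrary adjacency have been added, equivalently a graph from which up to $t$ vertices can be deleted to leave a graph in $\calE_g$. Both descriptions agree with $\calA_t(\calE_g)$ as defined here, so the translation is purely formal. The main work of the theorem is thus entirely absorbed into the two cited results, and I expect no genuine obstacle in the composition itself.
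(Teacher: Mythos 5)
Your proposal matches the paper's reasoning exactly: the paper also treats this theorem as the immediate combination of the Robertson--Seymour structure theorem (yielding tree decompositions of bounded adhesion whose torsos are, up to a bounded number of apices, almost embeddable in a surface of bounded genus) with Grohe's result that almost-embeddable graphs and all their minors have bounded local tree-width, i.e.\ $\calE_g\subset\calL_f$. The paper gives no separate formal proof beyond the surrounding discussion, which is precisely the bookkeeping you carry out.
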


Topologically closed classes of graphs can be substantially richer than minor-closed ones.
In particular, for $d\geq 3$, the class $\calD_d$ of graphs with maximum degree at most $d$ is topologically closed, but every graph is a minor of some graph in $\calD_d$.
Grohe and Marx \cite{GM15} showed that incorporating graphs with bounded maximum degree to the structural decomposition considered above is enough to obtain an approximate characterization of all topologically closed classes of graphs.
Specifically, for any proper topologically closed class $\calG$, they proved that every graph in $\calG$ admits a tree decomposition whose every torso (a) belongs to a fixed proper minor-closed class, or (b) has bounded maximum degree except for a bounded number of apices.
This altogether yields the following approximate characterization of proper topologically closed classes.

\begin{theorem}[Robertson, Seymour \cite{RS03}, Grohe \cite{Gro03}, Grohe, Marx \cite{GM15}]
\label{thm:grohe-marx}
For every proper topologically closed class of graphs\/ $\calG$, there are\/ $s,t,d\in\setN$ and\/ $f\colon\setN\to\setN$ such that\/ $\calG\subseteq\calT_s(\calA_t(\calL_f\cup\calD_d))$.
\end{theorem}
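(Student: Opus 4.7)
The plan is to obtain Theorem~\ref{thm:grohe-marx} by composing two prior results: the Grohe--Marx structural decomposition for proper topologically closed classes, and the Robertson--Seymour--Grohe decomposition for proper minor-closed classes, i.e.\ Theorem~\ref{thm:grohe}. First I would apply the Grohe--Marx theorem to the given proper topologically closed class $\calG$, obtaining integers $s_1,t_1,d\in\setN$ and a proper minor-closed class $\calG'$ such that every graph in $\calG$ admits a tree decomposition of adhesion at most $s_1$ whose every torso lies in $\calA_{t_1}(\calG'\cup\calD_d)$. Next I would apply Theorem~\ref{thm:grohe} to $\calG'$ to get integers $s_2,t_2$ and a function $f\colon\setN\to\setN$ with $\calG'\subset\calT_{s_2}(\calA_{t_2}(\calL_f))$. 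The remaining task is to collapse the two tiers into a single tree decomposition of the required form.

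The key mechanism is a composition lemma for tree decompositions: if $G$ has a tree decomposition $T$ of adhesion $\leq s$ and some torso $H$ of $T$ has its own tree decomposition $T_H$ of adhesion $\leq s'$ with torsos in a class $\calX$, then $T_H$ can be substituted for $H$ inside $T$ to give a tree decomposition of $G$ of adhesion $\leq\max(s,s')$ whose torsos from that region lie in $\calX$. The substitution works because every adhesion set of $T$ incident to $H$ is a clique in $H$ (by definition of torso), and by the classical Helly-type property of tree decompositions each such clique is contained in a single bag of $T_H$; gluing $T_H$ to the rest of $T$ along these bags produces the composed decomposition. In parallel I need the apex-absorption identity $\calA_{t_1}(\calT_{s_2}(\calY))\subset\calT_{s_2+t_1}(\calA_{t_1}(\calY))$, obtained by adding the $t_1$ apices to every bag of the inner decomposition, which inflates adhesion by at most $t_1$ and converts the apex set into a uniform apex set at each torso, and the trivial identity $\calA_{t_1}(\calA_{t_2}(\calY))=\calA_{t_1+t_2}(\calY)$.

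Putting the pieces together, each torso of the outer Grohe--Marx decomposition is either in $\calA_{t_1}(\calD_d)\subset\calA_{t_1+t_2}(\calL_f\cup\calD_d)$, handled trivially by the one-bag tree decomposition, or in $\calA_{t_1}(\calG')\subset\calA_{t_1}(\calT_{s_2}(\calA_{t_2}(\calL_f)))\subset\calT_{s_2+t_1}(\calA_{t_1+t_2}(\calL_f))$, expanded by the inner Robertson--Seymour--Grohe decomposition. Applying the composition lemma to substitute these inner decompositions into the outer one yields $\calG\subset\calT_s(\calA_t(\calL_f\cup\calD_d))$ with $s=\max(s_1,s_2+t_1)$ and $t=t_1+t_2$, as required. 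The one genuinely delicate point, and the step I would write out most carefully, is verifying that the substitution does not alter the torso structure of the inner decomposition, i.e.\ that the extra clique edges added to the original bag $H$ to form its torso end up as edges inside the chosen bag of $T_H$ containing the corresponding adhesion set, so that the inner torsos remain members of $\calL_f$ (up to the already-accounted apices); everything else is bookkeeping over parameter sizes.
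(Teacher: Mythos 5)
Your proposal is correct and follows essentially the route the paper intends: the paper presents this theorem as a direct consequence of the Grohe--Marx decomposition (torsos in a fixed proper minor-closed class or bounded-degree-plus-apices) combined with Theorem~\ref{thm:grohe} applied to the minor-closed torsos, without spelling out the gluing. Your composition lemma for tree decompositions and the apex-absorption identity are exactly the routine bookkeeping needed to make the paper's ``this altogether yields'' precise, and your noted subtlety about the inner torsos surviving substitution is handled by observing that the composed torso is a subgraph of the inner torso and that $\calA_t(\calL_f\cup\calD_d)$ is monotone.
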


\subsection{Poset terminology and notation}

We let $\leq_P$ and $<_P$ denote the non-strict and strict order relations of a poset $P$.
The \emph{comparability graph} of a poset $P$ is the graph whose vertices represent the elements of $P$ and whose edges represent the strict comparabilities in $P$.
The \emph{height} of a poset $P$ is the maximum size of a chain in $P$, which is the maximum size of a clique in the comparability graph of $P$.
For $x,y\in P$, we say that $y$ \emph{covers} $x$ if $x<_Py$ and there is no $z\in P$ with $x<_Pz<_Py$.
The \emph{cover graph} of $P$ is the graph whose vertices represent the elements of $P$ and whose edges represent the cover relations of $P$.
We will identify $P$ with the vertex set of the cover graph of $P$ and simply call the elements of $P$ \emph{vertices}.
For $x\in P$, we let $\upset_Px=\{y\in P\colon y\geq_Px\}$ and $\downset_Px=\{y\in P\colon y\leq_Px\}$.
A set $X\subseteq P$ is an \emph{up-set} (\emph{down-set}) of $P$ if $X=\bigcup_{x\in X}\upset_Px$ ($X=\bigcup_{x\in X}\downset_Px$, respectively).
The subposet of $P$ induced on a set $X\subseteq P$ is denoted by $P[X]$. 

The \emph{dimension} of a poset $P$ is the minimum number $d$ of linear extensions $L_1,\ldots,L_d$ of $P$ such that $x\leq_Py$ if and only if $x\leq_{L_i}y$ for $1\leq i\leq d$.
An \emph{incomparable pair} of $P$ is an ordered pair of vertices of $P$ that are incomparable in $P$.
We let $\Inc(P)$ denote the set of all incomparable pairs of $P$.
An \emph{alternating cycle} in $\Inc(P)$ is an indexed set $\{(x_i,y_i)\colon 1\leq i\leq k\}\subseteq\Inc(P)$ such that $x_i\leq_Py_{i+1}$ for $1\leq i\leq k$, where the subscripts are assumed to go cyclically over $\{1,\ldots,k\}$.
The following elementary lemma relates linear extensions of $P$ to alternating cycles in $\Inc(P)$.

\begin{lemma}[Trotter, Moore \cite{TM77}]
\label{lem:alt-cycle}
Let\/ $I\subseteq\Inc(P)$.
There is a linear extension\/ $L$ of\/ $P$ such that\/ $x>_Ly$ for every\/ $(x,y)\in I$ if and only if\/ $I$ contains no alternating cycle.
\end{lemma}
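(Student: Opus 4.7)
The plan is to handle the two directions separately; the forward implication is a short chase, while the backward implication is the content of the lemma and is proved via a topological-order argument.

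For the forward direction, suppose $L$ is a linear extension of $P$ with $x>_Ly$ for every $(x,y)\in I$, and assume toward a contradiction that $\{(x_i,y_i)\colon 1\leq i\leq k\}\subset I$ is an alternating cycle. For each $i$, the hypothesis on $L$ gives $y_i<_Lx_i$, and the relation $x_i\leq_Py_{i+1}$ together with $L$ being a linear extension gives $x_i\leq_Ly_{i+1}$. Chaining these inequalities cyclically over $i=1,\dots,k$ yields $y_1<_Ly_1$, a contradiction.

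For the backward direction, assume $I$ contains no alternating cycle. I would construct $L$ as a topological order of a directed graph $D$ on the vertex set $P$ whose arcs encode precisely the constraints that $L$ must satisfy: include an arc $a\to b$ (a \emph{P-arc}) whenever $a<_Pb$, and include an arc $a\to b$ (an \emph{I-arc}) whenever $(b,a)\in I$. Any topological order $L$ of $D$ is automatically a linear extension of $P$ (because of the P-arcs) and satisfies $x>_Ly$ for every $(x,y)\in I$ (because of the I-arcs), so it suffices to show that $D$ is acyclic.

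Suppose for contradiction that $D$ contains a directed cycle $v_0\to v_1\to\dots\to v_{n-1}\to v_0$. Since $<_P$ alone admits no directed cycle in $P$, the cycle must use at least one I-arc. Let the I-arcs, listed in cyclic order around the cycle, be $v_{i_j}\to v_{i_j+1}$ for $j=1,\dots,k$, and set $x_j=v_{i_j+1}$ and $y_j=v_{i_j}$, so that $(x_j,y_j)\in I$. The portion of the cycle strictly between the $j$th and $(j{+}1)$st I-arcs consists of a (possibly empty) sequence of P-arcs from $x_j$ to $y_{j+1}$, which by transitivity of $\leq_P$ gives $x_j\leq_Py_{j+1}$ (indices cyclic, with equality when the sequence is empty). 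Thus $\{(x_j,y_j)\colon 1\leq j\leq k\}$ is an alternating cycle in $I$, contradicting the assumption. Hence $D$ is acyclic and the desired $L$ exists.

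The only delicate step is the last paragraph: one must correctly bundle maximal runs of consecutive P-arcs (using transitivity of $\leq_P$) so that the remaining I-arc pattern fits the definition of an alternating cycle exactly. Once this bookkeeping is set up, no further work is needed, and the proof is essentially combinatorial with no quantitative content.
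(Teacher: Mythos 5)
The paper does not prove this lemma; it cites Trotter and Moore and uses the statement as a black box, so there is no in-paper proof to compare against. Your argument is correct and is in fact the standard proof from the literature: the forward direction by chasing the chain of strict and non-strict inequalities around the cycle in $L$, and the backward direction by building the auxiliary digraph $D$ whose arcs encode the order constraints (arcs for $<_P$ and reversed arcs for $I$) and taking a topological order after showing $D$ is acyclic. The only point worth stating explicitly in a write-up is that in extracting the alternating cycle from a directed cycle of $D$, one works with a simple directed cycle so that the I-arcs are distinct, and that the case of a single I-arc with an empty or nonempty run of P-arcs yields $x_1\leq_Py_1$, contradicting $(x_1,y_1)\in\Inc(P)$ directly; your phrasing subsumes this but could be made more explicit. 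Otherwise, no gaps.
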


\noindent
We call a coloring of a set $I\subseteq\Inc(P)$ \emph{valid} if it makes no alternating cycle in $I$ monochromatic.
As a corollary to Lemma \ref{lem:alt-cycle}, the dimension of $P$ is the minimum number of colors in a valid coloring of $\Inc(P)$.
This characterization of the dimension will be used further in the paper.

\subsection{Preliminary bounds on the dimension}

In the proof of Theorem \ref{thm:main}, we will use the following result already recalled in the introduction.

\begin{theorem}[Joret et~al.\ \cite{JMM+16}]
\label{thm:tree-width}
For any\/ $h,k\in\setN$, posets of height at most\/ $h$ with cover graphs of tree-width at most\/ $k$ have dimension bounded in terms of\/ $h$ and\/ $k$.
\end{theorem}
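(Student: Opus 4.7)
The plan is to prove Theorem~\ref{thm:tree-width} by induction on the height $h$, holding the tree-width bound $k$ fixed throughout. The base case $h=1$ is trivial: an antichain has dimension at most~$2$. For the inductive step, assume the statement holds for height $h-1$ with some bound $f(h-1,k)$. Let $P$ be a poset of height $h$ whose cover graph $G$ has tree-width at most $k$, and fix a tree decomposition $(T,(B_t)_{t\in V(T)})$ of $G$ of width at most $k$, rooted at an arbitrary node. For each $v\in P$, let $t(v)$ be the topmost node of $T$ whose bag contains $v$, and for each incomparable pair $(x,y)\in\Inc(P)$ let $t(x,y)$ be the least common ancestor of $t(x)$ and $t(y)$ in $T$; write $B(x,y)=B_{t(x,y)}$ for the associated \emph{apex bag}. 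The essential property of $B(x,y)$ is that, outside of $B(x,y)$ itself, every path in $G$ from $x$ to $y$ must meet $B(x,y)$.

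Attach to each pair $(x,y)$ a \emph{signature} $\sigma(x,y)$ that records $B(x,y)$ (as a set, up to order) together with, for each $b\in B(x,y)$, which of the relations $b<_P x$, $b=x$, $b>_P x$, $b\parallel_P x$ holds, and likewise for $y$. Since $|B(x,y)|\leq k+1$, the number of signatures attached to a single bag is at most $16^{k+1}$. Using Lemma~\ref{lem:alt-cycle}, it suffices to colour $\Inc(P)$ so that no alternating cycle is monochromatic; and it is enough to do so within each signature class, since there are only $N=16^{k+1}$ signatures per bag. The heart of the argument is a lemma asserting that the pairs sharing a fixed signature admit a valid colouring with at most $f(h-1,k)$ colours, giving a total of $N\cdot f(h-1,k)$ colours.

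To prove this lemma, consider an alternating cycle $\{(x_i,y_i)\colon 1\leq i\leq m\}$ consisting of pairs in a single signature class $\sigma$, associated to a bag $B$. For each $i$ there is a cover path from $x_i$ to $y_{i+1}$ in $G$; the separation property of $B$ forces this path either to lie entirely on one ``side'' of $B$ or to witness a comparability with some fixed $b\in B$. The recorded signature pins down, uniformly over $i$, which $b\in B$ witnesses the comparability on each side, and this in turn lets me replace the cycle by an alternating cycle in a strictly shorter subposet---namely, the poset obtained from $P$ by deleting all elements that are minimal in the appropriate up-set of some element of $B$. Such a deletion strips one level off $P$, so the resulting poset has height at most $h-1$, and its cover graph, being an induced subgraph of $G$, still has tree-width at most~$k$. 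The inductive hypothesis then supplies a valid colouring of the transferred pairs, which pulls back to a valid colouring of the signature class.

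The main obstacle is the transfer step: one has to certify that every alternating cycle inside a signature class really does project to an alternating cycle in a height-$(h-1)$ instance. A naive definition of the signature is too weak for this, because the ``witness'' element of $B$ may depend on $i$ even when all signatures agree. I expect to have to refine $\sigma(x,y)$ further---for instance, by recording a fixed linear order on $B$ together with the positions at which the cover path leaves $B$ towards $x$ and towards $y$---so that the transferred sequence is genuinely cyclic in the required sense. Once this bookkeeping is set up correctly, the inductive step closes and yields a bound $f(h,k)=16^{k+1}\cdot f(h-1,k)$, proving the theorem.
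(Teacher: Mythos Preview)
Your plan has a genuine gap at the counting step. As you have defined it, the signature $\sigma(x,y)$ records the actual bag $B(x,y)$, so two incomparable pairs with different apex bags automatically receive different signatures; the number of signature classes is therefore unbounded, not $16^{k+1}$. The figure $N\cdot f(h-1,k)$ only makes sense if you can reuse the same palette across all bags, i.e.\ if a monochromatic alternating cycle can never visit pairs with different apex bags---but nothing in your argument prevents that, and in fact it is the central difficulty. The paper makes exactly this point in its overview: statement~\ref{stat:A} (bounded dimension on each bag implies bounded dimension globally) is \emph{false} already for $h=s=d=2$, via the poset of $1$- and $2$-element subsets of $\{1,\dots,n\}$. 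Your induction-on-height scheme, even if the transfer step worked, would essentially be trying to prove a variant of~\ref{stat:A}. The transfer step itself is also not there yet: you acknowledge that the witness $b\in B$ may vary with $i$ even within a fixed signature, and ``deleting the minimal elements of the up-set of some $b\in B$'' neither obviously reduces height by exactly one nor obviously preserves the alternating cycle.

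What the paper does instead avoids both problems simultaneously. It does not induct on height; it builds a signature whose size is bounded in terms of $h$ and $k$ but which does \emph{not} name a particular bag. The key device is a bounded coloring $\phi$ of the bags themselves (so that bags within $\close_{2h}$ of one another get distinct colors), together with the sequences $\tau(\low{x}),\tau(\low{y})$ of $\phi$-colors along $\calB_h(\low{x}),\calB_h(\low{y})$. Because these sequences have bounded length and alphabet, there are boundedly many possibilities; yet they contain enough information that any alternating cycle with constant signature is forced to ``live over'' a single bag $Z$ common to all its pairs, where it projects via $\mu_Z,\nu_Z$ to a monochromatic alternating cycle in a gadget extension $P_Z$ or $P'_Z$ of bounded dimension (Lemma~\ref{lem:main} and Lemma~\ref{lem:signature}). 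In the tree-width case, each bag has at most $k+1$ vertices, so the gadget extensions have bounded size and hence bounded dimension; this immediately gives Theorem~\ref{thm:tree-width}. The missing idea in your proposal is precisely this mechanism for collapsing the unbounded family of bags to a bounded amount of signature data.
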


\noindent
We show how the above can be reproved using the results of this paper at the end of Section \ref{sec:proof}.

Theorem \ref{thm:tree-width} and the next lemma will allow us to conclude that dimension is also bounded for posets of bounded height whose cover graphs and all their minors have bounded local tree-width.
The next lemma is a generalization of a statement implicit in the work of Streib and Trotter \cite{ST14}.
They applied it to planar graphs in their proof that posets with bounded height and planar cover graphs have bounded dimension.

\begin{lemma}
\label{lem:diameter}
Let\/ $\calG$ be a minor-closed class of graphs, let\/ $h,d\in\setN$, and let\/ $\calG_{2h-2}$ denote the class of graphs in\/ $\calG$ with radius at most\/ $2h-2$.
If posets of height at most\/ $h$ with cover graphs in\/ $\calG_{2h-2}$ have dimension at most\/ $d$, then posets of height at most\/ $h$ with cover graphs in\/ $\calG$ have dimension bounded in terms of\/ $h$ and\/ $d$.
\end{lemma}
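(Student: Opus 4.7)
The plan is a BFS-based covering of the incomparable pairs of $P$ in its cover graph $G$. We may assume $G$ is connected (otherwise handle components separately with negligible slack). Fix a vertex $v_0 \in V(G)$ and let $\phi(x) = d_G(v_0, x)$. The key Lipschitz property is $|\phi(x) - \phi(y)| \leq h - 1$ whenever $x \leq_P y$, since the comparability is witnessed by a cover path of length at most $h - 1$.

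The basic tool is that for each maximum element $z$ of $P$, the principal down-set $\downset_P z$ is convex in $P$, so the cover graph of $P[\downset_P z]$ is exactly the induced subgraph $G[\downset_P z] \in \calG$, which has radius at most $h - 1 \leq 2h - 2$ centered at $z$ (every cover path from $z$ stays inside $\downset_P z$). Hence the hypothesis gives $\dim P[\downset_P z] \leq d$. More generally, for maxima $z_1, z_2$ sharing a lower bound $w \in \downset_P z_1 \cap \downset_P z_2$, the set $\downset_P z_1 \cup \downset_P z_2$ is a convex down-set, its cover graph lies in $\calG$, and has radius at most $2(h - 1) = 2h - 2$ centered at $w$, so the hypothesis again yields dimension at most $d$.

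To cover $\Inc(P)$ by boundedly many such sub-posets, partition the maxima of $P$ by $c(z) = \lfloor \phi(z)/h \rfloor \bmod 3 \in \{0,1,2\}$. For each $(x, y) \in \Inc(P)$, fix maxima $z_x \geq_P x$ and $z_y \geq_P y$, and assign the label $(c(z_x), c(z_y))$, giving nine labels. For each label $c$, the union $U_c$ of the relevant principal down-sets has BFS support confined to intervals of width $3h - 3$ separated by gaps of width $2$; since edges of $G$ shift $\phi$ by at most $1$, no edge bridges these gaps, and $G[U_c]$ decomposes into connected components each fitting in one such interval. Within each component, the hypothesis should be applied to suitable convex sub-posets, and a valid coloring assembled from these applications yields $\Inc(P) \to [\,f(h,d)\,]$.

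The main obstacle is controlling the dimension of each component of $G[U_c]$: within a single BFS interval, many distinct maxima may chain their overlapping principal down-sets, producing a component of unbounded radius to which the hypothesis cannot be applied wholesale. Overcoming this requires an additional decomposition step, for instance applying the same BFS-modular trick recursively on the ``overlap graph'' of maxima within the component, or performing a secondary BFS from a chosen central max and colouring by depth modulo another constant, ensuring that the resulting pieces have bounded-radius cover graphs. Combining the outer nine labels with the inner iterated colourings produces a valid colouring of $\Inc(P)$ with at most $f(h, d)$ colours for some function $f$, and hence $\dim P \leq f(h, d)$ by Lemma \ref{lem:alt-cycle}.
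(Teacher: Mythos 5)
Your proposal has a genuine gap, which you yourself flag in the final paragraph, and the suggested patch does not close it. The central problem is exactly the one you identify: within a single band $\{z : \lfloor \phi(z)/h\rfloor = m\}$, arbitrarily many maxima can have pairwise-overlapping principal down-sets (they could even all share the same $\phi$-value), so $G[U_c]$ can have a connected component of unbounded radius, and nothing in your scheme forces otherwise. The proposed remedy---recursive BFS or a modular colouring of an ``overlap graph''---is not an argument: it is unclear that a recursion of this kind terminates with parameters bounded in $h$ and $d$, and no invariant is offered that would make it do so. Partitioning by convex down-sets simply does not tame radius, because radius is not subadditive over unions of overlapping down-sets.

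The missing idea is contraction, which is where the \emph{minor-closed} hypothesis does the real work. The paper runs BFS from a \emph{minimal} element $v$ in the \emph{comparability} graph (not the cover graph), giving layers $A_0, A_1, \ldots$ with the clean parity structure: vertices in $A_i$ and $A_{i-1}$ compare in a direction determined by the parity of $i$, and vertices at comparability-distance $\geq 2$ are incomparable. For each $i \geq 2$ one forms $P_i$ from $P[A_0\cup\cdots\cup A_i]$ by \emph{contracting} the whole prefix $A_0\cup\cdots\cup A_{i-2}$ to a single new apex $v'$; since $G[A_0\cup\cdots\cup A_{i-2}]$ is connected, the resulting cover graph $G_i$ is a minor of $G$, hence lies in $\calG$, and every vertex of $G_i$ is within distance $2h-2$ of $v'$ because every comparability in $P_i$ is witnessed by a cover path of length at most $h-1$. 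This single contraction step is what makes the radius bound hold for the \emph{entire} layer-pair window at once, sidestepping the chaining problem that defeats your down-set union approach. The rest of the paper's proof is then a short alternating-cycle argument: incomparable pairs spanning layers $\geq 2$ apart get two extra colours, and remaining pairs live in some $\Inc(P_i)$ and are coloured by a valid colouring of $P_i$, with even and odd $i$ using disjoint palettes so that a monochromatic alternating cycle would be confined to one $P_i$, a contradiction. If you want to salvage your sketch, you should replace the union-of-down-sets device with contraction of an initial BFS segment to a single vertex; that is the step your argument lacks, and it is precisely where minor-closure (as opposed to mere monotonicity) is indispensable.
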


\noindent
The proof of Lemma \ref{lem:diameter} is a straightforward generalization of the argument of Streib and Trotter \cite{ST14}.
It is presented in Section \ref{sec:diameter} for the reader's convenience.

\begin{corollary}
\label{cor:ltw}
For any\/ $h\in\setN$ and\/ $f\colon\setN\to\setN$, posets of height at most\/ $h$ with cover graphs in\/ $\calL_f$ have dimension bounded in terms of\/ $h$ and\/ $f$.
\end{corollary}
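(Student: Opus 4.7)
The plan is to combine Lemma \ref{lem:diameter} with Theorem \ref{thm:tree-width}, using the fact that $\calL_f$ is a minor-closed class (as noted in the bulleted list of facts preceding Theorem \ref{thm:grohe}). The key observation is that membership in $\calL_f$ bounds the local tree-width of the graph itself, not just of its induced subgraphs, so once the radius is bounded the whole graph has bounded tree-width.

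More precisely, first I would apply Lemma \ref{lem:diameter} to the minor-closed class $\calG=\calL_f$ with the given height parameter $h$. This reduces the task to bounding, in terms of $h$ and $f$, the dimension of posets of height at most $h$ whose cover graph lies in $(\calL_f)_{2h-2}$, that is, in $\calL_f$ and has radius at most $2h-2$. Next, for any such cover graph $G$, I would use that $G$ is a minor of itself to deduce that its local tree-width is bounded by $f$; since $G$ is an induced subgraph of itself with radius at most $2h-2$, this yields $\mathrm{tw}(G)\leq f(2h-2)$. Theorem \ref{thm:tree-width} then bounds the dimension of the poset in terms of $h$ and $f(2h-2)$, and hence in terms of $h$ and $f$. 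Feeding this bound $d=d(h,f)$ back into Lemma \ref{lem:diameter} produces the desired bound on the dimension of all posets of height at most $h$ with cover graphs in $\calL_f$.

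There is no real obstacle here: the proof is essentially a two-line assembly of previously stated results, and the only thing worth being careful about is unwinding the definition of $\calL_f$ to see that bounded radius forces bounded tree-width (not merely bounded local tree-width of induced subgraphs of some minor).
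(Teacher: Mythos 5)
Your proposal matches the paper's proof exactly: the paper also observes that graphs in $\calL_f$ of radius at most $2h-2$ have tree-width at most $f(2h-2)$, invokes Theorem \ref{thm:tree-width} to bound the dimension for that restricted class, and then applies Lemma \ref{lem:diameter} using the minor-closedness of $\calL_f$. Your careful remark about why bounded radius forces bounded tree-width (a graph is a minor of itself and an induced subgraph of itself) is just spelling out what the paper leaves implicit.
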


\begin{proof}
The graphs in $\calL_f$ of radius at most $2h-2$ have bounded tree-width, so by Theorem \ref{thm:tree-width}, posets of height at most $h$ that have these graphs as cover graphs have bounded dimension.
The class $\calL_f$ is minor-closed, so the conclusion follows from Lemma \ref{lem:diameter}.
\end{proof}

Finally, we will use the fact that posets of bounded height with cover graphs of bounded maximum degree have bounded dimension, which is a consequence of the following old result.

\begin{theorem}[Rödl, Trotter \cite{RT-unpub}; Füredi, Kahn \cite{FK86}]
\label{thm:bounded-degree}
For every\/ $d\in\setN$, posets with comparability graphs of maximum degree at most\/ $d$ have dimension bounded in terms of\/ $d$.
\end{theorem}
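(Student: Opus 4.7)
I would prove Theorem~\ref{thm:bounded-degree} via the Lovász Local Lemma, following the approach of Füredi and Kahn. By Lemma~\ref{lem:alt-cycle}, the dimension of $P$ equals the minimum number of colors in a valid coloring of $\Inc(P)$, so it suffices to exhibit a valid coloring with $k=k(d)$ colors. Two immediate consequences of the hypothesis will be useful: since any chain of length $h$ produces a clique of size $h$ in the comparability graph, the height of $P$ is at most $d+1$; and the cover graph of $P$ is a subgraph of the comparability graph, so it also has maximum degree at most $d$.

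\textbf{Probabilistic construction.} Color the pairs of $\Inc(P)$ independently and uniformly at random with $k$ colors. For each alternating cycle $C=\{(x_i,y_i):1\leq i\leq\ell\}$, let $A_C$ be the event that $C$ is monochromatic; then $\Pr[A_C]=k^{1-\ell}$, and $A_C$ is mutually independent of $\{A_{C'}:C'\cap C=\varnothing\}$. Apply the asymmetric Lovász Local Lemma with weights $\lambda^{\ell}$ assigned to length-$\ell$ cycles, which reduces to verifying
\begin{equation*}
k^{1-\ell}\leq\lambda^{\ell}\prod_{C'\cap C\neq\varnothing}(1-\lambda^{|C'|}).
\end{equation*}
The key combinatorial ingredient is a bound of the form $(cd)^{O(\ell)}$ on the number of length-$\ell$ alternating cycles through any fixed pair: each relation $x_i\leq_P y_{i+1}$ confines $y_{i+1}$ to the bounded upper cone of $x_i$, and bounded height prevents cycles of unbounded ``essential'' length. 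For $\lambda$ below the resulting geometric threshold (of order $d^{-O(1)}$), the dependency sum $\sum_{\ell'}N_{\ell'}\lambda^{\ell'}$ converges uniformly in $|C|$, so the local lemma is satisfied whenever $k$ is a sufficiently large polynomial in $d$; this yields the desired bound on the dimension. (Füredi and Kahn refine the calculation to obtain the sharper bound $k=O(d\log^2 d)$.)

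\textbf{Main obstacle.} The delicate step is the counting of alternating cycles through a fixed pair. Since the partner $x_i$ of $y_i$ in an alternating cycle need only be incomparable to $y_i$, the intermediate elements naively appear to be unconstrained, and a direct enumeration is unbounded. Overcoming this requires exploiting the bounded height of $P$---itself forced by the degree bound---in order either to restrict to minimal alternating cycles of bounded length, or to choose the weights $\lambda^{\ell}$ so that their geometric decay absorbs the geometric growth of the cycle counts. Once this balance is achieved, the remainder of the local-lemma verification is routine and yields a uniform bound on the dimension in terms of $d$.
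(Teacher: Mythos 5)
The paper does not prove Theorem~\ref{thm:bounded-degree}; it cites it from R\"odl--Trotter and F\"uredi--Kahn and applies it only via Corollary~\ref{cor:degree}, so there is no in-paper proof to compare your attempt against.

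Judged on its own, your sketch correctly situates the result in the Local Lemma tradition and records some true and useful observations (height at most $d+1$, upsets of size at most $d+1$, final bound $O(d\log^2 d)$), but the construction you set up is not the one F\"uredi and Kahn actually run, and the ``main obstacle'' you flag is a genuine, unrepaired gap rather than a technicality. You randomize over the colors of incomparable pairs, with bad events the monochromatic alternating cycles, and to invoke the asymmetric Local Lemma you need a bound on the number of alternating cycles meeting a fixed one. The forward relations $x_i\leq_P y_{i+1}$ do confine $y_{i+1}$ to $\upset_P x_i$, which has at most $d+1$ elements, but the only constraint tying $y_i$ to its partner $x_i$ is \emph{incomparability}, and in a bounded-degree comparability graph almost every vertex is incomparable to a fixed one. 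Concretely, in a disjoint union of $n$ two-element chains (maximum comparability degree $1$) a fixed incomparable pair already lies on $\Theta(n)$ alternating cycles of length $3$, so no choice of weights $\lambda^\ell$ can make the dependency sum converge, and restricting to minimal cycles does not remove the freedom at each $x_i$. The actual F\"uredi--Kahn argument avoids this by running the Local Lemma over a random labeling or partition of the \emph{vertices} of $P$: there the bad events are genuinely local precisely because each vertex is comparable to at most $d$ others, which is what the hypothesis buys you, and the linear extensions are assembled afterwards from the labels. Your plan has the right flavor and the right numerology, but closing the gap requires reorganizing the randomness around vertices rather than around pairs, which is a different construction.
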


\begin{corollary}
\label{cor:degree}
For any\/ $h,d\in\setN$, posets of height at most\/ $h$ with cover graphs in\/ $\calD_d$ have dimension bounded in terms of\/ $h$ and\/ $d$.
\end{corollary}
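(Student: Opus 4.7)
The plan is to deduce Corollary \ref{cor:degree} directly from Theorem \ref{thm:bounded-degree} by showing that under the hypotheses, the comparability graph of $P$ also has bounded maximum degree, with a bound depending only on $h$ and $d$.

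First I would fix a poset $P$ of height at most $h$ whose cover graph $G$ has maximum degree at most $d$, and I would fix an arbitrary vertex $x\in P$. The key observation is that every vertex $y>_P x$ is the top of a chain $x=x_0<_Px_1<_P\cdots<_Px_k=y$ in which consecutive elements form cover relations; since the height of $P$ is at most $h$, we must have $k\leq h-1$. In particular, such a $y$ is reachable from $x$ by a walk of length at most $h-1$ in the cover graph $G$, and similarly for every vertex $y<_P x$. Hence the entire set of vertices comparable to $x$ lies within the ball of radius $h-1$ around $x$ in $G$.

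Next, using the maximum-degree bound on $G$, I would estimate that this ball contains at most $1+d+d^2+\cdots+d^{h-1}$ vertices, which is a quantity depending only on $h$ and $d$. Consequently, the degree of $x$ in the comparability graph of $P$ is at most $d+d^2+\cdots+d^{h-1}$. Since $x$ was arbitrary, the comparability graph of $P$ has maximum degree bounded by some function of $h$ and $d$.

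At this point Theorem \ref{thm:bounded-degree} applies and yields that the dimension of $P$ is bounded in terms of this degree bound, hence in terms of $h$ and $d$, completing the proof. There is no real obstacle here: the argument is a simple ball-growth estimate in the cover graph combined with the cited classical bound, and the only mild subtlety is noting that chains of covers realize every comparability so that radius in the cover graph dominates comparability distance.
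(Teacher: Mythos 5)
Your proposal is correct and matches the paper's own proof: both arguments bound the maximum degree of the comparability graph by observing that any two comparable elements are joined by a path of length at most $h-1$ in the cover graph (so that degree is at most $d+d^2+\cdots+d^{h-1}$), and then invoke Theorem \ref{thm:bounded-degree}.
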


\begin{proof}
Whenever $x$ and $y$ are comparable in a poset $P$, there is a path between $u$ and $v$ of length at most $h-1$ in the cover graph of $P$, where $h$ denotes the height of $P$.
Consequently, if the cover graph of $P$ has maximum degree $d$, then the comparability graph of $P$ has maximum degree at most $d^{h-1}$.
The conclusion now follows from Theorem \ref{thm:bounded-degree}.
\end{proof}

\section{Proof of Theorem \ref{thm:main}}
\label{sec:proof}

\subsection{Overview}

The starting point of the proof are Corollaries \ref{cor:ltw} and \ref{cor:degree}, which assert that posets of bounded height with cover graphs in $\calG=\calL_f\cup\calD_d$ have bounded dimension, for any $f\colon\setN\to\setN$ and $d\in\setN$.
We extend the class of graphs $\calG$ that allows us to bound the dimension of posets with bounded height and with cover graphs in $\calG$ first by adding a bounded number of apices, and then by going through tree decomposition.

Adding apices is dealt with in subsection \ref{subsec:apices}.
Although removing $t$ apices from a poset can decrease its dimension by at most $t$ \cite{Hir55}, it can change the cover graph dramatically---many new cover relations can arise by transitivity through the removed apices, and this is the main issue in dealing with apices.

Going through tree decomposition is the main technical content of this paper.
A naive approach would be to try to prove the following statement:
\begin{enumeratea}
\item\label{stat:A} If $P$ is a poset of height at most $h$, $T$ is a tree decomposition of the cover graph of $P$ with adhesion at most $s$, and $P[X]$ has dimension at most $d$ for every bag $X$ of $T$, then the dimension of $P$ is bounded in terms of $h$, $s$ and $d$.
\end{enumeratea}
However, this statement is false already for $h=s=d=2$.
To see this, let $P$ be the set of all $1$-element and $2$-element subsets of $\{1,\ldots,n\}$ ordered by inclusion.
A valid tree decomposition of the cover graph of $P$ with adhesion $2$ is a star with $\{\{1\},\ldots,\{n\}\}$ as the center bag and with the sets $\{\{i\},\{j\},\{i,j\}\}$ for $1\leq i<j\leq n$ as the leaf bags.
It is easy to see that the subposet of $P$ induced on each of these bags has dimension $2$.
On the other hand, it was proved already by Dushnik and Miller \cite{DM41} that the dimension of $P$ is unbounded as $n\to\infty$.
It is worth noting that the statement \ref{stat:A} is true when $s=1$ even with no bound on the height of $P$ \cite{TWW18}.

The difficulty described above should not be surprising.
In the graph setting, properties of the subgraphs induced on the bags of a tree decomposition such as exclusion of a fixed small graph as a (topological) minor do not generalize to the entire graph either.
This is why graph structure theorems such as Theorem \ref{thm:grohe} and Theorem \ref{thm:grohe-marx} deal with \emph{torsos} instead of induced subgraphs.
The additional edges in the torso are used to imitate paths that connect vertices of the bag and pass through vertices outside the bag.
In our poset setting, we enrich the structure of the subposets induced on the bags by defining their \emph{gadget extensions}, and we prove the statement analogous to \ref{stat:A} but assuming that the gadget extensions have bounded dimension:
\begin{enumerateb}
\item\label{stat:B} If $P$ is a poset of height at most $h$, $T$ is a tree decomposition of the cover graph of $P$ with adhesion at most $s$, and the gadget extensions of $P[X]$ have dimension at most $d$ for every bag $X$ of $T$, then the dimension of $P$ is bounded in terms of $h$, $s$ and $d$.
\end{enumerateb}
Gadget extensions are defined in subsection \ref{subsec:gadget}.
They are obtained from the induced subposet $P[X]$ by attaching a \emph{gadget} of bounded size to every adhesion set of $X$.
The role of these gadgets is to imitate the parts of alternating cycles in $\Inc(P)$ that lie outside $X$.

In subsection \ref{subsec:gadget-cover}, we will prove that the gadget extensions of the subposets of $P$ induced on the bags indeed have bounded dimension when the cover graph of $P$ belongs to $\calT_s(\calA_t(\calL_f\cup\calD_d))$.

The statement \ref{stat:B} is proved in subsection \ref{subsec:tree-decomp}, where it is formulated as Lemma \ref{lem:main}.
The goal of the proof is to define a \emph{signature} $\Sigma(x,y)$ for every incomparable pair $(x,y)$ of $P$ so that the number of distinct signatures is bounded and the incomparable pairs with a common signature contain no alternating cycle.
The signatures are the colors in a requested valid coloring of $\Inc(P)$.

Several ideas for constructing the signatures are borrowed from the proof of Theorem \ref{thm:tree-width} in \cite{JMM+16}.
However, that proof heavily relies on the assumption that the bags of the tree decomposition have bounded size and therefore the tree decomposition very accurately describes the entire structure of the cover graph.
Here, the structure of each bag can be very complex, and all we can make use of are valid colorings of the incomparable pairs of the gadget extensions.

Each signature $\Sigma(x,y)$ will include the colors of the incomparable pairs imitating $(x,y)$ in valid colorings of the incomparable pairs of the gadget extensions of the subposets of $P$ induced on a bounded number of carefully selected bags.
The choice of these bags will ensure that every alternating cycle of incomparable pairs of $P$ with a common signature maps into a monochromatic alternating cycle in one of the gadget extensions.
Since the latter cannot exist (the colorings in the gadget extensions are assumed to be valid), we will conclude that the signatures represent a valid coloring of $\Inc(P)$.

\subsection{Apices}
\label{subsec:apices}

This short subsection is devoted to the proof of the following.

\begin{lemma}
\label{lem:apex}
Let\/ $\calG$ be a monotone class of graphs and\/ $h,t,d\in\setN$.
If posets of height at most\/ $h$ with cover graphs in\/ $\calG$ have dimension at most\/ $d$, then posets of height at most\/ $h$ with cover graphs in\/ $\calA_t(\calG)$ have dimension bounded in terms of\/ $h$, $t$ and\/ $d$.
\end{lemma}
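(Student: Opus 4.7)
Let $P$ have cover graph $G\in\calA_t(\calG)$, witnessed by an apex set $A\subseteq V(P)$ with $|A|\leq t$ and $G\setminus A\in\calG$. Iterating Hiraguchi's inequality $\dim(P)\leq\dim(P\setminus\{a\})+1$, it suffices to bound $\dim(P[V(P)\setminus A])$ in terms of $h$, $t$, and $d$; the naive attempt of applying the hypothesis directly to this induced subposet fails because its cover graph is typically a proper supergraph of $G\setminus A$: any chain in $P$ all of whose interior elements lie in $A$ produces a new cover after the apices are deleted.

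To bypass this I would introduce the auxiliary poset $P^*$ on $V(P)\setminus A$ whose order is the transitive closure of the orientation of $G\setminus A$ by $<_P$; equivalently, $x\leq_{P^*}y$ iff there is a sequence $x=z_0,z_1,\dots,z_m=y$ of vertices in $V(P)\setminus A$ with each $z_i$ covered by $z_{i+1}$ in $P$. A straightforward verification shows that $P^*$ is a poset of height at most $h$, that its order is contained in that of $P[V(P)\setminus A]$, and that the cover relation of $P^*$ consists exactly of the covers of $P$ between non-apex vertices. Consequently the cover graph of $P^*$ is $G\setminus A\in\calG$, so the hypothesis yields a valid coloring $c^*$ of $\Inc(P^*)$ with at most $d$ colors; since $x\parallel_Py$ implies $x\parallel_{P^*}y$, the map $c^*$ is defined on all of $\Inc(P[V(P)\setminus A])$.

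To finish, I would attach to each non-apex vertex $v$ its \emph{apex profile} $\tau(v)\in\{<,>,\parallel\}^A$ recording the comparability of $v$ with each $a\in A$ in $P$, and assign to every pair $(x,y)\in\Inc(P[V(P)\setminus A])$ the signature $(\tau(x),\tau(y),c^*(x,y))$; this uses at most $9^t d$ colors. A hypothetical monochromatic alternating cycle $(x_i,y_i)_{i=1}^k$ falls into one of two cases. If $x_i\leq_{P^*}y_{i+1}$ for every $i$, the cycle is a monochromatic alternating cycle in $\Inc(P^*)$, contradicting the validity of $c^*$. Otherwise there is some $i$ for which every saturated chain in $P$ from $x_i$ to $y_{i+1}$ meets $A$, producing an apex $a\in A$ with $x_i<_Pa<_Py_{i+1}$; the common signature then forces $\tau(x_j)(a)=<$ and $\tau(y_j)(a)=>$ for every $j$, hence $x_j<_Pa<_Py_j$, contradicting $x_j\parallel_Py_j$. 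Combined with Hiraguchi this yields $\dim(P)\leq 9^t d+t$. The only mildly delicate point is the identification of the cover graph of $P^*$ with $G\setminus A$; once the apex profile is built into the signature, the alternating cycle analysis closes cleanly.
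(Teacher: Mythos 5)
Your argument is correct and reaches the desired conclusion, but it takes a genuinely different route from the paper's proof. The paper sidesteps the cover-graph-modification issue you identify by a simpler device: instead of deleting only the apex $a$, delete the whole up-set $\upset_Pa$ (respectively, down-set $\downset_Pa$). Removing an up-set or a down-set from a poset never creates new cover relations, so the cover graphs of $P\setminus\upset_Pa$ and $P\setminus\downset_Pa$ are just the induced subgraphs $G\setminus\upset_Pa$ and $G\setminus\downset_Pa$, both subgraphs of $G\setminus\{a\}$ and hence in $\calG$ by monotonicity. Since every element of $\downset_Pa$ is comparable to every element of $\upset_Pa$, each incomparable pair of $P\setminus\{a\}$ lies entirely in one of these two subposets; combining two valid $d$-colorings and spending two extra colors on the incomparable pairs involving $a$ gives a valid $(2d+2)$-coloring of $\Inc(P)$, and iterating $t$ times handles $\calA_t(\calG)$. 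Your approach is dual in spirit: instead of deleting \emph{more} vertices so that the induced subposet has the right cover graph, you keep all non-apex vertices but pass to the weaker order $P^*$ generated by the non-apex covers, which by construction has cover graph exactly $G\setminus A$, and you then compensate for the lost comparabilities by recording the apex profiles of $x$ and $y$ in the signature. Both arguments are sound and yield bounds exponential in $t$; the paper's is shorter, works one apex at a time, and never alters the order relation, while yours treats all apices at once and makes explicit which comparability information (the per-apex profile of each endpoint) must be restored to repair validity. One small calibration: removing an arbitrary point can decrease dimension by $2$, not $1$, so the Hiraguchi step costs $2t$ rather than $t$; this does not affect the conclusion.
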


\begin{proof}
It is clear that $\calA_t(\calG)=\smash[t]{\calA_1^{(t)}}(\calG)$ (where $\smash[t]{\calA_1^{(t)}}$ denotes the $t$-fold composition of $\calA_1$), so it is enough to consider the case of $t=1$.
Let $P$ be a poset of height at most $h$ with cover graph $G\in\calA_1(\calG)$.
Hence there is a vertex $a\in P$ such that $G\setminus\{a\}\in\calG$.
The cover graphs of $P\setminus\upset_Pa$ and $P\setminus\downset_Pa$ are $G\setminus\upset_Pa$ and $G\setminus\downset_Pa$, respectively.
Both $P\setminus\upset_Pa$ and $P\setminus\downset_Pa$ are subposets of $P$, so they have height at most $h$.
Both $G\setminus\upset_Pa$ and $G\setminus\downset_Pa$ are subgraphs of $G\setminus\{a\}$, so they belong to $\calG$.
It follows that $P\setminus\upset_Pa$ and $P\setminus\downset_Pa$ have dimension at most $d$.
Therefore, there is a valid coloring of $\Inc(P\setminus\upset_Pa)\cup\Inc(P\setminus\downset_Pa)$ with at most $2d$ colors.
Since every element of $\downset_Pa$ is comparable to every element of $\upset_Pa$, the above coloring covers all incomparable pairs of $P\setminus\{a\}$.
Two more colors are enough for the incomparable pairs of $P$ involving $a$---one for those having $a$ as the first member, and the other for those having $a$ as the second member.
\end{proof}

\subsection{Gadget extensions}
\label{subsec:gadget}

Let $P$ be a poset, $T$ be a tree decomposition of the cover graph of $P$, and $Z$ be a bag of $T$.
Let $\calK_Z$ denote the family of adhesion sets of $Z$, that is, $\calK_Z=\{Z\cap Z'\colon ZZ'\in E(T)\}$.
It follows that $\calK_Z$ is a family of cliques in the torso of $Z$ in $T$.
For every vertex $x\in P\setminus Z$, let $K_Z(x)=Z\cap Z'\in\calK_Z$, where $Z'$ is the unique neighbor of $Z$ in $T$ that lies on the $T$-paths from $Z$ to all bags of $T$ containing $x$.

Let $K\in\calK_Z$.
We call an up-set $U$ of $P[Z]$ a \emph{$K$-up-set} if every minimal element of $P[U]$ belongs to $K$ or, equivalently, if $U=\bigcup_{z\in K\cap U}\upset_{P[Z]}z$.
Similarly, we call a down-set $D$ of $P[Z]$ a \emph{$K$-down-set} if every maximal element of $P[D]$ belongs to $K$ or, equivalently, if $D=\bigcup_{z\in K\cap D}\downset_{P[Z]}z$.
The following properties are easy consequences of these definitions and of the definition of $\calK_Z$:
\begin{itemize}
\item if $x\in P\setminus Z$, then $Z\cap\upset_Px$ is a $K_Z(x)$-up-set of $P[Z]$,
\item if $y\in P\setminus Z$, then $Z\cap\downset_Py$ is a $K_Z(y)$-down-set of $P[Z]$.
\end{itemize}
Indeed, if $x\in P\setminus Z$, $z\in Z$, and $x<_Pz$, then the path in the cover graph of $P$ witnessing the comparability of $x$ and $z$ contains a vertex $z'\in K_Z(x)$ such that $x<_Pz'\leq_Pz$, and similarly for the second property.
A $K$-up-set or a $K$-down-set may be empty.

We define two posets, the \emph{weak gadget extension} $P_Z$ of $P[Z]$ and the \emph{strong gadget extension} $P'_Z$ of $P[Z]$, as follows.
To define the common vertex set of $P_Z$ or $P'_Z$, we take $Z$ and add, for every $K\in\calK_Z$, a \emph{gadget}---a set of at most $2^{|K|+1}$ new vertices that consists of
\begin{itemize}
\item a vertex $x_{Z,K,S}$ for every $K$-up-set $S$ of $P[Z]$ such that there is $x\in P\setminus Z$ with $S=Z\cap\upset_Px$ and $K=K_Z(x)$,
\item a vertex $y_{Z,K,S}$ for every $K$-down-set $S$ of $P[Z]$ such that there is $y\in P\setminus Z$ with $S=Z\cap\downset_Py$ and $K=K_Z(y)$.
\end{itemize}
The order $\leq_{P_Z}$ is defined so that
\begin{itemize}
\item $P_Z[Z]=P[Z]$,
\item $x_{Z,K,S}<_{P_Z}z$ for $z\in S$, so that $S=Z\cap\upset_{P_Z}x_{Z,K,S}$,
\item $y_{Z,K,S}>_{P_Z}z$ for $z\in S$, so that $S=Z\cap\downset_{P_Z}y_{Z,K,S}$,
\item $x_{Z,K,S}<_{P_Z}y_{Z,K',S'}$ whenever $S\cap S'\neq\emptyset$.
\end{itemize}
The order $\leq_{\smash[b]{P'_Z}}$ is defined like $\leq_{P_Z}$ except that the last condition above is replaced by the following:
\begin{itemize}
\item $x_{Z,K,S}<_{\smash[b]{P'_Z}}y_{Z,K',S'}$ whenever $K=K'$ or $S\cap S'\neq\emptyset$.
\end{itemize}
It is straightforward to verify that $\leq_{P_Z}$ and $\leq_{\smash[b]{P'_Z}}$ are indeed partial orders.
In particular, every $x_{Z,K,S}$ is a minimal vertex and every $y_{Z,K,S}$ is a maximal vertex of both $P_Z$ and $P'_Z$.
It follows from the definition of $x_{Z,K,S}$ and $y_{Z,K,S}$ that the height of $P_Z$ and $P'_Z$ is at most the height of $P$.

We define mappings $\mu_Z\colon P\to P_Z\,(P'_Z)$ and $\nu_Z\colon P\to P_Z\,(P'_Z)$ as follows:
\begin{align*}
\mu_Z(x)&=\begin{cases}
x_{Z,K,S}&\text{if $x\in P\setminus Z$, where $K=K_Z(x)$ and $S=Z\cap\upset_Px$},\\
x&\text{if $x\in Z$},
\end{cases}\\
\nu_Z(y)&=\begin{cases}
y_{Z,K,S}&\text{if $y\in P\setminus Z$, where $K=K_Z(y)$ and $S=Z\cap\downset_Py$},\\
y&\text{if $y\in Z$}.
\end{cases}
\end{align*}

\begin{lemma}
\label{lem:aux}
The mappings\/ $\mu_Z$ and\/ $\nu_Z$ have the following properties:
\begin{enumerate}
\item\label{item:aux1} If\/ $(x,y)\in\Inc(P)$, then\/ $(\mu_Z(x),\nu_Z(y))\in\Inc(P_Z)$.
\item\label{item:aux2} If\/ $(x,y)\in\Inc(P)$, then\/ $(\mu_Z(x),\nu_Z(y))\in\Inc(P'_Z)$ unless\/ $x,y\notin Z$ and\/ $K_Z(x)=K_Z(y)$.
\item\label{item:aux3} If\/ $x\leq_Py$ and there is\/ $z\in Z$ such that\/ $x\leq_Pz\leq_Py$, then\/ $\mu_Z(x)\leq_{P_Z}\nu_Z(y)$.
\item\label{item:aux4} If\/ $x\leq_Py$, then\/ $\mu_Z(x)\leq_{\smash[b]{P'_Z}}\nu_Z(y)$.
\end{enumerate}
\end{lemma}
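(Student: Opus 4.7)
The plan is to prove all four parts by case analysis on whether each of $x$ and $y$ lies in $Z$ or in $P\setminus Z$, in each case unpacking the defining clauses of $\leq_{P_Z}$ or $\leq_{\smash[b]{P'_Z}}$ and reading off information about $\leq_P$ from the sets $S=Z\cap\upset_Px$ and $S'=Z\cap\downset_Py$ whenever these are defined.

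I would first dispatch parts \ref{item:aux3} and \ref{item:aux4}. For \ref{item:aux3}, the four cases match the four combinations of $x$ and $y$ lying in $Z$ or outside. When both lie in $Z$, the claim reduces to $P_Z[Z]=P[Z]$. When exactly one of them lies in $Z$, say $y\in Z$, then $y\in S$ and the clause $x_{Z,K,S}<_{P_Z}y$ gives $\mu_Z(x)<_{P_Z}\nu_Z(y)$; the case $x\in Z$ is symmetric. When neither lies in $Z$, the given witness $z$ belongs to $S\cap S'$, which is therefore nonempty, triggering the clause $x_{Z,K,S}<_{P_Z}y_{Z,K',S'}$. Part \ref{item:aux4} then follows from \ref{item:aux3} whenever some intermediate $z\in Z$ exists, because every relation of $P_Z$ persists in $P'_Z$, and such a $z$ is automatic whenever at least one of $x,y$ lies in $Z$ (take $z=x$ or $z=y$). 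The only remaining case is $x,y\in P\setminus Z$ with no intermediate $z\in Z$. Here I plan to pick a cover-relation chain $x=x_0,x_1,\ldots,x_n=y$ in $P$ and invoke the tree-decomposition axioms: consecutive $x_i$ share a bag, so the union of the bag-subtrees of the $x_i$ is connected in $T$ and, by assumption, disjoint from $\{Z\}$. This forces the chain to live in a single component of $T\setminus\{Z\}$, hence $K_Z(x_i)$ is constant along the chain and in particular $K_Z(x)=K_Z(y)$; the extra clause of $\leq_{\smash[b]{P'_Z}}$ for gadget vertices sharing the same $K$ then closes the case. This tree-decomposition step is where I expect the main technical obstacle to lie.

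For parts \ref{item:aux1} and \ref{item:aux2}, I would split on the direction of a hypothetical comparability between $\mu_Z(x)$ and $\nu_Z(y)$. The direction $\nu_Z(y)\leq\mu_Z(x)$ is ruled out at once: every gadget vertex $x_{Z,K,S}$ is minimal and every $y_{Z,K,S}$ is maximal in both $P_Z$ and $P'_Z$, so this direction forces both images to lie in $Z$, whereupon $y\leq_Px$ contradicts $(x,y)\in\Inc(P)$. For the direction $\mu_Z(x)\leq\nu_Z(y)$, I would unpack the defining clause of $\leq_{P_Z}$ or $\leq_{\smash[b]{P'_Z}}$ responsible for it and check that each such clause, except the extra one in $\leq_{\smash[b]{P'_Z}}$ that compares any $x_{Z,K,\cdot}$ with any $y_{Z,K,\cdot}$ sharing the same $K$, produces a $z\in Z$ with $x\leq_Pz\leq_Py$ and hence contradicts incomparability. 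The remaining clause in $\leq_{\smash[b]{P'_Z}}$ corresponds exactly to the exception $x,y\notin Z$ with $K_Z(x)=K_Z(y)$ allowed in \ref{item:aux2}. Distinctness of $\mu_Z(x)$ and $\nu_Z(y)$ is automatic in every subcase, since the images are either both in $Z$ (and distinct because $x\neq y$) or of different types.
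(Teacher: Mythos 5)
Your proposal is correct and follows essentially the same route as the paper: ruling out $\nu_Z(y)\leq\mu_Z(x)$ via minimality/maximality of gadget vertices, extracting a witness $z\in Z$ from the clauses defining $\leq_{P_Z}$, and for part~\ref{item:aux4} splitting on whether a comparability path passes through $Z$ (your bag-subtree connectivity argument is the contrapositive phrasing of the paper's statement that such a path must meet $Z$ when $K_Z(x)\neq K_Z(y)$). The paper's proof is terser but rests on the same case analysis and the same appeal to the tree-decomposition structure.
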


\begin{proof}
For the proof of \ref{item:aux1}, suppose $\mu_Z(x)\leq_{P_Z}\nu_Z(y)$.
By the definition of $\mu_Z$ and $\nu_Z$, there is $z\in Z$ such that $x\leq_Pz\leq_Py$, a contradiction.
We cannot have $\mu_Z(x)>_{P_Z}\nu_Z(y)$ either, so $(\mu_Z(x),\nu_Z(y))\in\Inc(P_Z)$.
By \ref{item:aux1} and the definition of $P'_Z$, we directly have \ref{item:aux2}.
For the proof of \ref{item:aux3}, observe that the existence of $z$ and the definition of $\mu_Z$ and $\nu_Z$ yield $\mu_Z(x)\leq_{P_Z}z\leq_{P_Z}\nu_Z(y)$.
It remains to prove \ref{item:aux4}.
If $x,y\notin Z$ and $K_Z(x)=K_Z(y)$, then $\mu_Z(x)<_{\smash[b]{P'_Z}}\nu_Z(y)$ follows from the definition of $P'_Z$.
Now, suppose $x\in Z$, $y\in Z$, or $x,y\notin Z$ and $K_Z(x)\neq K_Z(y)$.
The path in the cover graph of $P$ witnessing the comparability of $x$ and $y$ must pass through a vertex $z\in Z$ such that $x\leq_Pz\leq_Py$.
This and \ref{item:aux3} imply $\mu_Z(x)\leq_{\smash[b]{P'_Z}}\nu_Z(y)$.
\end{proof}

\subsection{Cover graphs of gadget extensions}
\label{subsec:gadget-cover}

Let $P$, $T$ and $Z$ be as in the previous subsection.
Every edge of the cover graph of $P_Z$ which does not belong to the cover graph of $P[Z]$ connects either a vertex $x_{Z,K,S}$ with a minimal element of $P[S]$ or a vertex $y_{Z,K,S}$ with a maximal element of $P[S]$.
The cover graph of $P'_Z$ has the same edges as the cover graph of $P_Z$ and additionally edges connecting pairs of vertices $x_{Z,K,S}$ and $y_{Z,K,S'}$ within the same gadget.
No two vertices from different gadgets are connected by an edge of the cover graph of $P_Z$ or $P'_Z$.
This motivates the following definition.

For a monotone class of graphs $\calG$ and for $s\in\setN$, let $\calE_s(\calG)$ denote the class of all graphs $G$ for which there are a set $Z\subseteq V(G)$, a family $\calK$ of cliques in $G[Z]$, and a partition $\{X_K\}_{K\in\calK}$ of $V(G)\setminus Z$ with the following properties:
\begin{itemize}
\item $G[Z]\in\calG$,
\item $|K|\leq s$ and $|X_K|\leq 2^{s+1}$ for every $K\in\calK$,
\item $E(G)=E(G[Z])\cup\bigcup_{K\in\calK}E(G[K\cup X_K])$, that is, every edge with one of its endpoints in $X_K$ has its other endpoint in $K\cup X_K$.
\end{itemize}
It follows that if $P$ is a poset, $T$ is a tree decomposition of the cover graph of $P$ with adhesion at most $s$, $Z$ is a bag of $T$, and the torso of $Z$ in $T$ belongs to $\calG$, then the cover graphs of the weak and strong gadget extensions $P_Z$ and $P'_Z$ of $P[Z]$ belong to $\calE_s(\calG)$.

\begin{lemma}
\label{lem:gadget-ltw}
For\/ $s,t\in\setN$ and\/ $f\colon\setN\to\setN$, there is\/ $f'\colon\setN\to\setN$ such that\/ $\calE_s(\calA_t(\calL_f))\subseteq\calA_t(\calL_{f'})$.
\end{lemma}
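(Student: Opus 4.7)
The plan is to use the $t$ apices witnessing $G[Z]\in\calA_t(\calL_f)$ as the apex set for $G$ too. Writing $A\subset Z$ with $|A|\leq t$ and $H:=G[Z\setminus A]\in\calL_f$, and setting $G':=G\setminus A$, the task reduces to producing $f'$ with $G'\in\calL_{f'}$. By the definition of $\calE_s$, the graph $G'$ splits cleanly: $G'[Z\setminus A]=H$, and the remaining vertices partition into gadgets $X_K$ of size at most $2^{s+1}$, each attached only to a clique $K\setminus A$ of size at most $s$ in $H$.

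The core task is to bound the local tree-width of an arbitrary minor $M$ of $G'$. I would split the branch sets of $M$ into $V_{Z'}$ (those meeting $Z\setminus A$) and $V_g$ (those contained in a single gadget), and define $M^{Z'}$ as the minor of $H$ with branch sets $B_w\cap(Z\setminus A)$ for $w\in V_{Z'}$. The first step is to check that each $B_w\cap(Z\setminus A)$ is connected in $H$ and that $M[V_{Z'}]=M^{Z'}$ as graphs. The key point, and what makes the construction work, is that whenever $B_w$ enters a gadget $X_K$ and also has vertices in $Z\setminus A$, connectivity of $B_w$ forces $B_w$ to contain a vertex of $K\setminus A$; since $K\setminus A$ is a clique in $H$, every edge of $M[V_{Z'}]$ that appears to use a gadget detour is already realized by an $H$-edge in $M^{Z'}$.

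Once this is in place, $M^{Z'}$ is a minor of $H\in\calL_f$ and so has local tree-width at most $f$. A consequence of the same clique argument is that $M$-distances on $V_{Z'}$ coincide with $M^{Z'}$-distances, because the set $N_K:=\{w\in V_{Z'}:B_w\cap(K\setminus A)\neq\emptyset\}$ is a clique in $M^{Z'}$ and any detour of an $M$-path through $V_g$ enters and leaves via $N_K$ for a single $K$. Thus a ball of radius $r$ in $M$ meets $V_{Z'}$ inside a ball of radius at most $r+1$ in $M^{Z'}$. To turn a tree decomposition of $M^{Z'}[V_{Z'}^r]$ of width at most $f(r+1)$ into one of $M[B_M(v,r)]$, for each gadget $X_K$ meeting the ball I would append one new bag containing $N_K\cap V_{Z'}^r$ together with the at most $2^{s+1}$ gadget-only vertices of $X_K$ in the ball, attached to any bag already containing the clique $N_K\cap V_{Z'}^r$. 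This yields, for example, $f'(r):=f(r+1)+s+2^{s+1}$.

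The step I expect to be most delicate is the verification $M[V_{Z'}]=M^{Z'}$: edges of $M$ may in principle be realized in $G'$ by paths traversing many vertices of a gadget and crossing several branch sets at once, and the argument is precisely that the clique structure of $K\setminus A$ in $H$ collapses every such detour to a single $H$-edge between two branch sets. Everything else is routine bookkeeping on top of this observation.
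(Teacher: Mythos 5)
Your proof is correct, and at its core it follows the same strategy as the paper: drop the apex set $A$ of $G[Z]$, observe that the core $H=G[Z\setminus A]$ lies in $\calL_f$, bound the tree-width of a bounded-radius piece of the core, and then attach each gadget $X_K$ (together with its attachment clique) as one extra bag hanging off a bag that already contains that clique. The arithmetic $f'(r)=f(r+1)+s+2^{s+1}$ is slightly more generous than the paper's $f'(r)=\max\{f(r),\,2^{s+1}+s-1\}$, but that is immaterial.

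Where you go genuinely further than the paper's write-up is in working with an arbitrary \emph{minor} $M$ of $G\setminus A$ rather than only with $G\setminus A$ itself. The paper's argument takes an induced subgraph $H$ of $G\setminus A$ of radius at most $r$, bounds its tree-width, and then asserts ``this shows $G\setminus A\in\calL_{f'}$.'' Strictly read, that only establishes that $G\setminus A$ has local tree-width bounded by $f'$; since $\calL_{f'}$ is by definition the class of graphs \emph{all of whose minors} have local tree-width bounded by $f'$, one must also check that minors of $G\setminus A$ inherit this bound. Bounded local tree-width is in general not preserved under taking minors, so this step is not automatic. It is saved here precisely by the observation you make explicit: the branch sets of any minor $M$ of $G\setminus A$ split into those meeting $Z\setminus A$ (giving a minor $M^{Z'}$ of $H$, with $M[V_{Z'}]=M^{Z'}$ because the attachment cliques let you collapse every gadget detour to a single $H$-edge) and those confined to a single gadget (at most $2^{s+1}$ per gadget, all attached to the clique $N_K$ of size at most $s$). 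In other words, $M$ still has the ``core in $\calL_f$ plus small gadgets on small cliques'' structure, so the same tree-decomposition surgery applies. This is exactly the missing justification for the ``all minors'' clause in the definition of $\calL_{f'}$, and your verification of $M[V_{Z'}]=M^{Z'}$ and of the distance-preservation via the cliques $N_K$ is sound. So your proof is not merely correct---it is a more careful rendering of the argument the paper intends.
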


\begin{proof}
Let $G\in\calE_s(\calA_t(\calL_f))$, and let $Z$, $\calK$ and $\{X_K\}_{K\in\calK}$ be as in the definition of $\calE_s(\calA_t(\calL_f))$.
In particular, we have $G[Z]\in\calA_t(\calL_f)$, so there is a set $A\subseteq Z$ with $|A|\leq t$ and $G[Z\setminus A]\in\calL_f$.
Let $H$ be an induced subgraph of $G\setminus A$ with radius at most $r$, where $r$ is a positive integer.
For every $K\in\calK$, the neighborhood in $H$ of the set $X_K\cap V(H)$ is a subset of $K\cap V(H)$, which is a clique in $H$, and hence the subgraph $H[Z\setminus A]$ of $G[Z\setminus A]$ also has radius at most $r$.
Therefore, the graph $H[Z\setminus A]$ has tree-width at most $f(r)$.
Let $T$ be a tree-decomposition of $H[Z\setminus A]$ of width at most $f(r)$.
For every $K\in\calK$, since $K\cap V(H)$ is a clique in $H$, there is a bag $Y_K$ of $T$ such that $K\cap V(H)\subseteq Y_K$.
For every $K\in\calK$, add the set $(K\cup X_K)\cap V(H)$ to $T$ as a new bag that becomes connected to $Y_K$ by a new edge of $T$.
Thus a tree decomposition of $H$ of width at most $\max\{f(r),2^{s+1}+s-1\}$ is obtained.
This shows that $G\setminus A\in\calL_{f'}$ and hence $G\in\calA_t(\calL_{f'})$, where $f'(r)=\max\{f(r),2^{s+1}+s-1\}$ for every $r\in\setN$.
\end{proof}

\begin{lemma}
\label{lem:gadget-degree}
For\/ $s,t,d\in\setN$, there is\/ $d'\in\setN$ such that\/ $\calE_s(\calA_t(\calD_d))\subseteq\calA_t(\calD_{d'})$.
\end{lemma}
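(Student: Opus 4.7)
The plan is to reuse the apex set for $G[Z]$ as an apex set for $G$ itself, with a somewhat larger degree bound. Let $G\in\calE_s(\calA_t(\calD_d))$ be witnessed by $Z\subset V(G)$, a family $\calK$ of cliques in $G[Z]$, and a partition $\{X_K\}_{K\in\calK}$ of $V(G)\setminus Z$. Since $G[Z]\in\calA_t(\calD_d)$, pick $A\subset Z$ with $|A|\leq t$ such that $G[Z\setminus A]$ has maximum degree at most $d$. I will show that $G\setminus A$ has maximum degree at most some $d'=d'(s,t,d)$, which gives $G\in\calA_t(\calD_{d'})$.

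The bound proceeds by case analysis on a vertex $v\in V(G)\setminus A$. If $v\in X_K$ for some (necessarily unique) $K\in\calK$, then the edge condition defining $\calE_s$ forces every neighbor of $v$ in $G$ to lie in $K\cup X_K$, so $v$ has at most $|K|+|X_K|-1\leq s+2^{s+1}-1$ neighbors. Otherwise $v\in Z\setminus A$: the neighbors of $v$ in $Z\setminus A$ number at most $d$, while the neighbors of $v$ in $V(G)\setminus Z$ are confined to $\bigcup_{K\in\calK,\,v\in K}X_K$, contributing at most $2^{s+1}$ vertices per clique.

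The crux is therefore to bound the number of $K\in\calK$ containing $v$. Every such $K$ is a clique in $G[Z]$ with $v\in K$, so $K\subset\{v\}\cup N_{G[Z]}(v)$. Since $v\in Z\setminus A$ and $G[Z\setminus A]\in\calD_d$, the degree of $v$ in $G[Z]$ is at most $d+|A|\leq d+t$, so the number of such cliques is at most $2^{d+t}$. Consequently $v$ has at most $d+2^{d+t+s+1}$ neighbors in $G\setminus A$. Taking $d'=\max\{s+2^{s+1}-1,\,d+2^{d+t+s+1}\}$ yields $G\setminus A\in\calD_{d'}$, hence $G\in\calA_t(\calD_{d'})$. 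I anticipate no substantive obstacle; the one delicate point is that a vertex of $Z$ may in principle belong to many members of $\calK$, but bounded degree of $v$ in $G[Z]$ keeps this number under control, in the same spirit as the bound on clique-sized bags used in the proof of Lemma \ref{lem:gadget-ltw}.
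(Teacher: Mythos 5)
Your proof is correct and follows essentially the same route as the paper's: reuse the apex set $A$ from $G[Z]$, then split into the two cases $v\in X_K$ and $v\in Z\setminus A$, bounding in the latter case the number of cliques $K\in\calK$ containing $v$ by $2^{d+t}$ via the degree bound on $v$ in $G[Z]$. The only difference is cosmetic (in the $v\in X_K$ case the paper uses $|K\setminus A|\leq d+1$ where you use $|K|\leq s$), yielding a slightly different but equally valid constant $d'$.
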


\begin{proof}
Let $G\in\calE_s(\calA_t(\calD_d))$, and let $Z$, $\calK$ and $\{X_K\}_{K\in\calK}$ be as in the definition of $\calE_s(\calA_t(\calD_d))$.
In particular, we have $G[Z]\in\calA_t(\calD_d)$, so there is a set $A\subseteq Z$ with $|A|\leq t$ and $G[Z\setminus A]\in\calD_d$.
Let $v$ be a vertex of $G\setminus A$.
Suppose $v\in X_K\setminus A$ for some $K\in\calK$.
The neighborhood of $v$ in $G\setminus A$ is contained in $(K\cup X_K)\setminus A\setminus\{v\}$.
It follows that the degree of $v$ in $G\setminus A$ is at most $2^{s+1}+d$, as $|K\setminus A|\leq d+1$.
Now, suppose $v\in Z$.
Every clique $K\in\calK$ with $v\in K$ is a subset of the closed neighborhood of $v$ in $G[Z]$, which has size at most $d+t+1$.
Therefore, there are at most $2^{d+t}$ cliques $K\in\calK$ with $v\in K$.
Each such clique $K$ gives at most $2^{s+1}$ neighbors of $v$ in $X_K\setminus A$.
Therefore, the total degree of $v$ in $G\setminus A$ is at most $2^{d+t+s+1}+d$.
This shows that $G\setminus A\in\calD_{d'}$ and hence $G\in\calA_t(\calD_{d'})$, where $d'=2^{d+t+s+1}+d$.
\end{proof}

\begin{corollary}
\label{cor:gadget}
For\/ $h,s,t,d\in\setN$ and\/ $f\colon\setN\to\setN$, posets of height at most\/ $h$ with cover graphs in\/ $\calE_s(\calA_t(\calL_f\cup\calD_d))$ have dimension bounded in terms of\/ $h$, $s$, $t$, $f$ and\/ $d$.
In particular, for every poset\/ $P$ of height at most\/ $h$ with cover graph\/ $G\in\calT_s(\calA_t(\calL_f\cup\calD_d))$, the weak and strong gadget extensions\/ $P_Z$ and\/ $P'_Z$ of\/ $P[Z]$ for each bag\/ $Z$ of the tree decomposition of\/ $G$ witnessing\/ $G\in\calT_s(\calA_t(\calL_f\cup\calD_d))$ have dimension bounded in terms of\/ $h$, $s$, $t$, $f$ and\/ $d$.
\end{corollary}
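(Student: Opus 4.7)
The plan is to treat this corollary as an assembly of the lemmas and corollaries already established, and not as anything requiring a new idea. First I would record two elementary set-theoretic identities that follow at once from the definitions, since the membership tests for $\calA_t(\cdot)$ and $\calE_s(\cdot)$ each reduce to checking whether a single induced subgraph of the given graph lies in the parameter class: namely $\calA_t(\calG_1\cup\calG_2)=\calA_t(\calG_1)\cup\calA_t(\calG_2)$ and $\calE_s(\calG_1\cup\calG_2)=\calE_s(\calG_1)\cup\calE_s(\calG_2)$. Applying them in turn decomposes the class of interest as $\calE_s(\calA_t(\calL_f\cup\calD_d))=\calE_s(\calA_t(\calL_f))\cup\calE_s(\calA_t(\calD_d))$.

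Next I would feed each piece to the appropriate lemma from this subsection. Lemma \ref{lem:gadget-ltw} supplies $f'\colon\setN\to\setN$ with $\calE_s(\calA_t(\calL_f))\subset\calA_t(\calL_{f'})$, and Lemma \ref{lem:gadget-degree} supplies $d'\in\setN$ with $\calE_s(\calA_t(\calD_d))\subset\calA_t(\calD_{d'})$. So every cover graph $G$ in the original class lies in one of $\calA_t(\calL_{f'})$ or $\calA_t(\calD_{d'})$. To bound the dimension of a poset $P$ of height at most $h$ with such a cover graph, I would then apply, in the first case, Corollary \ref{cor:ltw} together with Lemma \ref{lem:apex} (valid because $\calL_{f'}$ is minor-closed, hence monotone), and in the second case Corollary \ref{cor:degree} together with Lemma \ref{lem:apex} (valid because $\calD_{d'}$ is topologically closed, hence monotone). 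The maximum of the two resulting bounds depends only on $h$, $s$, $t$, $f$ and $d$, which yields the first assertion.

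For the ``in particular'' clause, I would invoke the observation made just before Lemma \ref{lem:gadget-ltw}: for any $G\in\calT_s(\calA_t(\calL_f\cup\calD_d))$ with witnessing tree decomposition $T$ and any bag $Z$ of $T$, the torso of $Z$ lies in $\calA_t(\calL_f\cup\calD_d)$, so the cover graphs of the gadget extensions $P_Z$ and $P'_Z$ belong to $\calE_s(\calA_t(\calL_f\cup\calD_d))$; moreover, as noted in Section \ref{subsec:gadget}, these gadget extensions have height at most $h$. Applying the first assertion to $P_Z$ and $P'_Z$ then yields the desired bound. I do not foresee any substantive obstacle: the corollary is purely a bookkeeping step combining the two reductions of Lemmas \ref{lem:gadget-ltw} and \ref{lem:gadget-degree} with the preliminary dimension bounds of Corollaries \ref{cor:ltw} and \ref{cor:degree} and the apex-absorption of Lemma \ref{lem:apex}, and the only things to verify carefully are the set-theoretic identities and the monotonicity hypotheses required by Lemma \ref{lem:apex}.
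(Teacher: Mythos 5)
Your proposal is correct and follows essentially the same route as the paper; the set-theoretic identities $\calA_t(\calG_1\cup\calG_2)=\calA_t(\calG_1)\cup\calA_t(\calG_2)$ and $\calE_s(\calG_1\cup\calG_2)=\calE_s(\calG_1)\cup\calE_s(\calG_2)$ that you spell out are exactly what the paper leaves implicit in its one-line inclusion $\calE_s(\calA_t(\calL_f\cup\calD_d))\subset\calA_t(\calL_{f'}\cup\calD_{d'})$, and the rest (Lemmas \ref{lem:gadget-ltw}, \ref{lem:gadget-degree}, \ref{lem:apex} with Corollaries \ref{cor:ltw}, \ref{cor:degree}, plus the observation preceding Lemma \ref{lem:gadget-ltw} for the second assertion) matches the paper's argument.
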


\begin{proof}
By Lemma \ref{lem:gadget-ltw} and Lemma \ref{lem:gadget-degree}, there are $f'\colon\setN\to\setN$ and $d'\in\setN$ such that $\calE_s(\calA_t(\calL_f\cup\calD_d))\subseteq\calA_t(\calL_{f'}\cup\calD_{d'})$.
By Corollary \ref{cor:ltw}, Corollary \ref{cor:degree}, and Lemma \ref{lem:apex}, posets of height at most $h$ with cover graphs in $\calA_t(\calL_{f'}\cup\calD_{d'})$ have dimension bounded in terms of $t$, $f'$ and $d'$.
The second statement follows from the fact that the gadget extensions $P_Z$ and $P'_Z$ have height at most $h$ and their cover graphs belong to $\calE_s(\calA_t(\calL_f\cup\calD_d))$.
\end{proof}

\subsection{Tree decomposition}
\label{subsec:tree-decomp}

This entire subsection is devoted to the proof of the following lemma, which is the heart of the proof of Theorem \ref{thm:main}.

\begin{lemma}
\label{lem:main}
Let\/ $h,s,d\in\setN$.
Let\/ $P$ be a poset of height at most\/ $h$.
Let\/ $T$ be a tree decomposition of the cover graph of\/ $P$ with adhesion at most\/ $s$.
If the weak and strong gadget extensions\/ $P_Z$ and\/ $P'_Z$ of\/ $P[Z]$ for each bag\/ $Z$ of\/ $T$ have dimension at most\/ $d$, then the dimension of\/ $P$ is bounded in terms of\/ $h$, $s$ and\/ $d$.
\end{lemma}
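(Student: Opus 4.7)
My plan is to construct a valid coloring of $\Inc(P)$ with a bounded number of colors by attaching to each incomparable pair a bounded-size \emph{signature} and arguing that no monochromatic alternating cycle can arise. First, for each bag $Z$ of $T$, fix valid $d$-colorings $\phi_Z\colon\Inc(P_Z)\to\{1,\dots,d\}$ and $\phi'_Z\colon\Inc(P'_Z)\to\{1,\dots,d\}$, which exist by hypothesis. Root $T$ at an arbitrary bag and, for each $v\in P$, let $\alpha(v)$ denote the highest bag containing $v$. The mappings $\mu_Z,\nu_Z$ and Lemma \ref{lem:aux} will be the main tools for transferring information between $\Inc(P)$ and the gadget extensions.

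For each $(x,y)\in\Inc(P)$ I would define $\Sigma(x,y)$ to record: (i) the heights $h_P(x),h_P(y)\in\{1,\dots,h\}$; (ii) a bounded amount of tree-structural data (the \emph{shape} of $(x,y)$) consisting of the relative position of $\alpha(x)$ and $\alpha(y)$ in the rooted $T$, and, for a bounded collection $\mathcal{W}(x,y)$ of ``witness bags'' determined from that position, the adhesion sets $K_Z(x),K_Z(y)$ (each of size at most $s$); (iii) for each $Z\in\mathcal{W}(x,y)$, the color $\phi_Z(\mu_Z(x),\nu_Z(y))$ or $\phi'_Z(\mu_Z(x),\nu_Z(y))$, the choice between weak and strong extension being dictated by (ii). Provided $|\mathcal{W}(x,y)|$ is bounded in terms of $h$ and $s$, the total number of signatures is bounded in terms of $h$, $s$, and $d$.

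To verify validity, suppose for contradiction that $\{(x_i,y_i)\colon 1\leq i\leq k\}$ is an alternating cycle in $\Inc(P)$ with a common signature $\Sigma$. From the shared shape I would extract a single bag $Z^\ast$ lying in every $\mathcal{W}(x_i,y_i)$ in ``the same position'' — for instance, a highest tree-separator between $x_i$ and $y_i$ — so that: (a) for every $i$, $(\mu_{Z^\ast}(x_i),\nu_{Z^\ast}(y_i))\in\Inc(P_{Z^\ast})$, or $\in\Inc(P'_{Z^\ast})$ when the adhesion data in $\Sigma$ excludes the failure case $K_{Z^\ast}(x_i)=K_{Z^\ast}(y_i)$, by Lemma \ref{lem:aux}(1)--(2); (b) for every $i$, the cover-graph path witnessing $x_i\leq_P y_{i+1}$ crosses $Z^\ast$, or else we are working in the strong extension where no crossing is needed, so that $\mu_{Z^\ast}(x_i)\leq\nu_{Z^\ast}(y_{i+1})$ in the appropriate extension by Lemma \ref{lem:aux}(3)--(4). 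Together (a) and (b) show that the image of the cycle under $(\mu_{Z^\ast},\nu_{Z^\ast})$ is a monochromatic alternating cycle in $P_{Z^\ast}$ or $P'_{Z^\ast}$, contradicting validity of the chosen coloring there.

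The main obstacle is the design of the shape and of $\mathcal{W}(x,y)$ so that the common bag $Z^\ast$ can always be extracted from a monochromatic cycle. The bounded-height hypothesis is essential: any chain in $P$ has at most $h$ elements, which limits to a function of $h$ the number of tree levels a single pair can genuinely ``see'' and thereby keeps $\mathcal{W}(x,y)$ bounded. The delicate point is balancing the weak extension $P_Z$ (which always preserves incomparability via Lemma \ref{lem:aux}(1) but only propagates comparabilities through chains traversing $Z$, via Lemma \ref{lem:aux}(3)) against the strong extension $P'_Z$ (which always propagates comparabilities via Lemma \ref{lem:aux}(4) but may destroy incomparability when $x,y$ lie outside $Z$ through the same adhesion set); the signature must encode just enough adhesion data to select correctly between them for each pair in the cycle. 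This mirrors the strategy of \cite{JMM+} for bounded tree-width, with the valid colorings of the gadget extensions playing the role that direct access to (then bounded) bag vertices plays there.
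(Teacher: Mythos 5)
Your high-level strategy---record in a signature the colors of the pairs $(\mu_Z(x),\nu_Z(y))$ in valid colorings of the gadget extensions for a bounded set of bags $Z$, and then transfer a monochromatic alternating cycle into a single $P_{Z^\ast}$ or $P'_{Z^\ast}$---is indeed the route the paper takes. But the steps you flag as ``the main obstacle'' and ``the delicate point'' are exactly where the real content of the proof lies, and your proposal leaves them unresolved. First, you never specify how to extract the common bag $Z^\ast$. The paper's solution is to define $\low{x}$ as the unique \emph{lowest} (root-ward) bag containing $x$ (note your $\alpha(v)$, the ``highest bag containing $v$,'' is not even well-defined, since the subtree of bags containing $v$ may have several leaves), to set up a bounded-size family $\calB_h(\low{x})$ via a carefully designed relation $\close_k$, and to prove a non-trivial claim (Lemma~\ref{lem:I}) that after discarding four ``far away'' pair classes controlled by two depth-first search labelings of $T$, the families $\calB_h(\low{x})$ and $\calB_h(\low{y})$ always intersect. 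You have nothing playing the role of these DFS labelings or of the sets $I_1,\dots,I_4$, and without that preprocessing there is no reason a common candidate $Z^\ast$ need exist at all.

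Second, and more seriously, your dichotomy ``the path crosses $Z^\ast$ (use $P_{Z^\ast}$) or else use the strong extension $P'_{Z^\ast}$'' does not cover all cases. The choice between weak and strong must be driven by whether $K_{Z^\ast}(x_i)=K_{Z^\ast}(y_i)$ (the paper's $\pi_Z$), not by whether the witnessing paths cross $Z^\ast$: when $K_{Z^\ast}(x_i)=K_{Z^\ast}(y_i)$ you are \emph{forced} into $P_{Z^\ast}$ because $P'_{Z^\ast}$ destroys the incomparability (Lemma~\ref{lem:aux}\ref{item:aux2}), yet the paths $x_i\leq_P y_{i+1}$ may still fail to cross $Z^\ast$, so Lemma~\ref{lem:aux}\ref{item:aux3} gives you nothing. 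The paper handles this (Subcase~1.2) by an argument that does not map into any gadget extension at all: it records in the signature a color that pins down, for each $i$, a bag $Z'_i$ one step above $Z$ on the $\close_h$-path, together with a three-valued comparison datum $t_Z$, and derives a contradiction from the monotonicity of $\prec$ around the cycle. Nothing in your proposal anticipates this alternative mechanism, and the identifier data $\id_Z$ you would also need to prune the cycle down to where the dichotomy applies is likewise absent. As written, the proposal is a correct restatement of the paper's strategy at the level of intent, but the parts you defer are precisely the parts that make the lemma true.
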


For this entire subsection, let $P$ be a poset of height at most $h$, let $T$ be a tree decomposition of the cover graph of $P$ with adhesion at most $s$, and assume that the weak and strong gadget extensions $P_Z$ and $P'_Z$ of $P[Z]$ over all bags $Z$ of $T$ have dimension at most $d$.

Fix an arbitrary bag as the root of $T$.
We will envision $T$ as a \emph{planted tree}---a tree drawn in the plane so that the root lies at the bottom, the tree grows upwards, and the children of every bag are assigned an order from left to right.
Let $\leq$ denote the bottom-to-top order of $T$.
That is, we have $X\leq Y$ whenever a bag $X$ lies on the path in $T$ from a bag $Y$ down to the root.
Note that $\leq$ is an order on the bags and should not be confused with the order $\leq_P$ of the poset $P$.
For every $x\in P$, let $\low{x}$ denote the unique lowest bag of $T$ containing $x$.

For bags $X$ and $Y$, let $X\close_kY$ denote that there is a sequence of bags $Z_0,\ldots,Z_\ell$ such that $0\leq\ell\leq k$, $Z_0=X$, $Z_\ell=Y$, and there is a sequence of vertices $z_1,\ldots,z_\ell\in P$ such that $z_i\in Z_{i-1}\cap Z_i$ and $\low{z_i}=Z_i$ for $1\leq i\leq\ell$.

\begin{lemma}[cf.\ Lemma 5.2 (1)--(6) in \cite{JMM+16}]
\label{lem:close}
The relation\/ $\close_k$ has the following properties:
\begin{enumerate}
\item\label{item:close1} if\/ $X\close_kY$ and\/ $k\leq\ell$, then\/ $X\close_\ell Y$,
\item\label{item:close2} $X\close_0Y$ if and only if\/ $X=Y$,
\item\label{item:close3} $X\close_{k+\ell}Z$ if and only if there exists a bag\/ $Y$ such that\/ $X\close_kY$ and\/ $Y\close_\ell Z$,
\item\label{item:close4} if\/ $X\close_kY$, then\/ $X\geq Y$,
\item\label{item:close5} if\/ $X\close_kZ$ and\/ $X\geq Y\geq Z$, then\/ $Y\close_kZ$,
\item\label{item:close6} if\/ $X\close_kY$ and\/ $X\close_kZ$, then\/ $Y\close_kZ$ or\/ $Z\close_kY$.
\end{enumerate}
\end{lemma}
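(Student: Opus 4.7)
The plan is to work directly from the definition of $\close_k$, exploiting two simple structural facts over and over: first, the tree-decomposition axiom that the set of bags of $T$ containing a given vertex is a subtree; and second, that the set of bags $\leq X$ (the ancestors of $X$ in $T$, together with $X$ itself) forms a chain. Combining the first fact with $\low{z_i}=Z_i$ and $z_i\in Z_{i-1}$, any single step of a witness sequence satisfies $Z_i\leq Z_{i-1}$, because $Z_i$ is the $\leq$-minimum of the subtree of bags containing $z_i$, and $Z_{i-1}$ is another such bag.

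Parts \ref{item:close1}--\ref{item:close3} are essentially definitional. Item \ref{item:close1} just inflates the length bound $\ell\leq k$; item \ref{item:close2} is the case $\ell=0$, which forces $Z_0=X=Y$; and item \ref{item:close3} is by splitting a witness sequence at position $k$ in one direction and concatenating two witness sequences in the other. Part \ref{item:close4} follows by induction on $\ell$: each step gives $Z_i\leq Z_{i-1}$ by the observation above, and transitivity yields $Y=Z_\ell\leq Z_0=X$.

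The only slightly delicate item is \ref{item:close5}. Given a witness sequence $Z_0=X\geq Z_1\geq\cdots\geq Z_\ell=Z$ (monotonicity coming from \ref{item:close4}), all of the $Z_i$ lie on the chain of ancestors of $X$ in $T$, and from $X\geq Y\geq Z$ so does $Y$. Let $j$ be the smallest index with $Z_j\leq Y$; it exists because $Z_\ell=Z\leq Y$. If $j=0$ then $X=Z_0\leq Y\leq X$ so $X=Y$, and the original sequence already witnesses $Y\close_k Z$. Otherwise $Z_j\leq Y<Z_{j-1}$, so $Y$ lies on the $T$-path from $Z_{j-1}$ to $Z_j$ (which consists precisely of the bags $W$ with $Z_j\leq W\leq Z_{j-1}$); the subtree property applied to $z_j\in Z_{j-1}\cap Z_j$ then yields $z_j\in Y$, and since $\low{z_j}=Z_j$, the truncated sequence $Y,Z_j,Z_{j+1},\ldots,Z_\ell$, with witnesses $z_j,\ldots,z_\ell$, establishes $Y\close_k Z$; its length is $\ell-j+1\leq\ell\leq k$.

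Finally \ref{item:close6} is immediate from \ref{item:close4} and \ref{item:close5}: from $X\close_kY$ and $X\close_kZ$ we get $X\geq Y$ and $X\geq Z$, so $Y$ and $Z$ both lie on the ancestor chain of $X$ and are therefore comparable in $\leq$; whichever of them is the larger plays the role of the intermediate bag in \ref{item:close5} applied to the other pair. The only step that required a moment's thought is \ref{item:close5}, specifically verifying that the same vertex $z_j$ that witnesses the $(j-1)$-th step of the original sequence can be re-used to step directly out of $Y$; this is exactly the point where the subtree property of tree decompositions enters in an essential way.
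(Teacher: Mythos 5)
Your proof is correct and follows essentially the same approach as the paper's: parts (1)--(4) are dispatched as direct consequences of the definition, part (5) is proved by locating the index $j$ at which the witness sequence crosses $Y$ and re-using the witness vertex $z_j$ (via the subtree property) to truncate the sequence starting at $Y$, and part (6) follows from (4) and (5) by comparability along the ancestor chain. Your write-up is somewhat more explicit than the paper's terse version (e.g.\ handling $j=0$ separately and spelling out why $z_j\in Y$), but the underlying argument is identical.
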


\begin{proof}
Properties \ref{item:close1}--\ref{item:close4} follow directly from the definition of $\close_k$.
For the proof of \ref{item:close5}, assume $X>Z$, as otherwise the conclusion is trivial.
Let $Z_0,\ldots,Z_\ell$ be a sequence of bags and $z_1,\ldots,z_\ell$ be a sequence of vertices such that $1\leq\ell\leq k$, $Z_0=X$, $Z_k=Z$, and $z_i\in Z_{i-1}\cap Z_i$ and $\low{z_i}=Z_i$ for $1\leq i\leq\ell$.
Since $X\geq Y\geq Z$, there is an index $i$ such that $Z_{i-1}\geq Y\geq Z_i$.
It follows that $z_i\in Y$, so the sequences $Y,Z_i,\ldots,Z_\ell$ and $z_i,\ldots,z_\ell$ witness $Y\close_kZ$.
To see \ref{item:close6}, observe that $X\close_kY$ and $X\close_kZ$ imply $X\geq Y\geq Z$ or $X\geq Z\geq Y$, and the conclusion follows from \ref{item:close5}.
\end{proof}

\noindent
We will mostly use the properties listed in Lemma \ref{lem:close} implicitly, without a reference.

For every bag $X$, let $\calB_k(X)$ denote the family of bags $Y$ such that $X\close_kY$.
Thus $X\in\calB_k(X)$ and $\calB_k(X)$ lies entirely on the path in $T$ from $X$ down to the root.
It is important to note that $\calB_k(X)$ does not necessarily consist of consecutive bags on this path.

\begin{lemma}[cf.\ Lemma 5.2 (7) in \cite{JMM+16}]
\label{lem:B-size}
For every bag\/ $X$, we have\/ $|\calB_k(X)|\leq 1+s+\cdots+s^k$.
\end{lemma}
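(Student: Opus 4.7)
The plan is to prove the bound by induction on $k$. The base case $k=0$ follows immediately from Lemma \ref{lem:close}\ref{item:close2}, which gives $\calB_0(X)=\{X\}$, so $|\calB_0(X)|=1$.

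For the inductive step, the key intermediate statement is the bound $|\mathcal{F}|\leq s$ on the set $\mathcal{F}=\{Y\neq X\colon X\close_1 Y\}$. If $X$ is the root of $T$, then every $z\in X$ satisfies $\low{z}=X$ and $\mathcal{F}$ is empty. Otherwise, let $X^*$ denote the parent of $X$ in $T$. For any $Y\in\mathcal{F}$, by definition there is some $z\in X\cap Y$ with $\low{z}=Y$; since $Y<X$, the subtree property of tree decompositions forces $z$ to lie in every bag on the $T$-path from $Y$ to $X$, and in particular $z\in X\cap X^*$. Distinct elements of $\mathcal{F}$ arise as $\low{z}$ for distinct $z\in X\cap X^*$, so $|\mathcal{F}|\leq|X\cap X^*|\leq s$.

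Next, I would prove the inclusion $\calB_k(X)\subset\{X\}\cup\bigcup_{Y\in\mathcal{F}}\calB_{k-1}(Y)$. Given $Z\in\calB_k(X)$ with $Z\neq X$, take a witnessing sequence $X=Z_0,\ldots,Z_\ell=Z$ of length $\ell\leq k$ with accompanying vertices $z_1,\ldots,z_\ell$, and let $i$ be the smallest index with $Z_i\neq X$ (such an $i$ exists because $Z_\ell=Z\neq X$, and $i\geq 1$ with $Z_{i-1}=X$). Set $Y=Z_i$. Then $z_i\in X\cap Y$ with $\low{z_i}=Y$, so $X\close_1 Y$ and $Y\in\mathcal{F}$, while the tail $Z_i,\ldots,Z_\ell$ and $z_{i+1},\ldots,z_\ell$ witness $Y\close_{\ell-i}Z$ with $\ell-i\leq k-1$, yielding $Z\in\calB_{k-1}(Y)$. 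Combining this with the inductive hypothesis produces
\[
|\calB_k(X)|\leq 1+\sum_{Y\in\mathcal{F}}|\calB_{k-1}(Y)|\leq 1+s(1+s+\cdots+s^{k-1})=1+s+s^2+\cdots+s^k.
\]

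I do not expect any serious obstacle. The only minor subtlety is that the initial portion of a witnessing sequence may degenerate (since $Z_i=Z_{i-1}=X$ is permitted whenever one chooses $z_i\in X$ with $\low{z_i}=X$), which is why one has to skip to the first bag genuinely distinct from $X$ before applying the $|\mathcal{F}|\leq s$ bound. Conceptually, the argument is a BFS-style count of bags reachable from $X$ in a directed graph where $Z\to Z'$ whenever $Z\close_1 Z'$ and $Z'\neq Z$, and the out-degree of every vertex is at most $s$.
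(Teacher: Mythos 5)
Your proof is correct and takes essentially the same approach as the paper: the adhesion bound shows each bag has at most $s$ proper $\close_1$-neighbors, and the geometric-series bound follows by a BFS-style induction. The paper phrases the induction as $|\calB_k(X)\setminus\calB_{k-1}(X)|\leq s^k$ (layer-by-layer), while you use the recursion $\calB_k(X)\subset\{X\}\cup\bigcup_{Y\in\mathcal{F}}\calB_{k-1}(Y)$; these are two equivalent ways of organizing the same count.
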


\begin{proof}
If $X$ is the root of $T$, then $\calB_k(X)=\{X\}$ and therefore $|\calB_k(X)|=1$ for every $k$.
Now, suppose $X$ is not the root of $T$.
We have $\calB_0(X)=\{X\}$ and therefore $|\calB_0(X)|=1$.
We have $\calB_1(X)\setminus\{X\}=\{\low{z}\colon z\in X$ and $\low{z}\neq X\}$.
If $z\in X$ and $\low{z}\neq X$, then $z\in X\cap Y$, where $Y$ is the bag directly following $X$ on the path in $T$ down to the root.
Since $T$ has adhesion at most $s$, we have $|X\cap Y|\leq s$.
Therefore, we have $|\calB_1(X)\setminus\{X\}|\leq s$ for every bag $X$ of $T$.
Straightforward induction yields $|\calB_k(X)\setminus\calB_{k-1}(X)|\leq s^k$ for $k\geq 1$, whence the lemma follows.
\end{proof}

For any bags $X$ and $Y$ of $T$ such that $\calB_h(X)\cap\calB_h(Y)\neq\emptyset$, let $A(X,Y)$ denote the highest bag in $\calB_h(X)\cap\calB_h(Y)$.

\begin{lemma}[cf.\ Lemma 5.3 in \cite{JMM+16}]
\label{lem:comp}
If\/ $x\leq_Py$, then\/ $\calB_{h-1}(\low{x})\cap\calB_{h-1}(\low{y})\neq\emptyset$, so\/ $A(\low{x},\low{y})$ is defined, and there is\/ $z\in A(\low{x},\low{y})$ such that\/ $x\leq_Pz\leq_Py$.
\end{lemma}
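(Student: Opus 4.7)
The plan is to choose a saturated chain $x = x_0 <_P x_1 <_P \cdots <_P x_\ell = y$ in $P$, so that $\ell \leq h-1$ (as $P$ has height at most $h$). For each $i$ let $T_i$ denote the subtree of $T$ formed by the bags containing $x_i$; since every cover edge $x_{i-1}x_i$ lies in some bag, $T_{i-1}\cap T_i\neq\emptyset$ for all $i$, and therefore $U:=T_0\cup\cdots\cup T_\ell$ is a connected subgraph of $T$, hence a subtree. Let $W$ be the unique $\leq$-minimum bag of $U$; since $W$ must also be the $\leq$-minimum of some $T_j$, we have $W=\low{x_j}$ for that $j$.

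For the nonemptiness claim, I would prove $\low{x_0}\close_\ell W$ (and symmetrically $\low{x_\ell}\close_\ell W$) as follows. Let $D_i$ be the $\leq$-minimum bag of $T_0\cup\cdots\cup T_i$, so $D_0=\low{x_0}$, $D_\ell=W$, and each $D_{i+1}$ equals either $D_i$ or $\low{x_{i+1}}$. In the strict drop case $D_{i+1}=\low{x_{i+1}}<D_i$, I realize $D_i\close_1 D_{i+1}$ via the witness $x_{i+1}$: picking any $C\in T_i\cap T_{i+1}$ (nonempty), we have $C\geq D_i$ and $C\geq\low{x_{i+1}}$, so $D_i$ lies on the $T$-path from $\low{x_{i+1}}$ up to $C$, and convexity of the subtree $T_{i+1}$ forces $D_i\in T_{i+1}$, i.e.\ $x_{i+1}\in D_i$. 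Composing at most $\ell$ such steps yields $\low{x_0}\close_\ell W$, and since $\ell\leq h-1$ this shows $W\in\calB_{h-1}(\low{x_0})\cap\calB_{h-1}(\low{x_\ell})$, so $A:=A(\low{x_0},\low{x_\ell})$ is defined.

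To exhibit $z\in A$ with $x\leq_P z\leq_P y$, let $M$ be the $\leq$-meet of $\low{x_0}$ and $\low{x_\ell}$ in $T$ (their lowest common ancestor). Since $A\leq\low{x_0}$ and $A\leq\low{x_\ell}$ we have $A\leq M$, and $A\geq W$ by maximality of $A$ in the intersection. The bags $M$ and $W$ both lie in $U$: $M$ because the $T$-path from $\low{x_0}$ to $\low{x_\ell}$ passes through $M$ and is contained in the subtree $U$, and $W$ by construction. Convexity of $U$ then puts the entire $T$-chain from $M$ down to $W$ inside $U$; since $W\leq A\leq M$, we conclude $A\in U$. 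Hence $A\in T_j$ for some $j$, so $x_j\in A$, and $z=x_j$ works because $x\leq_P x_j\leq_P y$.

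The main obstacle is verifying the single-step descent $D_i\close_1 D_{i+1}$: although $D_i$ is defined abstractly as a $\leq$-minimum of a union of subtrees, one must locate the chain vertex $x_{i+1}$ inside $D_i$, and this is exactly what the subtree-convexity argument via the common bag $C\in T_i\cap T_{i+1}$ supplies. Once this is granted, everything else reduces to routine properties of the $\close$ relation from Lemma~\ref{lem:close} and convexity of subtrees in the rooted tree $T$.
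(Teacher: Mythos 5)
Your proof is correct, and it reaches the same conclusion via a somewhat different technical route than the paper. Both arguments start from a sequence of elements witnessing the comparability $x\leq_Py$; you take a saturated chain $x=x_0<_P\cdots<_Px_\ell=y$ (so consecutive elements share a bag), whereas the paper takes a minimal-length sequence $z_0,\ldots,z_k$ satisfying the weaker condition $z_i\in\low{z_{i-1}}$ or $z_{i-1}\in\low{z_i}$. The key structural step differs: the paper exploits the \emph{minimality} of the sequence to show that the bags $\low{z_0},\ldots,\low{z_k}$ are ``valley-shaped'' (non-increasing then non-decreasing), and reads off the $\close$-chain directly from that. You instead track the running $\leq$-minimum $D_i$ of the prefix union $T_0\cup\cdots\cup T_i$ of subtrees, which is automatically non-increasing, and you recover each $\close_1$ step via the subtree-convexity argument locating $x_{i+1}\in D_i$. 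This sidesteps the paper's minimality/unimodality machinery entirely. For the second conclusion, you also argue differently: the paper produces $z_i\in A(\low{x},\low{y})$ by a pointer into the descending prefix of the valley, while you place $A(\low{x},\low{y})$ inside the subtree $U$ by sandwiching it between the meet $M$ and the valley bottom $W$ and using convexity of $U$. Both approaches bound the number of $\close$-steps by $\ell\leq h-1$ (or $k\leq h-1$) via the height assumption. Your version trades the paper's clever minimality trick for a more geometric argument about subtree convexity and prefix minima; the two are of comparable length and rigor, and either is a legitimate proof of the lemma.
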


\begin{proof}
Suppose $x\leq_Py$.
Let $z_0,\ldots,z_k$ be a shortest sequence of vertices of $P$ such that
\begin{itemize}
\item $x=z_0<_P\cdots<_Pz_k=y$,
\item $z_i\in\low{z_{i-1}}$ or $z_{i-1}\in\low{z_i}$ for $1\leq i\leq k$.
\end{itemize}
Such a sequence exists---a path in the cover graph of $P$ witnessing the comparability of $x$ and $y$ has these properties.
Since the height of $P$ is at most $h$, we have $k\leq h-1$.
Suppose there are indices $i$ and $j$ with $0<i\leq j<k$ such that $\low{z_{i-1}}<\low{z_i}=\cdots=\low{z_j}>\low{z_{j+1}}$.
Let $Z=\low{z_i}=\cdots=\low{z_j}$.
The second condition on the sequence $z_0,\ldots,z_k$ yields $z_{i-1},z_{j+1}\in Z$, which implies $z_{j+1}\in\low{z_{i-1}}$ or $z_{i-1}\in\low{z_{j+1}}$.
It follows that $z_0,\ldots,z_{i-1},z_{j+1},\ldots,z_k$ is a shorter sequence with the two properties, which contradicts the choice of $z_0,\ldots,z_k$.
Therefore, there is an index $j$ such that $\low{z_0}\geq\cdots\geq\low{z_j}\leq\cdots\leq\low{z_k}$.
It follows that $\low{z_j}\in\calB_{h-1}(\low{x})\cap\calB_{h-1}(\low{y})$.
Hence $\low{x}\geq A(\low{x},\low{y})\geq\low{z_j}$, so there is an index $i$ with $0\leq i\leq j$ such that $z_i\in A(\low{x},\low{y})$.
\end{proof}

Now, we are going to color incomparable pairs that are in a sense ``far away'' in $T$.
Let $\ell$ denote the depth-first search labeling of $T$ that visits the children of every node in the left-to-right order, and let $r$ denote the depth-first search labeling of $T$ that visits the children of every node in the right-to-left order, where a vertex receives its label when it is visited for the first time in the depth-first search.
Define the following four sets of incomparable pairs:
\begin{alignat*}{7}
I_1&=\{(x,y)\in\Inc(P)\colon&\ell(&\low{x})&&<{}&\ell&(Y)&&\text{ for every }&Y&\in\calB_{h-1}(\low{y})&&\},\\
I_2&=\{(x,y)\in\Inc(P)\colon&r(&\low{x})&&<{}&r&(Y)&&\text{ for every }&Y&\in\calB_{h-1}(\low{y})&&\},\\
I_3&=\{(x,y)\in\Inc(P)\colon&\ell(&\low{y})&&<{}&\ell&(X)&&\text{ for every }&X&\in\calB_{h-1}(\low{x})&&\},\\
I_4&=\{(x,y)\in\Inc(P)\colon&r(&\low{y})&&<{}&r&(X)&&\text{ for every }&X&\in\calB_{h-1}(\low{x})&&\}.
\end{alignat*}

\begin{lemma}
\label{lem:I_1-I_4}
None of the sets\/ $I_1$, $I_2$, $I_3$, $I_4$ contains an alternating cycle.
\end{lemma}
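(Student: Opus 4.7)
The plan is a proof by contradiction. Suppose one of the four sets, say $I_1$, contains an alternating cycle $\{(x_i,y_i):1\leq i\leq k\}$, so $x_i\leq_P y_{i+1}$ cyclically. I will use Lemma \ref{lem:comp} to extract, for each comparability $x_i\leq_P y_{i+1}$, a common bag in $\calB_{h-1}(\low{x_i})\cap\calB_{h-1}(\low{y_{i+1}})$, and then combine this bag with the defining inequality of $I_1$ and with the monotonicity of the DFS-preorder labeling $\ell$ along ancestor chains to force the values $\ell(\low{x_i})$ to decrease strictly around the cycle, which is clearly impossible.

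Concretely, for each $i$ fix a bag $W_{i+1}\in\calB_{h-1}(\low{x_i})\cap\calB_{h-1}(\low{y_{i+1}})$, guaranteed by Lemma \ref{lem:comp}. The inclusion $W_{i+1}\in\calB_{h-1}(\low{y_{i+1}})$ together with the assumption $(x_{i+1},y_{i+1})\in I_1$ yields
\[\ell(\low{x_{i+1}})<\ell(W_{i+1}).\]
On the other hand, $W_{i+1}\in\calB_{h-1}(\low{x_i})$ together with Lemma \ref{lem:close}\ref{item:close4} gives $W_{i+1}\leq\low{x_i}$ in the tree order, so $W_{i+1}$ is an ancestor of $\low{x_i}$ in the planted tree $T$; since $\ell$ is a DFS preorder, every ancestor has a smaller label, hence $\ell(W_{i+1})\leq\ell(\low{x_i})$. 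Chaining the two inequalities gives $\ell(\low{x_{i+1}})<\ell(\low{x_i})$ for every $i$, whence $\ell(\low{x_1})<\ell(\low{x_k})<\cdots<\ell(\low{x_1})$, a contradiction.

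The argument for $I_2$ is verbatim the same with $r$ in place of $\ell$; the only property of either labeling that is used is that ancestors get smaller labels, which is built into any DFS preorder regardless of the order in which children are visited. For $I_3$ and $I_4$ the roles of the first and second coordinates are swapped: from the same bag $W_{i+1}\in\calB_{h-1}(\low{x_i})\cap\calB_{h-1}(\low{y_{i+1}})$, the definition of $I_3$ (respectively $I_4$) applied to $(x_i,y_i)$ with the choice $X:=W_{i+1}\in\calB_{h-1}(\low{x_i})$ gives $\ell(\low{y_i})<\ell(W_{i+1})$ (respectively the analogous bound for $r$), while $W_{i+1}\in\calB_{h-1}(\low{y_{i+1}})$ again means $W_{i+1}$ is an ancestor of $\low{y_{i+1}}$, so $\ell(W_{i+1})\leq\ell(\low{y_{i+1}})$; this yields $\ell(\low{y_i})<\ell(\low{y_{i+1}})$ cyclically, which is impossible.

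I do not anticipate a serious obstacle. The only care needed is the index bookkeeping between the alternating-cycle hypothesis $x_i\leq_P y_{i+1}$ and the single-pair definition of each $I_j$, and choosing the correct side (first versus second coordinate) from which to pull the ancestor bag $W_{i+1}$. Once Lemma \ref{lem:comp} supplies a single bag lying simultaneously in the two $\calB_{h-1}$-families, the DFS monotonicity finishes the argument essentially by itself.
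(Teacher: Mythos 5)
Your proof is correct and follows essentially the same route as the paper: pick a bag common to $\calB_{h-1}(\low{x_i})$ and $\calB_{h-1}(\low{y_{i+1}})$ via Lemma~\ref{lem:comp}, apply the defining inequality of $I_j$ on one side and DFS-preorder monotonicity along ancestors on the other, and derive a strictly decreasing cyclic chain of labels. The only cosmetic difference is that the paper names the specific bag $A(\low{x_i},\low{y_{i+1}})$ while you use an arbitrary bag from the $\calB_{h-1}$-intersection supplied by Lemma~\ref{lem:comp}, which is logically interchangeable (and if anything slightly more transparent about why membership in $\calB_{h-1}$ holds).
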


\begin{proof}
Suppose there is an alternating cycle $\{(x_i,y_i)\colon 1\leq i\leq k\}\subseteq I_1$.
For every $i$, by Lemma \ref{lem:comp}, $A(\low{x_i},\low{y_{i+1}})$ is defined, and we have $\ell(\low{x_{i+1}})<\ell(A(\low{x_i},\low{y_{i+1}}))\leq\ell(\low{x_i})$, as $(x_{i+1},y_{i+1})\in I_1$.
This is a contradiction, as the vertex subscripts are assumed to go cyclically over $\{1,\ldots,k\}$.
Similar arguments show that there are no alternating cycles in $I_2$, $I_3$ and $I_4$.
\end{proof}

Let $I=\Inc(P)\setminus(I_1\cup I_2\cup I_3\cup I_4)$.
To complete the proof of Lemma \ref{lem:main}, it remains to construct a valid coloring of the incomparable pairs in $I$.

\begin{lemma}
\label{lem:I}
For every\/ $(x,y)\in I$, we have\/ $\calB_h(\low{x})\cap\calB_h(\low{y})\neq\emptyset$, so\/ $A(\low{x},\low{y})$ is defined.
\end{lemma}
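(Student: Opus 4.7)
We argue by cases on the tree-comparability of $X := \low{x}$ and $Y := \low{y}$ in $T$.

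\emph{Tree-comparable case.} WLOG $X \leq Y$ in $T$. Both $X$ and every bag of $\calB_{h-1}(Y)$ lie on the root-path from $Y$, along which the DFS label $\ell$ is strictly monotone. The condition $(x,y) \notin I_1$ supplies some $Y_1 \in \calB_{h-1}(Y)$ with $\ell(Y_1) \leq \ell(X)$, which by monotonicity forces $Y_1 \leq X$. Property~(5) of Lemma~\ref{lem:close} applied to $Y \close_{h-1} Y_1$ with $Y \geq X \geq Y_1$ yields $X \close_{h-1} Y_1$, so $Y_1 \in \calB_{h-1}(X) \cap \calB_{h-1}(Y) \subset \calB_h(X) \cap \calB_h(Y)$. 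The case $Y \leq X$ is handled symmetrically using $I_3$.

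\emph{Tree-incomparable case.} Let $W$ be the LCA of $X$ and $Y$. Assume WLOG that $X$ lies to the left of $Y$ in the planted tree (the opposite orientation swaps $I_1, I_4$ with $I_2, I_3$). A positional argument with DFS labels extracts from $(x,y) \notin I_1$ some $Y_1 \in \calB_{h-1}(Y)$ with $Y_1 \leq W$ (because any $Y' \in \calB_{h-1}(Y)$ strictly above $W$ lies in $Y$'s subtree of $W$, giving $\ell(Y') > \ell(X)$), and from $(x,y) \notin I_4$ some $X_4 \in \calB_{h-1}(X)$ with $X_4 \leq W$; conditions $I_2, I_3$ are automatic.

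Both $X_4$ and $Y_1$ lie on the common root-tail from $W$ and are tree-comparable; WLOG $X_4 \leq Y_1$. Fix a sequence $X = Z^X_0 > \cdots > Z^X_\ell = X_4$ realizing $X \close_{h-1} X_4$. Since $Y_1$ lies on the tree-path from $X$ to $X_4$ (between $W$ and $X_4$), either the sequence hits $Y_1$ at some $Z^X_j$---whence $Y_1 \in \calB_{h-1}(X) \cap \calB_{h-1}(Y)$ and we are done---or it jumps strictly over $Y_1$ at some step $j$ with $Z^X_{j-1} > Y_1 > Z^X_j$. The main technical step, and what I expect to be the critical observation, is that in the jumping case the witness $z^X_j$ has a connected bag-subtree in $T$ spanning $Z^X_{j-1}$ down to $Z^X_j$ and thus containing $Y_1$, so $z^X_j \in Y_1 \cap Z^X_j$ with $\low{z^X_j} = Z^X_j$; this gives $Y_1 \close_1 Z^X_j$, and composing with $Y \close_{h-1} Y_1$ via property~(3) of Lemma~\ref{lem:close} yields $Y \close_h Z^X_j$. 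Since also $Z^X_j \in \calB_j(X) \subset \calB_{h-1}(X)$ (reached in the first $j \leq \ell \leq h-1$ steps), the bag $Z^X_j$ lies in $\calB_h(X) \cap \calB_h(Y)$. The sub-case $Y_1 \leq X_4$ is symmetric, using the $Y$-sequence to $Y_1$ and the witness jumping over $X_4$ to obtain $X_4 \close_1 Y'_j$ and hence $X \close_h Y'_j$. The ``jumping-witness'' step is precisely what supplies the single extra $\close_1$ move that turns the $h-1$-step budget of the $I$-conditions into the $h$-step budget of $\calB_h$.
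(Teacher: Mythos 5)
Your proof is correct and follows essentially the same approach as the paper: use the DFS-label conditions to locate bags $Y_1\in\calB_{h-1}(\low{y})$ and $X_4\in\calB_{h-1}(\low{x})$ that are common ancestors of both $\low{x}$ and $\low{y}$, then apply the ``jumping-witness'' observation (the bag-connectivity of $z^X_j$) to convert the $(h-1)$-budget into the $h$-budget. The only difference is organizational: the paper avoids the explicit tree-comparable/incomparable split by taking $Y=\min\{Y_1,Y_2\}$ (using both $I_1$ and $I_2$, and symmetrically $I_3,I_4$), which forces $\low{x}\geq Y$ directly, and it phrases the jump via ``the highest bag $X'\in\calB_{h-1}(\low{y})$ with $X'\leq X$ satisfies $X\close_1 X'$''---the same connectivity argument you spell out.
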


\begin{proof}
Let $(x,y)\in I$.
Since $(x,y)\notin I_1\cup I_2$, there are $Y_1,Y_2\in\calB_{h-1}(\low{y})$ such that $\ell(\low{x})\geq\ell(Y_1)$ and $r(\low{x})\geq r(Y_2)$.
We have $Y_1\leq Y_2$ or $Y_1\geq Y_2$.
Let $Y=\min\{Y_1,Y_2\}\in\calB_{h-1}(\low{y})$.
It follows that $\ell(\low{x})\geq\ell(Y)$ and $r(\low{x})\geq r(Y)$.
This implies $\low{x}\geq Y$.
Similarly, there is $X\in\calB_{h-1}(\low{x})$ such that $\low{y}\geq X$.
Since $\low{x},\low{y}\geq X,Y$, we have $X\leq Y$ or $X\geq Y$.
If $X\geq Y$, then the highest bag $X'\in\calB_{h-1}(\low{y})$ with $X\geq X'$ satisfies $X\close_1X'$ and hence $\low{x}\close_hX'$.
Similarly, if $Y\geq X$, then $\low{y}\close_hY'$.
In either case, we conclude that $\calB_h(\low{x})\cap\calB_h(\low{y})\neq\emptyset$.
\end{proof}

Lemma \ref{lem:B-size} implies that there is a coloring $\phi$ of the bags of $T$ with at most $1+s+\cdots+s^{2h}$ colors such that no two bags in the relation $\close_{2h}$ have the same color.
We can construct such a coloring greedily by processing the bags bottom-up and letting each bag $X$ be assigned a color that has not been used on the bags in $\calB_{2h}(X)\setminus\{X\}$.
From now on, the \emph{color} of a bag will always refer to the color assigned to it by $\phi$.
The following asserts that the color assigned by $\phi$ uniquely determines a member of $\calB_{2h}(X)$.

\begin{lemma}[cf.\ Lemma 5.4 in \cite{JMM+16}]
\label{lem:phi}
If\/ $X\close_{2h}Y$, $X\close_{2h}Z$, and\/ $\phi(Y)=\phi(Z)$, then\/ $Y=Z$.
\end{lemma}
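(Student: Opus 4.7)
The plan is very short: this lemma is a direct consequence of Lemma~\ref{lem:close}(6) together with the defining property of the greedy coloring $\phi$. I would expect no real obstacle here; the lemma is essentially a reformulation of how $\phi$ was constructed.

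First, I would apply Lemma~\ref{lem:close}(6) with $k=2h$ to the hypotheses $X\close_{2h}Y$ and $X\close_{2h}Z$. This yields $Y\close_{2h}Z$ or $Z\close_{2h}Y$; by symmetry (interchanging the roles of $Y$ and $Z$) it suffices to handle the first case, so I would assume $Y\close_{2h}Z$. By definition of $\calB_{2h}$, this means $Z\in\calB_{2h}(Y)$, and by Lemma~\ref{lem:close}(4) we have $Y\geq Z$, so $Z$ is processed no later than $Y$ in the bottom-up procedure that constructs $\phi$.

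Next, I would invoke the defining property of $\phi$: when the bag $Y$ was colored, its color was chosen to differ from the colors of all bags in $\calB_{2h}(Y)\setminus\{Y\}$. Assuming for contradiction that $Y\neq Z$, we would have $Z\in\calB_{2h}(Y)\setminus\{Y\}$, and hence $\phi(Y)\neq\phi(Z)$, contradicting the assumption $\phi(Y)=\phi(Z)$. Therefore $Y=Z$, as required.
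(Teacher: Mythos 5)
Your proof is correct and takes essentially the same approach as the paper: both apply Lemma~\ref{lem:close}(6) to obtain $Y\close_{2h}Z$ or $Z\close_{2h}Y$, and then invoke the defining greedy property of $\phi$ to conclude $\phi(Y)\neq\phi(Z)$ if $Y\neq Z$. You simply spell out the bottom-up processing order argument that the paper leaves implicit in the phrase ``the definition of $\phi$ yields $\phi(Y)\neq\phi(Z)$.''
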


\begin{proof}
Suppose $Y\neq Z$.
Since $X\close_{2h}Y$ and $X\close_{2h}Z$, we have $Y\close_{2h}Z$ or $Z\close_{2h}Y$.
Whichever of these holds, the definition of $\phi$ yields $\phi(Y)\neq\phi(Z)$.
\end{proof}

Let $\tau(X)$ denote the sequence of the colors of the members of $\calB_h(X)$ listed in the decreasing (top-to-bottom) order of $T$.
In particular, $\phi(X)$ occurs first in $\tau(X)$.
For two such sequences $\tau_1$ and $\tau_2$, let $\tau_1\oplus\tau_2$ denote the sequence obtained from $\tau_1$ and $\tau_2$ by the following procedure:
\begin{itemize}
\item remove from $\tau_1$ all colors that do not occur in $\tau_2$, thus obtaining $\tau_1'$;
\item remove from $\tau_2$ all colors that do not occur in $\tau_1$, thus obtaining $\tau_2'$;
\item let $\tau_1\oplus\tau_2$ be the longest common suffix (trailing segment) of $\tau_1'$ and $\tau_2'$.
\end{itemize}

\begin{lemma}
\label{lem:tau}
For any two bags\/ $X$ and\/ $Y$, the sequence of the colors of the members of\/ $\calB_h(X)\cap\calB_h(Y)$ listed in the decreasing (top-to-bottom) order of\/ $T$ is a suffix of\/ $\tau(X)\oplus\tau(Y)$.
\end{lemma}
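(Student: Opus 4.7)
Let $A_1>A_2>\cdots>A_p$ enumerate $\calB_h(X)\cap\calB_h(Y)$ in decreasing (top-to-bottom) order. The plan is to prove that $(\phi(A_1),\ldots,\phi(A_p))$ is a common suffix of $\tau_1'$ and $\tau_2'$; since $\tau(X)\oplus\tau(Y)$ is by definition their longest common suffix, the lemma then follows. Each $\phi(A_j)$ clearly survives the filtration producing $\tau_1'$ and $\tau_2'$ because $A_j$ lies in both $\calB_h(X)$ and $\calB_h(Y)$, and the $\phi(A_j)$'s appear in the claimed order inside each filtered sequence. The substance of the argument is therefore to show that no \emph{additional} entries of $\tau(X)$ at positions at or below $A_1$ survive into $\tau_1'$ (and symmetrically for $\tau_2'$), so that the last $p$ entries of each are exactly $\phi(A_1),\ldots,\phi(A_p)$.

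Concretely, I would establish the following claim: if $Z\in\calB_h(X)$ with $Z\leq A_1$ satisfies $\phi(Z)=\phi(W)$ for some $W\in\calB_h(Y)$, then $Z\in\{A_1,\ldots,A_p\}$. First, since $A_1\in\calB_h(X)$ gives $X\geq A_1$ by Lemma~\ref{lem:close}(4), Lemma~\ref{lem:close}(5) applied to $X\close_h Z$ and $X\geq A_1\geq Z$ yields $A_1\close_h Z$. Next, $W,A_1\in\calB_h(Y)$ together with Lemma~\ref{lem:close}(6) gives either $A_1\close_h W$ (i.e.\ $W\leq A_1$) or $W\close_h A_1$ (i.e.\ $W>A_1$), where the direction is read off from Lemma~\ref{lem:close}(4). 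These are the two cases of the argument.

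In the first case, both $Z$ and $W$ lie in $\calB_h(A_1)\subset\calB_{2h}(A_1)$; Lemma~\ref{lem:phi} applied with anchor $A_1$ and $\phi(Z)=\phi(W)$ forces $Z=W$, placing $Z$ in $\calB_h(X)\cap\calB_h(Y)=\{A_1,\ldots,A_p\}$. In the second case, chaining $W\close_h A_1\close_h Z$ via Lemma~\ref{lem:close}(3) yields $W\close_{2h}Z$; together with the trivial $W\close_{2h}W$, Lemma~\ref{lem:phi} applied with anchor $W$ forces $Z=W$, contradicting $Z\leq A_1<W$. Hence the second case is vacuous, the claim holds, and the symmetric statement for $\tau_2'$ follows by swapping the roles of $X$ and $Y$.

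The step I expect to require the most insight is the treatment of the case $W>A_1$: the natural instinct is to look for a common ancestor on the $X$-side to serve as the anchor for Lemma~\ref{lem:phi}, but the correct choice is $W$ itself, exploiting the asymmetry of $\close_h$ captured by Lemma~\ref{lem:close}(4). Once this observation is in place, the rest is a careful bookkeeping of which positions of $\tau(X)$ and $\tau(Y)$ survive the filtration producing $\tau_1'$ and $\tau_2'$, relying on the closure calculus of Lemma~\ref{lem:close} and the color-uniqueness under $\close_{2h}$ provided by Lemma~\ref{lem:phi}.
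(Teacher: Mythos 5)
Your argument is correct and uses the same essential machinery as the paper: the chaining properties of $\close_h$ from Lemma \ref{lem:close} and color-uniqueness under $\close_{2h}$ from Lemma \ref{lem:phi}, applied to show that the entries of $\tau(X)$ below the top of the intersection that survive into $\tau_1'$ (and symmetrically for $\tau_2'$) are exactly the colors of $\calB_h(X)\cap\calB_h(Y)$. The paper's version is slightly more streamlined in that it keeps $X$ and $Y$ themselves as the anchors for Lemma \ref{lem:phi} (so that $Z'\in\calB_{2h}(X)\cap\calB_{2h}(Y)$ immediately gives uniqueness of its color in $\calB_{2h}(X)\cup\calB_{2h}(Y)$ without a case split), which makes your two-case analysis on whether $W\leq A_1$ or $W>A_1$ unnecessary, but this is a cosmetic difference rather than a different route.
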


\begin{proof}
Let $Z\in\calB_h(X)\cap\calB_h(Y)$.
The color $\phi(Z)$ occurs in both $\tau(X)$ and $\tau(Y)$.
Suppose $Z'$ is a bag in $\calB_h(X)$ or $\calB_h(Y)$ such that $Z>Z'$.
It follows that $Z\close_hZ'$, so $Z'\in\calB_{2h}(X)\cap\calB_{2h}(Y)$.
By Lemma \ref{lem:phi}, no member of $\calB_{2h}(X)\cup\calB_{2h}(Y)$ other than $Z'$ can have color $\phi(Z')$.
Therefore,
\begin{itemize}
\item if $Z'\in\calB_h(X)\cap\calB_h(Y)$, then $\phi(Z')$ occurs after $\phi(Z)$ in both $\tau(X)$ and $\tau(Y)$;
\item if $Z'\in\calB_h(X)\setminus\calB_h(Y)$, then $\phi(Z')$ does not occur in $\tau(Y)$;
\item if $Z'\in\calB_h(Y)\setminus\calB_h(X)$, then $\phi(Z')$ does not occur in $\tau(X)$.
\end{itemize}
This shows that the colors of the bags in $\calB_h(X)\cap\calB_h(Y)$ listed in the decreasing order of $T$ indeed form a suffix of $\tau(X)\oplus\tau(Y)$.
\end{proof}

Fix an arbitrary strict total order $\prec$ on the bags of $T$.
For any bags $X$, $Y$, $Z$ of $T$ such that $Z\in\calB_h(X)\cap\calB_h(Y)$, let $p_Z(X,Y)$ denote the color preceding $\phi(Z)$ in $\tau(X)\oplus\tau(Y)$ or a special value $\bot$ if $\phi(Z)$ is the first color in $\tau(X)\oplus\tau(Y)$, and define
\begin{equation*}
t_Z(X,Y)=\begin{cases}
1&\begin{minipage}[t]{\widthof{if $p_Z(X,Y)\neq\bot$ and $X'\prec Y'$, where $X'$ and $Y'$ are the unique bags such}}
if $p_Z(X,Y)\neq\bot$ and $X'\prec Y'$, where $X'$ and $Y'$ are the unique bags such that $X\close_hX'>Z$, $Y\close_hY'>Z$, and $\phi(X')=\phi(Y')=p_Z(X,Y)$,\strut
\end{minipage}\\
2&\text{if $p_Z(X,Y)\neq\bot$ and $X'\succ Y'$, where $X'$ and $Y'$ are as above,}\\
3&\text{if $p_Z(X,Y)=\bot$ or $X'=Y'$, where $X'$ and $Y'$ are as above.}
\end{cases}\\
\end{equation*}

Let $Z$ be a bag of $T$.
For every incomparable pair $(x,y)$ of $P_Z$, let $\sigma_Z(x,y)$ denote the color of $(x,y)$ in a valid coloring of $\Inc(P_Z)$ with colors $1,\ldots,d$.
Similarly, for every incomparable pair $(x,y)$ of $P'_Z$, let $\sigma'_Z(x,y)$ denote the color of $(x,y)$ in a valid coloring of $\Inc(P_Z)$ with colors $1,\ldots,d$.
Recall that every gadget in $P_Z$ and $P'_Z$ has at most $2^s$ vertices of the form $x_{Z,K,S}$.
Let every such vertex be assigned an identifier $\id_Z(x_{Z,K,S})\in\{1,\ldots,2^s\}$ so that for every adhesion set $K$ of $Z$, the mapping $x_{Z,K,S}\mapsto\id_Z(x_{Z,K,S})$ (with variable $S$) is injective.
For any $x,y\in P$, define
\begin{equation*}
\pi_Z(x,y)=\begin{cases}
1&\text{if $x,y\notin Z$ and $K_Z(x)=K_Z(y)$,}\\
2&\text{otherwise.}
\end{cases}
\end{equation*}

Now, for every $(x,y)\in I$, let the \emph{signature} $\Sigma(x,y)$ of $(x,y)$ comprise the following information:
\begin{enumerates}
\newcommand\mappingpair[1]{\makebox[\widthof{$\phi(Z)\mapsto\sigma_Z(\mu_Z(x),\nu_Z(y))$}][l]{$\phi(Z)\mapsto #1$}}
\item\label{sig:tau}    the pair of sequences $(\tau(\low{x}),\tau(\low{y}))$,
\item\label{sig:sigma}  the mapping (set of pairs) \mappingpair{\sigma_Z(\mu_Z(x),\nu_Z(y))}  for $Z\in\calB_h(\low{x})\cap\calB_h(\low{y})$,
\item\label{sig:sigma'} the mapping (set of pairs) \mappingpair{\sigma'_Z(\mu_Z(x),\nu_Z(y))} for $Z\in\calB_h(\low{x})\cap\calB_h(\low{y})$,
\item\label{sig:pi}     the mapping (set of pairs) \mappingpair{\pi_Z(x,y)}                   for $Z\in\calB_h(\low{x})\cap\calB_h(\low{y})$,
\item\label{sig:t}      the mapping (set of pairs) \mappingpair{t_Z(\low{x},\low{y})}         for $Z\in\calB_h(\low{x})\cap\calB_h(\low{y})$,
\item\label{sig:id}     the mapping (set of pairs) \mappingpair{\id_Z(\mu_Z(x))}              for $Z\in\calB_h(\low{x})\cap\calB_h(\low{y})\setminus\{\low{x}\}$.
\end{enumerates}
It follows from Lemma \ref{lem:phi} that every color in the codomain of $\phi$ is assigned at most one value by each of the mappings \ref{sig:sigma}--\ref{sig:id}.
It is clear that the number of distinct signatures is bounded in terms of $h$, $s$ and $d$.
We use $\Sigma(x,y)$ as the color of $(x,y)$ in the requested coloring of $I$ with a bounded number of colors.
To complete the proof of Lemma \ref{lem:main}, it remains to prove the following.

\begin{lemma}
\label{lem:signature}
For any\/ $\Sigma$, the set\/ $\{(x,y)\in I\colon\Sigma(x,y)=\Sigma\}$ contains no alternating cycle.
\end{lemma}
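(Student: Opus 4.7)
The plan is to argue by contradiction. Suppose $\{(x_i,y_i)\colon 1\leq i\leq k\}\subset I$ is an alternating cycle whose members all share a common signature $\Sigma$. The goal is to locate a single bag $Z^*$ of $T$ such that the images $(\mu_{Z^*}(x_i),\nu_{Z^*}(y_i))$ form a monochromatic alternating cycle in $\Inc(P_{Z^*})$ or in $\Inc(P'_{Z^*})$, contradicting validity of $\sigma_{Z^*}$ or $\sigma'_{Z^*}$.

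First I unpack $\Sigma$. Item \ref{sig:tau} yields $\tau_x:=\tau(\low{x_i})$ and $\tau_y:=\tau(\low{y_i})$ independent of $i$, and by Lemma \ref{lem:tau} the color sequence of $C_i:=\calB_h(\low{x_i})\cap\calB_h(\low{y_i})$ is a suffix of $\tau_x\oplus\tau_y$; since the mappings in items \ref{sig:sigma}--\ref{sig:id} agree on their key-sets across $i$, this suffix is itself independent of $i$. Lemma \ref{lem:phi} then provides, for each color $c$ in that suffix and each $i$, a unique bag $Z_i(c)\in C_i$ with $\phi(Z_i(c))=c$.

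The key step is to show that the bags $A_i:=A(\low{x_i},\low{y_{i+1}})$ supplied by Lemma \ref{lem:comp}, together with witnesses $w_i\in A_i$ satisfying $x_i\leq_P w_i\leq_P y_{i+1}$, all coincide in a single bag $Z^*$. I propose an induction along the common suffix from its top color downward, each step combining Lemma \ref{lem:phi} with the tie-breaker $t_Z$ from item \ref{sig:t}: the value $t_Z$ records whether the bags of $\calB_h(\low{x_i})$ and $\calB_h(\low{y_{i+1}})$ just above $Z_i(c)$ coincide or are $\prec$-ordered one way or the other, and equality of $t_Z$ across $i$ prevents the tree-paths from diverging. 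The identifier map $\id_Z$ from item \ref{sig:id} analogously disambiguates the $K$-up-set of $x_i$ within each gadget. Once $A_1=\cdots=A_k=:Z^*$ is established, cyclicity $Z^*=A_{i-1}\in\calB_h(\low{y_i})$ gives $Z^*\in C_i$ for every $i$, so the entries of $\Sigma$ indexed by $\phi(Z^*)$ apply to our chosen bag.

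To finish, item \ref{sig:pi} forces $\pi_{Z^*}(x_i,y_i)$ to be a common value $\pi\in\{1,2\}$. If $\pi=2$, then Lemma \ref{lem:aux}\ref{item:aux2} gives $(\mu_{Z^*}(x_i),\nu_{Z^*}(y_i))\in\Inc(P'_{Z^*})$, Lemma \ref{lem:aux}\ref{item:aux4} gives the chain relations $\mu_{Z^*}(x_i)\leq_{P'_{Z^*}}\nu_{Z^*}(y_{i+1})$, and item \ref{sig:sigma'} makes all these pairs share a single color under $\sigma'_{Z^*}$, contradicting its validity. If $\pi=1$, the argument runs in $P_{Z^*}$ instead: Lemma \ref{lem:aux}\ref{item:aux1} gives incomparability, Lemma \ref{lem:aux}\ref{item:aux3} with $w_i\in Z^*$ gives the chain, and item \ref{sig:sigma} supplies the common color, again contradicting validity. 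The main obstacle is the third paragraph: converting combinatorial agreement of signatures into an actual coincidence of the bags $A_i$. Items \ref{sig:t} and \ref{sig:id} were included in $\Sigma$ precisely to support this propagation, but carrying it out demands a careful analysis of how the upward tree-paths from $\low{x_i}$ and $\low{y_{i+1}}$ interact under the constraints imposed by Lemmas \ref{lem:close}, \ref{lem:phi}, and \ref{lem:tau}.
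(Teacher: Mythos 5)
The final step of your plan is right: once you have a bag $Z^*$ lying in every $\calB_h(\low{x_i})\cap\calB_h(\low{y_i})$ for which the images of the cycle stay comparable and incomparable in the appropriate gadget extension, validity of $\sigma_{Z^*}$ or $\sigma'_{Z^*}$ finishes the argument. But the middle step — establishing $A_1=\cdots=A_k=:Z^*$ — is not achievable and is not what the proof needs. Nothing in the signature forces the bags $A(\low{x_i},\low{y_{i+1}})$ to coincide: the sets $\calB_h(\low{x_i})\cap\calB_h(\low{y_{i+1}})$ for different $i$ are all suffixes of $\tau_x\oplus\tau_y$, but they can be suffixes of different lengths, so the highest bags in them can have different colors and hence be different bags. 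What the paper instead extracts is the \emph{highest bag $Z$ lying at or below all} $A(\low{x_i},\low{y_i})$ and $A(\low{x_i},\low{y_{i+1}})$; one then proves $Z\in\calB_h(\low{x_i})\cap\calB_h(\low{y_i})$ for every $i$ by a propagation argument alternating between $\close_h$ to $A(\low{x_i},\low{y_i})$ and $\close_h$ to $A(\low{x_i},\low{y_{i+1}})$ together with Lemma~\ref{lem:phi}. Crucially, $Z$ need not equal any $A_i$, so in Case 1 the chain relation $x_i\leq_P y_{i+1}$ may or may not have a witness inside $Z$, and the proof has to handle both possibilities.

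Your proposal also omits the minimality of the alternating cycle, which the paper uses to establish the dichotomy you would otherwise have to live without: in Case 1 ($\pi_Z=1$ throughout), the identifier $\id_Z$ combined with minimality shows that \emph{either every} $i$ has a witness $z\in Z$ with $x_i\leq_P z\leq_P y_{i+1}$ \emph{or none does} (if some do and some don't, the cycle can be shortcut). Only in the ``all'' subcase does the argument transfer to $P_Z$ as you describe. In the ``none'' subcase the bags $A(\low{x_i},\low{y_{i+1}})$ are all strictly above $Z$; this is precisely where $t_Z$ is used, not to prevent the tree paths from diverging (they genuinely do diverge), but to detect that the divergent bags $Z'_i$ form a cyclically $\prec$-monotone sequence, which is impossible. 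In short, you have correctly identified the roles of $\pi_Z$, $\sigma_Z$, $\sigma'_Z$, and the maps $\mu_Z,\nu_Z$, but the reduction to a single bag goes through a meet rather than an equality, the cycle must be taken minimal, and the case where no witness falls into $Z$ requires the cyclic $t_Z$ argument that your sketch does not supply.
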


\begin{proof}
Suppose to the contrary that there is an alternating cycle $\{(x_i,y_i)\colon 1\leq i\leq k\}\subseteq I$ such that $\Sigma(x_i,y_i)=\Sigma$ for every $i$.
Assume without loss of generality that it is a shortest alternating cycle with this property.
For the rest of the proof, all the subscripts in the notation of vertices of the alternating cycle are assumed to go cyclically over $\{1,\ldots,k\}$.
By Lemma \ref{lem:comp} and Lemma \ref{lem:I}, $A(\low{x_i},\low{y_i})$ and $A(\low{x_i},\low{y_{i+1}})$ are defined for every $i$.
Let $Z$ be the highest bag such that $Z\leq A(\low{x_i},\low{y_i})$ and $Z\leq A(\low{x_i},\low{y_{i+1}})$ for every $i$.

Suppose $Z<A(\low{x_i},\low{y_i})$ and $Z<A(\low{x_i},\low{y_{i+1}})$ for every $i$.
It follows that
\begin{itemize}
\item if $Z<Z'\leq\low{y_i}$ and $ZZ'\in E(T)$, then $Z'\leq A(\low{x_i},\low{y_i})\leq\low{x_i}$,
\item if $Z<Z'\leq\low{x_i}$ and $ZZ'\in E(T)$, then $Z'\leq A(\low{x_i},\low{y_{i+1}})\leq\low{y_{i+1}}$.
\end{itemize}
Straightforward induction along the alternating cycle yields $Z'\leq\low{x_i}$ and $Z'\leq\low{y_i}$ for a unique bag $Z'$ with $Z<Z'$ and $ZZ'\in E(T)$.
This contradicts the choice of $Z$.
Therefore, we have $Z=A(\low{x_i},\low{y_i})$ or $Z=A(\low{x_i},\low{y_{i+1}})$ for some $i$.
In particular, we have $\low{x_{i_1}}\close_hZ$ for some $i_1$ and $\low{y_{i_2}}\close_hZ$ for some $i_2$.
For every $i$, since $\Sigma(\low{x_i},\low{y_i})=\Sigma=\Sigma(\low{x_{i_1}},\low{y_{i_1}})=\Sigma(\low{x_{i_2}},\low{y_{i_2}})$, we have $\tau(\low{x_i})=\tau(\low{x_{i_1}})$ and $\tau(\low{y_i})=\tau(\low{y_{i_2}})$ (cf.\ \ref{sig:tau}), so $\phi(Z)$ occurs in both $\tau(\low{x_i})$ and $\tau(\low{y_i})$.

Suppose $\low{y_i}\close_hZ$.
Since $\low{y_i}\close_hA(\low{x_i},\low{y_i})$ and $\low{y_i}\geq A(\low{x_i},\low{y_i})\geq Z$, Lemma \ref{lem:close} \ref{item:close5} yields $A(\low{x_i},\low{y_i})\close_hZ$ and hence $\low{x_i}\close_{2h}Z$.
Since $\phi(Z)$ occurs in $\tau(\low{x_i})$, there is a bag $Z'$ with $\low{x_i}\close_hZ'$ and $\phi(Z')=\phi(Z)$.
Lemma \ref{lem:phi} yields $Z'=Z$, so $\low{x_i}\close_hZ$.
The same argument shows that $\low{x_i}\close_hZ$ implies $\low{y_{i+1}}\close_hZ$.
Straightforward induction yields $\low{x_i}\close_hZ$ and $\low{y_i}\close_hZ$ for every $i$.

In the rest of the proof, we will make essential use of the fact that the information assigned to $\phi(Z)$ by each of the mappings \ref{sig:sigma}--\ref{sig:id} saved in $\Sigma(x_i,y_i)$ is the same for every $i$.
In particular, $\pi_Z(x_i,y_i)$ is the same for every $i$ (cf.\ \ref{sig:pi}).

\emph{Case 1.} $\pi_Z(x_i,y_i)=1$ for every $i$.
It follows that $\low{x_i},\low{y_i}>Z$ and $K_Z(x_i)=K_Z(y_i)$ for every $i$.
Suppose that for some $i$, there is $z\in Z$ with $x_i\leq_Pz\leq_Py_{i+1}$, but there is no $z'\in Z$ with $x_{i-1}\leq_Pz'\leq_Py_i$.
Lemma \ref{lem:comp} yields $A(\low{x_{i-1}},\low{y_i})>Z$ and thus $K_Z(x_{i-1})=K_Z(y_i)=K_Z(x_i)$.
Hence $\mu_Z(x_{i-1})=x_{Z,K,S}$ and $\mu_Z(x_i)=x_{Z,K,S'}$, where $K=K_Z(x_{i-1})=K_Z(x_i)$, $S=Z\cap\upset_Px_{i-1}$, and $S'=Z\cap\upset_Px_i$.
This and $\id_Z(\mu_Z(x_{i-1}))=\id_Z(\mu_Z(x_i))$ (cf.\ \ref{sig:id}) yield $\id_Z(x_{Z,K,S})=\id_Z(x_{Z,K,S'})$ and hence $S=S'$.
Since $x_i\leq_Pz$, we have $z\in S'=S$, so $x_{i-1}\leq_Pz\leq_Py_{i+1}$.
This implies that $k>2$ and $\{(x_j,y_j)\colon 1\leq j\leq k$ and $j\neq i\}$ is a shorter alternating cycle in $I$, a contradiction.
Therefore, there is $z\in Z$ with $x_i\leq_Pz\leq_Py_{i+1}$ either for every $i$ or for no $i$.

\emph{Subcase 1.1.} For every $i$, there is $z\in Z$ with $x_i\leq_Pz\leq_Py_{i+1}$.
By Lemma \ref{lem:aux} \ref{item:aux1}, we have $(\mu_Z(x_i),\nu_Z(y_i))\in\Inc(P_Z)$ for every $i$.
By Lemma \ref{lem:aux} \ref{item:aux3}, we have $\mu_Z(x_i)\leq_{P_Z}\nu_Z(y_{i+1})$ for every $i$.
Hence $\{(\mu_Z(x_i),\nu_Z(y_i))\colon 1\leq i\leq k\}$ is an alternating cycle in $\Inc(P_Z)$ (see the left side of Figure~\ref{fig:gadget}).
However, it is monochromatic in $\sigma_Z$ (cf.\ \ref{sig:sigma}).
This contradicts the assumption that $\sigma_Z$ is a valid coloring of $\Inc(P_Z)$.

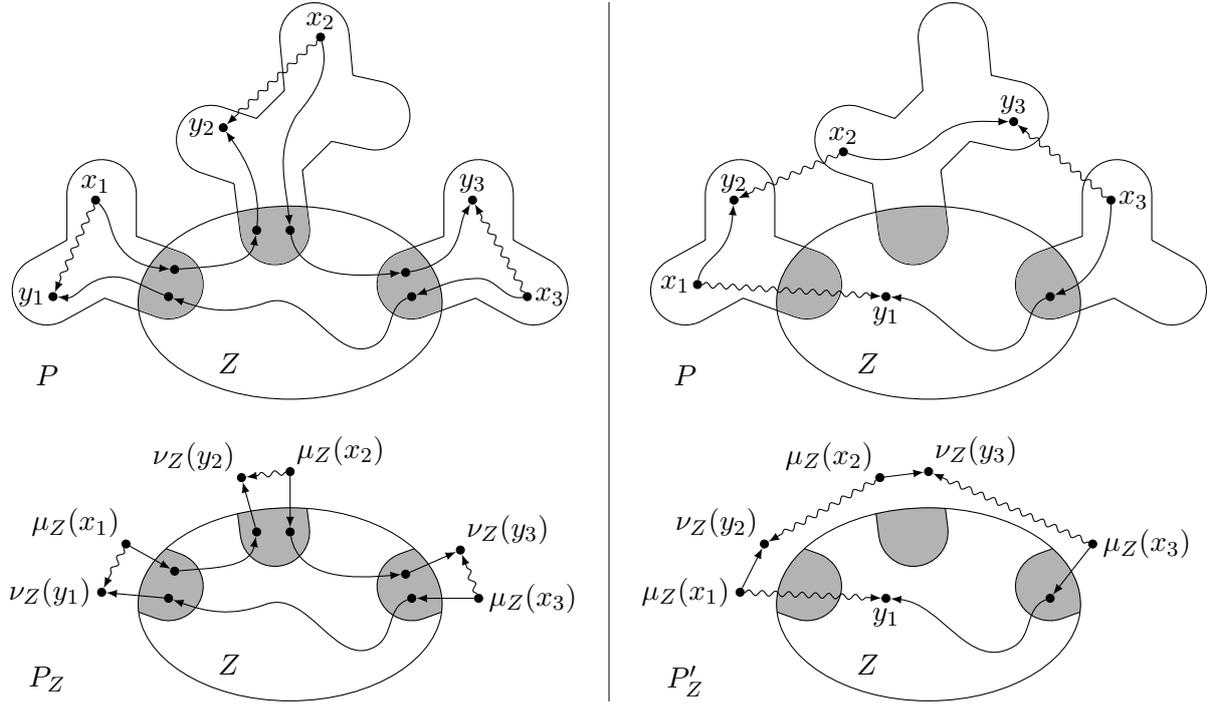
\begin{figure}[t]
\centering
\begin{tikzpicture}[scale=.8,>=latex]
\begin{scope}
  \coordinate (a1) at (-2,0.3);
  \coordinate (a2) at (-3.1,0.7);
  \coordinate (a3) at (-4,0.2);
  \coordinate (a4) at (-3.1,1.8);
  \coordinate (b1) at (-0.25,1.2);
  \coordinate (b2) at (-0.4,2.4);
  \coordinate (b3) at (-1.3,2.7);
  \coordinate (b4) at (0.5,3.3);
  \coordinate (b5) at (0.4,4.4);
  \coordinate (b6) at (1.4,3.1);
  \coordinate (c1) at (2,0.3);
  \coordinate (c2) at (3.1,0.7);
  \coordinate (c3) at (4,0.2);
  \coordinate (c4) at (3.1,1.8);
  \draw[line width=26.4pt,line cap=round] (a1)--(a2)--(a3) (a2)--(a4);
  \draw[line width=26.4pt,line cap=round] (b1)--(b2)--(b3) (b2)--(b4)--(b5) (b4)--(b6);
  \draw[line width=26.4pt,line cap=round] (c1)--(c2)--(c3) (c2)--(c4);
  \draw[white,line width=25.6pt,line cap=round] (a1)--(a2)--(a3) (a2)--(a4);
  \draw[white,line width=25.6pt,line cap=round] (b1)--(b2)--(b3) (b2)--(b4)--(b5) (b4)--(b6);
  \draw[white,line width=25.6pt,line cap=round] (c1)--(c2)--(c3) (c2)--(c4);
  \begin{scope}
  \clip (0,0) ellipse (2.5cm and 1.6cm);
  \draw[black!30,line width=25.6pt,line cap=round] (a1)--(a2)--(a3) (a2)--(a4);
  \draw[black!30,line width=25.6pt,line cap=round] (b1)--(b2)--(b3) (b2)--(b4)--(b5) (b4)--(b6);
  \draw[black!30,line width=25.6pt,line cap=round] (c1)--(c2)--(c3) (c2)--(c4);
  \end{scope}
  \draw (0,0) ellipse (2.5cm and 1.6cm);
  \tikzstyle{every node}=[draw,fill,circle,minimum size=3pt,inner sep=0pt]
  \tikzstyle{every label}=[draw=none,fill=none,rectangle,inner sep=1pt]
  \node[label=above:$x_1$] (x1) at (-3.2,1.7) {};
  \node[label=left:$y_1$] (y1) at (-3.9,0.1) {};
  \node[label=above:$x_2$] (x2) at (0.5,4.4) {};
  \node[label=left:$y_2$] (y2) at (-1.1,2.9) {};
  \node[label=right:$x_3$] (x3) at (3.9,0.1) {};
  \node[label=above:$y_3$] (y3) at (3,1.7) {};
  \node (x1') at (-1.9,0.55) {};
  \node (y1') at (-2,0.1) {};
  \node (x2') at (0,1.2) {};
  \node (y2') at (-0.55,1.2) {};
  \node (x3') at (2,0.1) {};
  \node (y3') at (1.9,0.5) {};
  \path (x1) edge[->,decorate,decoration={snake,segment length=5pt,amplitude=1pt,post length=4pt}] (y1);
  \path (x2) edge[->,decorate,decoration={snake,segment length=5pt,amplitude=1pt,post length=4pt}] (y2);
  \path (x3) edge[->,decorate,decoration={snake,segment length=5pt,amplitude=1pt,post length=4pt}] (y3);
  \draw[->] (x1) to[out=-45,in=120] (-2.85,1) to[out=-60,in=175] (x1');
  \draw[->] (x1') to[out=5,in=-100] (y2');
  \draw[->] (y2') to[out=85,in=-55] (y2);
  \draw[->] (x2) to[out=-75,in=50] (0.1,3.1) to[out=-120,in=95] (x2');
  \draw[->] (x2') to[out=-80,in=-175] (y3');
  \draw[->] (y3') to[out=10,in=-140] (2.6,0.8) to[out=50,in=-120] (y3);
  \draw[->] (x3) to[out=-170,in=0] (3.1,0.4) to[out=180,in=20] (x3');
  \draw[->] (x3') to[out=-160,in=30] (1.4,-0.7) to[out=-150,in=20] (-0.4,0) to[out=-160,in=-20] (y1');
  \draw[->] (y1') to[out=160,in=15] (-3.1,0.4) to[out=-165,in=-5] (y1);
  \node[draw=none,fill=none] at (-1,-1) {$Z$};
  \node[draw=none,fill=none,text depth=0pt] at (-4,-1.2) {$P$};
\end{scope}
\begin{scope}[yshift=-5cm]
  \coordinate (a1) at (-2,0.3);
  \coordinate (a2) at (-3.1,0.7);
  \coordinate (b1) at (-0.25,1.2);
  \coordinate (b2) at (-0.4,2.4);
  \coordinate (c1) at (2,0.3);
  \coordinate (c2) at (3.1,0.7);
  \begin{scope}
  \clip (0,0) ellipse (2.5cm and 1.6cm);
  \draw[line width=26.4pt,line cap=round] (a1)--(a2);
  \draw[line width=26.4pt,line cap=round] (b1)--(b2);
  \draw[line width=26.4pt,line cap=round] (c1)--(c2);
  \draw[black!30,line width=25.6pt,line cap=round] (a1)--(a2);
  \draw[black!30,line width=25.6pt,line cap=round] (b1)--(b2);
  \draw[black!30,line width=25.6pt,line cap=round] (c1)--(c2);
  \end{scope}
  \draw (0,0) ellipse (2.5cm and 1.6cm);
  \tikzstyle{every node}=[draw,fill,circle,minimum size=3pt,inner sep=0pt]
  \tikzstyle{every label}=[draw=none,fill=none,rectangle,inner sep=1pt]
  \node[label=above left:$\mu_Z(x_1)$] (x1) at (-2.7,1) {};
  \node[label={[label distance=1pt]left:$\!\nu_Z(y_1)$}] (y1) at (-3.1,0.2) {};
  \node[label=above right:$\mu_Z(x_2)$] (x2) at (0,2.2) {};
  \node[label=above left:$\nu_Z(y_2)$] (y2) at (-0.8,2.1) {};
  \node[label={[label distance=1pt]right:$\mu_Z(x_3)$}] (x3) at (3.1,0.1) {};
  \node[label=above right:$\nu_Z(y_3)$] (y3) at (2.8,0.9) {};
  \node (x1') at (-1.9,0.55) {};
  \node (y1') at (-2,0.1) {};
  \node (x2') at (0,1.2) {};
  \node (y2') at (-0.55,1.2) {};
  \node (x3') at (2,0.1) {};
  \node (y3') at (1.9,0.5) {};
  \path (x1) edge[->,decorate,decoration={snake,segment length=5pt,amplitude=1pt,post length=4pt}] (y1);
  \path (x2) edge[->,decorate,decoration={snake,segment length=5pt,amplitude=1pt,post length=4pt}] (y2);
  \path (x3) edge[->,decorate,decoration={snake,segment length=5pt,amplitude=1pt,post length=4pt}] (y3);
  \draw[->] (x1) to (x1');
  \draw[->] (x1') to[out=5,in=-100] (y2');
  \draw[->] (y2') to (y2);
  \draw[->] (x2) to (x2');
  \draw[->] (x2') to[out=-80,in=-175] (y3');
  \draw[->] (y3') to (y3);
  \draw[->] (x3) to (x3');
  \draw[->] (x3') to[out=-160,in=30] (1.4,-0.7) to[out=-150,in=20] (-0.4,0) to[out=-160,in=-20] (y1');
  \draw[->] (y1') to (y1);
  \node[draw=none,fill=none] at (-1,-1) {$Z$};
  \node[draw=none,fill=none,text depth=0pt] at (-4,-1.2) {$P_Z$};
\end{scope}
\begin{scope}[xshift=10.5cm]
  \coordinate (a1) at (-2,0.3);
  \coordinate (a2) at (-3.1,0.7);
  \coordinate (a3) at (-4,0.2);
  \coordinate (a4) at (-3.1,1.8);
  \coordinate (b1) at (-0.25,1.2);
  \coordinate (b2) at (-0.4,2.4);
  \coordinate (b3) at (-1.3,2.7);
  \coordinate (b4) at (0.5,3.3);
  \coordinate (b5) at (0.4,4.4);
  \coordinate (b6) at (1.4,3.1);
  \coordinate (c1) at (2,0.3);
  \coordinate (c2) at (3.1,0.7);
  \coordinate (c3) at (4,0.2);
  \coordinate (c4) at (3.1,1.8);
  \draw[line width=26.4pt,line cap=round] (a1)--(a2)--(a3) (a2)--(a4);
  \draw[line width=26.4pt,line cap=round] (b1)--(b2)--(b3) (b2)--(b4)--(b5) (b4)--(b6);
  \draw[line width=26.4pt,line cap=round] (c1)--(c2)--(c3) (c2)--(c4);
  \draw[white,line width=25.6pt,line cap=round] (a1)--(a2)--(a3) (a2)--(a4);
  \draw[white,line width=25.6pt,line cap=round] (b1)--(b2)--(b3) (b2)--(b4)--(b5) (b4)--(b6);
  \draw[white,line width=25.6pt,line cap=round] (c1)--(c2)--(c3) (c2)--(c4);
  \begin{scope}
  \clip (0,0) ellipse (2.5cm and 1.6cm);
  \draw[black!30,line width=25.6pt,line cap=round] (a1)--(a2)--(a3) (a2)--(a4);
  \draw[black!30,line width=25.6pt,line cap=round] (b1)--(b2)--(b3) (b2)--(b4)--(b5) (b4)--(b6);
  \draw[black!30,line width=25.6pt,line cap=round] (c1)--(c2)--(c3) (c2)--(c4);
  \end{scope}
  \draw (0,0) ellipse (2.5cm and 1.6cm);
  \tikzstyle{every node}=[draw,fill,circle,minimum size=3pt,inner sep=0pt]
  \tikzstyle{every label}=[draw=none,fill=none,rectangle,inner sep=1pt]
  \node[label=left:$x_1$] (x1) at (-3.8,0.3) {};
  \node[label={[label distance=1pt]below:$y_1$}] (y1) at (-0.7,0.1) {};
  \node[label=above:$x_2$] (x2) at (-1.4,2.5) {};
  \node[label=above:$y_2$] (y2) at (-3.2,1.7) {};
  \node[label=right:$x_3$] (x3) at (3,1.7) {};
  \node[label=above:$y_3$] (y3) at (1.4,3) {};
  \node (x3') at (2,0.1) {};
  \path (x1) edge[->,decorate,decoration={snake,segment length=5pt,amplitude=1pt,post length=4pt}] (y1);
  \path (x2) edge[->,decorate,decoration={snake,segment length=5pt,amplitude=1pt,post length=4pt}] (y2);
  \path (x3) edge[->,decorate,decoration={snake,segment length=5pt,amplitude=1pt,post length=4pt}] (y3);
  \draw[->] (x1) to[out=80,in=-135] (-3.5,0.8) to[out=45,in=-95] (y2);
  \draw[->] (x2) to[out=-20,in=-135] (0.2,2.8) to[out=35,in=180] (y3);
  \draw[->] (x3) to[out=-90,in=20] (x3');
  \draw[->] (x3') to[out=-160,in=30] (1.4,-0.7) to[out=-150,in=-5] (y1);
  \node[draw=none,fill=none] at (-1,-1) {$Z$};
  \node[draw=none,fill=none,text depth=0pt] at (-4,-1.2) {$P$};
\end{scope}
\begin{scope}[xshift=10.5cm,yshift=-5cm]
  \coordinate (a1) at (-2,0.3);
  \coordinate (a2) at (-3.1,0.7);
  \coordinate (b1) at (-0.25,1.2);
  \coordinate (b2) at (-0.4,2.4);
  \coordinate (c1) at (2,0.3);
  \coordinate (c2) at (3.1,0.7);
  \begin{scope}
  \clip (0,0) ellipse (2.5cm and 1.6cm);
  \draw[line width=26.4pt,line cap=round] (a1)--(a2);
  \draw[line width=26.4pt,line cap=round] (b1)--(b2);
  \draw[line width=26.4pt,line cap=round] (c1)--(c2);
  \draw[black!30,line width=25.6pt,line cap=round] (a1)--(a2);
  \draw[black!30,line width=25.6pt,line cap=round] (b1)--(b2);
  \draw[black!30,line width=25.6pt,line cap=round] (c1)--(c2);
  \end{scope}
  \draw (0,0) ellipse (2.5cm and 1.6cm);
  \tikzstyle{every node}=[draw,fill,circle,minimum size=3pt,inner sep=0pt]
  \tikzstyle{every label}=[draw=none,fill=none,rectangle,inner sep=1pt]
  \node[label={[label distance=1pt]left:$\mu_Z(x_1)$}] (x1) at (-3.1,0.2) {};
  \node[label={[label distance=1pt]below:$y_1$}] (y1) at (-0.7,0.1) {};
  \node[label=above left:$\mu_Z(x_2)$] (x2) at (-0.8,2.1) {};
  \node[label=above left:$\nu_Z(y_2)$] (y2) at (-2.7,1) {};
  \node[label={[label distance=1pt]right:$\mu_Z(x_3)$}] (x3) at (2.7,1) {};
  \node[label=above right:$\nu_Z(y_3)$] (y3) at (0,2.2) {};
  \node (x3') at (2,0.1) {};
  \path (x1) edge[->,decorate,decoration={snake,segment length=5pt,amplitude=1pt,post length=4pt}] (y1);
  \path (x2) edge[->,decorate,decoration={snake,segment length=5pt,amplitude=1pt,post length=4pt}] (y2);
  \path (x3) edge[->,decorate,decoration={snake,segment length=5pt,amplitude=1pt,post length=4pt}] (y3);
  \draw[->] (x1) to (y2);
  \draw[->] (x2) to (y3);
  \draw[->] (x3) to (x3');
  \draw[->] (x3') to[out=-160,in=30] (1.4,-0.7) to[out=-150,in=-5] (y1);
  \node[draw=none,fill=none] at (-1,-1) {$Z$};
  \node[draw=none,fill=none,text depth=0pt] at (-4,-1.2) {$P'_Z$};
\end{scope}
\draw[very thin] (current bounding box.south)--(current bounding box.north);
\end{tikzpicture}
\caption{Illustration of the proof of Lemma \ref{lem:signature}: an alternating cycle in $I$ and the corresponding alternating cycle in $\Inc(P_Z)$ (subcase 1.1 in the proof, left side of the figure) or $\Inc(P'_Z)$ (case 2 in the proof, right side of the figure)}
\label{fig:gadget}
\end{figure}

\emph{Subcase 1.2.} For every $i$, there is no $z\in Z$ with $x_i\leq_Pz\leq_Py_{i+1}$.
By Lemma \ref{lem:comp}, we have $A(\low{x_i},\low{y_{i+1}})>Z$ for every $i$.
Let $Z'_i$ be the lowest bag in $\calB_h(\low{x_i})\cap\calB_h(\low{y_{i+1}})$ with $Z'_i>Z$.
We have $\tau(\low{x_1})=\cdots=\tau(\low{x_k})$ and $\tau(\low{y_1})=\cdots=\tau(\low{y_k})$ (cf.\ \ref{sig:tau}), which determines a color $c$ such that $c=p_Z(\low{x_i},\low{y_i})=p_Z(\low{x_i},\low{y_{i+1}})$ for all $i$ (in particular, $c\neq\bot$).
By Lemma \ref{lem:tau}, we have $\phi(Z'_i)=c$ for every $i$.
Since $A(\low{x_i},\low{y_{i+1}})>Z$ for every $i$, we have $A(\low{x_{i_0}},\low{y_{i_0}})=Z$ for some $i_0$.
This and $p_Z(\low{x_{i_0}},\low{y_{i_0}})\neq\bot$ imply $t_Z(\low{x_{i_0}},\low{y_{i_0}})\in\{1,2\}$.
For every $i$, we have $t_Z(\low{x_{i+1}},\low{y_{i+1}})=t_Z(\low{x_{i_0}},\low{y_{i_0}})$ (cf.\ \ref{sig:t}), which implies $Z'_{i+1}\prec Z'_i$ when $t_Z(\low{x_{i_0}},\low{y_{i_0}})=1$ and $Z'_{i+1}\succ Z'_i$ when $t_Z(\low{x_{i_0}},\low{y_{i_0}})=2$.
This is a contradiction in either case, as the subscripts are assumed to go cyclically over $\{1,\ldots,k\}$ and $\prec$ is a strict order.

\emph{Case 2.} $\pi_Z(x_i,y_i)=2$ for every $i$.
We have $(\mu_Z(x_i),\nu_Z(y_i))\in\Inc(P'_Z)$ and $\mu_Z(x_i)\leq_{\smash[b]{P'_Z}}\nu_Z(y_{i+1})$ for every $i$, by Lemma \ref{lem:aux} \ref{item:aux2} and \ref{item:aux4}.
Hence $\{(\mu_Z(x_i),\nu_Z(y_i))\colon 1\leq i\leq k\}$ is an alternating cycle in $\Inc(P'_Z)$ (see the right side of Figure~\ref{fig:gadget}).
However, it is monochromatic in $\sigma'_Z$ (cf.\ \ref{sig:sigma'}).
This contradicts the assumption that $\sigma'_Z$ is a valid coloring of $\Inc(P'_Z)$.
\end{proof}

\subsection{Summary}

Let $\calG$ be a proper topologically closed class of graphs, and let $h\in\setN$.
Let $P$ be a poset of height at most $h$ with cover graph $G\in\calG$.
By Theorem \ref{thm:grohe-marx}, we have $\calG\subseteq\calT_s(\calA_t(\calL_f\cup\calD_d))$, where $s$, $t$, $f$ and $d$ depend only on $\calG$.
By Corollary \ref{cor:gadget}, the gadget extensions $P_Z$ and $P'_Z$ for all bags $Z$ of the tree decomposition of $G$ witnessing $G\in\calT_s(\calA_t(\calL_f\cup\calD_d))$ have dimension bounded in terms of $h$ and $\calG$.
Therefore, by Lemma \ref{lem:main}, the poset $P$ has dimension bounded in terms of $h$ and $\calG$.
This completes the proof of Theorem \ref{thm:main}.

Theorem \ref{thm:tree-width} plays an important role in the proof of Theorem \ref{thm:main} above.
However, we can use Lemma \ref{lem:main} to reprove Theorem \ref{thm:tree-width} as well.
Fix $k\in\setN$.
If $\calV_{k+1}$ denotes the class of graphs with at most $k+1$ vertices, then $\calT_k(\calV_{k+1})$ is the class of graphs of tree-width at most $k$.
The graphs in $\calE_k(\calV_{k+1})$ have bounded size, so posets with cover graphs in $\calE_k(\calV_{k+1})$ have bounded dimension.
By Lemma \ref{lem:main}, for any $h,k\in\setN$, posets of height at most $h$ with cover graphs in $\calT_k(\calV_{k+1})$ have bounded dimension, which is exactly the statement of Theorem \ref{thm:tree-width}.

\section{Proof of Lemma \ref{lem:diameter}}
\label{sec:diameter}

\begin{proof}[Proof of Lemma \ref{lem:diameter}]
Let $P$ be a poset of height at most $h$ with cover graph $G\in\calG$.
If $P$ is the disjoint union of posets $P_1,\ldots,P_k$, where $k\geq 2$, then $\dim(P)=\max\{\dim(P_1),\ldots,\dim(P_k),2\}$ \cite{Hir55}.
Therefore, since our goal is to bound $\dim(P)$, we can assume without loss of generality that $G$ is connected.
Let $v$ be an arbitrary minimal vertex in $P$.
Let $A_0,\ldots,A_n$ be the distance sets of the comparability graph of $P$ in the breadth-first search starting at $v$.
That is, $A_0=\{v\}$, $A_i=\{x\in P\setminus(A_0\cup\cdots\cup A_{i-1})\colon$there is $y\in A_{i-1}$ comparable to $x$ in $P\}$ for $i\geq 1$, and $n$ is greatest such that $A_n\neq\emptyset$.
The following properties are straightforward:
\begin{enumerate}
\item\label{item:diameter1} if $x\in A_i$, $y\in A_{i-1}$, $x$ and $y$ are comparable, and $i$ is odd, then $x\geq_Py$,
\item if $x\in A_i$, $y\in A_{i-1}$, $x$ and $y$ are comparable, and $i$ is even, then $x\leq_Py$,
\item\label{item:diameter3} if $x\in A_i$, $y\in A_j$, and $|i-j|\geq 2$, then $x$ and $y$ are incomparable,
\item if $x\in A_i$ and $i$ is odd, then there is $y\in A_{i-1}$ such that $x\geq_Py$,
\item if $x\in A_i$ and $i$ is even, then $x=v$ or there is $y\in A_{i-1}$ such that $x\leq_Py$.
\end{enumerate}

Let $P_1=P[A_0\cup A_1]$.
It follows that if $G_1$ denotes the cover graph of $P_1$, then $G_1=G[A_0\cup A_1]$ and thus $G_1\in\calG$.
For $2\leq i\leq n$, let $P_i$ be the poset obtained from $P[A_0\cup\cdots\cup A_i]$ by contracting $A_0\cup\cdots\cup A_{i-2}$ to a single vertex.
That is, the whole set $A_0\cup\cdots\cup A_{i-2}$ is replaced by a single vertex $v'$ such that
\begin{itemize}
\item $v'\leq_{P_i}x$ for every $x\in A_{i-1}$ if $i$ is even,
\item $v'\geq_{P_i}x$ for every $x\in A_{i-1}$ if $i$ is odd,
\item $v'$ is incomparable with every vertex in $A_i$.
\end{itemize}
It follows that the cover graph $G_i$ of $P_i$ is obtained from $G[A_0\cup\cdots\cup A_i]$ by contracting $A_0\cup\cdots\cup A_{i-2}$ to the single vertex $v'$.
Since $G[A_0\cup\cdots\cup A_{i-2}]$ is connected, $G_i$ is a minor of $G$, so $G_i\in\calG$.
For $1\leq i\leq n$, the height of $P_i$ is at most $h$, and hence every comparability in $P_i$ is witnessed by a path in $G_i$ of length at most $h-1$.
Therefore, the distance from $v'$ (or from $v$ if $i=1$) of every vertex in $G_i$ is at most $2h-2$, so $G_i\in\calG_{2h-2}$.
This and the assumption of the lemma imply that $P_i$ has bounded dimension.

Let $d$ be a common bound on the dimensions of all the $P_i$.
We are ready to construct a valid coloring of $\Inc(P)$ with a bounded number of colors.
First, observe that by the property \ref{item:diameter3} above, neither of the following two sets of incomparable pairs contains an alternating cycle:
\begin{align*}
I_1&=\{(x,y)\in\Inc(P)\colon\text{$x\in A_i$ and $y\in A_j$ for some $i$ and $j$ with $i-j\geq 2$}\},\\
I_2&=\{(x,y)\in\Inc(P)\colon\text{$x\in A_i$ and $y\in A_j$ for some $i$ and $j$ with $j-i\geq 2$}\}.
\end{align*}
Therefore, we can color $I_1\cup I_2$ with just two colors.
Every remaining incomparable pair $(x,y)\in\Inc(P)$ satisfies $x,y\in A_{i-1}\cup A_i$ for some $i$ and therefore it is also in $\Inc(P_i)$ for some $i$.
Fix a valid coloring of every $\Inc(P_i)$ with colors $1,\ldots,d$ if $i$ is even or $d+1,\ldots,2d$ if $i$ is odd.
We color the incomparable pairs in $\Inc(P)$ so that if $x,y\in A_{i-1}\cup A_i$ and $x\in A_i$ or $y\in A_i$, then $(x,y)$ is assigned the color that it has in $\Inc(P_i)$.
It is clear that this rule assigns a unique color to each incomparable pair in $\Inc(P)$.
Suppose that this coloring of $\Inc(P)$ yields a monochromatic alternating cycle $C=\{(x_j,y_j)\colon 1\leq j\leq k\}$.
Every incomparable pair $(x_j,y_j)\in C$ belongs to $A_{i-1}\cup A_i$ for some $i$, where the parity of $i$ is determined by the color of $C$ and thus is common for all $(x_j,y_j)\in C$.
This and the properties \ref{item:diameter1}--\ref{item:diameter3} imply that the index $i$ itself is common for all $(x_j,y_j)\in C$, which contradicts the assumption that we have chosen a valid coloring of $\Inc(P_i)$.
This shows that the coloring of $\Inc(P)$ that we have constructed is valid.
\end{proof}

\linespread{1.054}
\section*{Acknowledgment}

I am grateful to Gwenaël Joret and Tom Trotter for helpful discussions and comments.


\begin{thebibliography}{10}

\bibitem{Bak94}
\href{https://doi.org/10.1145/174644.174650}{Brenda~S. Baker, Approximation algorithms for NP-complete problems on planar graphs, \emph{J. ACM} 41~(1), 153--180, 1994}.

\bibitem{BKY16}
\href{https://doi.org/10.1007/s11083-015-9359-7}{Csaba Biró, Mitchel~T. Keller, and Stephen~J. Young, Posets with cover graph of pathwidth two have bounded dimension, \emph{Order} 33~(2), 195--212, 2016}.

\bibitem{DM41}
\href{https://doi.org/10.1007/s004530010020}{Ben Dushnik and Edwin~W. Miller, Partially ordered sets, \emph{Am. J. Math.} 63~(3), 600--610, 1941}.

\bibitem{Epp00}
\href{https://doi.org/10.1007/s004530010020}{David Eppstein, Diameter and treewidth in minor-closed graph families, \emph{Algorithmica} 27~(3\nobreakdash--4), 275--291, 2000}.

\bibitem{FKT13}
\href{https://doi.org/10.1007/s11083-011-9222-4}{Stefan Felsner, Tomasz Krawczyk, and William~T. Trotter, On-line dimension for posets excluding two long incomparable chains, \emph{Order} 30~(1), 1--12, 2013}.

\bibitem{FLT10}
\href{https://doi.org/10.1016/j.disc.2009.11.005}{Stefan Felsner, Ching~Man Li, and William~T. Trotter, Adjacency posets of planar graphs, \emph{Discrete Math.} 310~(5), 1097--1104, 2010}.

\bibitem{FK86}
\href{https://doi.org/10.1007/BF00403406}{Zoltán Füredi and Jeff Kahn, On the dimensions of ordered sets of bounded degree, \emph{Order} 3~(1), 15--20, 1986}.

\bibitem{Gro03}
\href{https://doi.org/10.1007/s00493-003-0037-9}{Martin Grohe, Local tree-width, excluded minors, and approximation algorithms, \emph{Combinatorica} 23~(4), 613--632, 2003}.

\bibitem{GM15}
\href{https://doi.org/10.1137/120892234}{Martin Grohe and Dániel Marx, Structure theorem and isomorphism test for graphs with excluded topological subgraphs, \emph{SIAM J. Comput.} 44~(1), 114--159, 2015}.

\bibitem{GK-personal}
Grzegorz Gutowski and Tomasz Krawczyk, personal communication.

\bibitem{Hir55}
\href{http://hdl.handle.net/2297/33759}{Toshio Hiraguchi, On the dimension of orders, \emph{Sci. Rep. Kanazawa Univ.} 4~(1), 1--20, 1955}.

\bibitem{JMM+16}
\href{https://doi.org/10.1007/s00493-014-3081-8}{Gwenaël Joret, Piotr Micek, Kevin~G. Milans, William~T. Trotter, Bartosz Walczak, and Ruidong Wang, Tree-width and dimension, \emph{Combinatorica} 36~(4), 431--450, 2016}.

\bibitem{JMT+17}
\href{https://doi.org/10.1007/s11083-016-9395-y}{Gwenaël Joret, Piotr Micek, William~T. Trotter, Ruidong Wang, and Veit Wiechert, On the dimension of posets with cover graphs of treewidth $2$, \emph{Order} 34~(2), 185--234, 2017}.

\bibitem{JMW18}
\href{https://doi.org/10.1007/s00493-017-3638-4}{Gwenaël Joret, Piotr Micek, and Veit Wiechert, Sparsity and dimension, \emph{Combinatorica} 38~(5), 1129--1148, 2018}.

\bibitem{Kel81}
\href{https://doi.org/10.1016/0012-365X(81)90203-X}{David Kelly, On the dimension of partially ordered sets, \emph{Discrete Math.} 35~(1\nobreakdash--3), 135--156, 1981}.

\bibitem{KMT84}
\href{https://doi.org/10.1007/BF00396274}{Henry~A. Kierstead, George~F. McNulty, and William~T. Trotter, A theory of recursive dimension for ordered sets, \emph{Order} 1~(1), 67--82, 1984}.

\bibitem{Kur30}
\href{https://eudml.org/doc/212352}{Kazimierz Kuratowski, Sur le problème des courbes gauches en topologie, \emph{Fundam. Math.} 15, 271--283, 1930}.

\bibitem{MW17}
\href{https://doi.org/10.1002/jgt.22127}{Piotr Micek and Veit Wiechert, Topological minors of cover graphs and dimension, \emph{J. Graph Theory} 86~(3), 295--314, 2017}.

\bibitem{RS03}
\href{https://doi.org/10.1016/S0095-8956(03)00042-X}{Neil Robertson and Paul~D. Seymour, Graph minors. XVI. Excluding a non-planar graph, \emph{J. Comb. Theory Ser.~B} 89~(1), 43--76, 2003}.

\bibitem{RS04}
\href{https://doi.org/10.1016/j.jctb.2004.08.001}{Neil Robertson and Paul~D. Seymour, Graph minors. XX. Wagner's conjecture, \emph{J. Comb. Theory Ser.~B} 92~(2), 325--357, 2004}.

\bibitem{RT-unpub}
Vojtěch Rödl and William~T. Trotter, unpublished result, see Theorem 3.1 in Chapter 7 of \cite{Tro-book}.

\bibitem{ST14}
\href{https://doi.org/10.1016/j.ejc.2013.06.017}{Noah Streib and William~T. Trotter, Dimension and height for posets with planar cover graphs, \emph{Eur. J. Comb.} 35, 474--489, 2014}.

\bibitem{Tro78}
\href{https://doi.org/10.1007/bfb0070411}{William~T. Trotter, Order preserving embeddings of aographs, in: Yousef Alavi and Don~R. Lick (eds.), \emph{Theory and Applications of Graphs}, vol.~642 of \emph{Lect. Notes Math.}, pp.~572--579, Springer, Heidelberg, 1978}.

\bibitem{Tro-book}
William~T. Trotter, \emph{Combinatorics and Partially Ordered Sets:\ Dimension Theory}, Johns Hopkins University Press, Baltimore, 1992.

\bibitem{TM77}
\href{https://doi.org/10.1016/0095-8956(77)90048-X}{William~T. Trotter and John~I. Moore, The dimension of planar posets, \emph{J. Comb. Theory Ser.~B} 22~(1), 54--67, 1977}.

\bibitem{TWW18}
\href{https://doi.org/10.1017/9781316650295.012}{William~T. Trotter, Bartosz Walczak, and Ruidong Wang, Dimension and cut vertices:\ an application of Ramsey theory, in: Steve Butler, Joshua Cooper, and Glenn Hurlbert (eds.), \emph{Connections in Discrete Mathematics:\ A Celebration of the Work of Ron Graham}, pp.~187--199, Cambridge University Press, Cambridge,~2018}.

\bibitem{Wag37}
\href{https://doi.org/10.1007/BF01594196}{Klaus Wagner, Über eine Eigenschaft der ebenen Komplexe, \emph{Math. Ann.} 114~(1), 570--590, 1937}.

\bibitem{Yan82}
\href{https://doi.org/10.1137/0603036}{Mihalis Yannakakis, The complexity of the partial order dimension problem, \emph{SIAM J. Algebr. Discrete Methods} 3~(3), 351--358, 1982}.

\end{thebibliography}
\end{document}